\newcommand{\C}{\mathbb {C}}
\newcommand{\Addresses}{{
  \bigskip
  \footnotesize

  C.~Cabrera  \textsc{Unidad Cuernavaca del Instituto de Matem\'aticas. UNAM, M\'exico}\par\nopagebreak
  \textit{E-mail:}  \texttt{carloscabrerao@im.unam.mx}

  \medskip

  P.~Makienko  \textsc{Unidad Cuernavaca del Instituto de Matem\'aticas. UNAM, M\'exico}\par\nopagebreak
  \textit{E-mail:} \texttt{makienko@im.unam.mx}}}
\newtheorem{theorem}{Theorem}
\newtheorem{lemma}[theorem]{Lemma}
\newtheorem{corollary}[theorem]{Corollary}
\newtheorem{proposition}[theorem]{Proposition}\newtheorem*{conjecture}{Conjecture}
\theoremstyle{definition}
\newtheorem*{definition}{Definition}
\newtheorem{example}{Example}
\title{Amenability and measure of maximal entropy for semigroups of rational maps.}
\author{Carlos Cabrera and Peter Makienko}
\begin{document}

\maketitle
\footnotetext{This work was partially supported by PAPIIT IN106719 and CONACYT CB15/255633.

MSC2010: 37F10, 43A07,37F44}
 \begin{abstract}

In this article we discuss relations between algebraic and dynamical properties of non-cyclic semigroups of rational maps. 

\end{abstract}

\section{Introduction}

In a series of works J. Ritt (see \cite{Rittper} and \cite{RittEquiv}) studied non-trivial relations and functional equations on the semigroup of rational maps. Specifically, Ritt was interested in the solution of equations of the following type $$A\circ B=C\circ D$$ where $A,B,C$ and $D$ are rational maps.  Ritt gave solutions to these equations for polynomials but there were obstacles in the case of rational maps.  Ritt's theory  for rational maps is still under investigation and presents many open questions. The paper  \cite{PakovichComRat} contains a short survey on the modern treatment in this area as well of an ample list of references. In particular, the references provided whitin \cite{PakovichComRat} also include a series of Pakovich's works on recent developments of  Ritt's theorems for rational maps.   

In the paper \cite{GhiTucZieve}, D. Ghioca, T. J. Tucker and M. E. Zieve proved the following interesting result:

\textit{
If for polynomials $P$ and $Q$ there exists a point $z_0\in \overline{\C}$ such that the intersection of the forward orbits of $z_0$, with respect to $P$ and $Q$, is an infinite set then 
$P$ and $Q$ share a common iterate. That is, there are natural numbers $n,m$
 such that $P^n=Q^m.$ }
 
 In other words, the dynamical intersection property implies an algebraic equation on $P$ and $Q.$
 
 Our first theorem generalizes the theorem above.  Recall that a polynomial (rational map) $Q$ is \textit{exceptional} if either $deg(Q)=1$ or  $Q$ is affinely (M\"obius) conjugated either to $z^n$ (with $n\in \mathbb{Z}$) or to a Chebyshev polynomial (or a Latt\`es map). We say that a family of polynomials (rational maps) $\mathcal{F}$ is \textit{non-exceptional} if $\mathcal{F}$ contains a non-exceptional polynomial (rational map).  Given a family of maps $\mathcal{F}$ we denote by $S(\mathcal{F})$ the semigroup generated by the family $\mathcal{F}.$
 
 \begin{theorem}\label{th.polynomialsemb}
 Given a finite non-exceptional family $\mathcal{F}$ of non-injective  polynomials. Then,  the following conditions are equivalent. 
 
 \begin{enumerate}
  \item For every pair $P$, $Q \in \mathcal{F}$ there exists a point $z_0\in \overline{\C}$ such that $$\#(\{\bigcup P^j(z_0) \cap \bigcup Q^k(z_0)\})=\infty.$$
  
    \item For every pair $P,Q\in S(\mathcal{F})$ there are integers $m,n$ such that $P^m=Q^n.$
    \item The semigroup $S(\mathcal{F})$ is amenable with $RIM(S(\mathcal{F}))\subset LIM(S(\mathcal{F})).$ Where $RIM(S(\mathcal{F}))$ and $LIM(S(\mathcal{F}))$ stands for the sets of right and left invariant means respectively.
    \item $S(\mathcal{F})$ is embeddable into a virtually cyclic group.
 \end{enumerate}

 \end{theorem}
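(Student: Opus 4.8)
The plan is to prove the four conditions equivalent by establishing the cycle $(1)\Rightarrow(2)\Rightarrow(4)\Rightarrow(3)\Rightarrow(1)$, keeping in mind that the short-cut $(2)\Rightarrow(1)$ is essentially immediate and serves as a sanity check: if generators $P,Q\in\mathcal{F}$ satisfy $P^m=Q^n=:R$, then for any non-preperiodic point $z_0$ the infinite set $\{R^k(z_0):k\geq 0\}$ lies in both $\bigcup_j P^j(z_0)$ and $\bigcup_k Q^k(z_0)$, so their intersection is infinite; such $z_0$ exist because a non-injective polynomial has only countably many preperiodic points. For $(1)\Rightarrow(2)$ I would first invoke the Ghioca--Tucker--Zieve theorem quoted above: condition (1) gives infinite orbit intersection for each pair of generators, hence each pair $P,Q\in\mathcal{F}$ already shares a common iterate $P^{m}=Q^{n}$. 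The real work is to \emph{propagate} this from generators to all of $S(\mathcal{F})$.

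To carry out the propagation, write $R=P^m=Q^n$; then both $P$ and $Q$ commute with $R$ and so lie in the centralizer of $R$ in the semigroup of rational maps. For a non-exceptional map this centralizer is classically very small — it is virtually cyclic and every element of it shares an iterate with $R$ (Julia/Ritt-type rigidity of commuting non-exceptional maps). Since $\mathcal{F}$ is finite and non-exceptional, I would fix one non-exceptional generator $f_1$, use centralizer rigidity ($\mathrm{Cent}(f_1^{a})=\mathrm{Cent}(f_1)$ for non-exceptional $f_1$) to place every other generator in $\mathrm{Cent}(f_1)$, and conclude that all generators, hence all words, lie in this single virtually cyclic centralizer; pairwise shared iterates throughout $S(\mathcal{F})$ follow. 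The implication $(2)\Rightarrow(4)$ is then structural: relations $f_i^{a_i}=f_j^{b_j}$ force every generator into $\tfrac{1}{N}\mathbb{Z}\cdot g$ for a common primitive map $g$ and suitable $N$, while the finite group of Möbius symmetries commuting with the iterates of $g$ accounts for the word ``virtually''; thus the enveloping group of $S(\mathcal{F})$ is cyclic-by-finite.

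For $(4)\Rightarrow(3)$ I would use that virtually cyclic groups are amenable and of polynomial growth, so the finitely generated subsemigroup $S(\mathcal{F})$ has subexponential growth and is therefore (left) amenable; the virtual abelianness of the ambient group is what forces right-invariant means to be left-invariant, giving $RIM(S(\mathcal{F}))\subset LIM(S(\mathcal{F}))$. The main obstacle is the remaining implication $(3)\Rightarrow(1)$, where the dynamical/algebraic conclusion must be extracted from the purely analytic hypothesis of amenability. I would argue by contraposition: if (1) fails for some pair $P,Q\in\mathcal{F}$, then by the contrapositive of Ghioca--Tucker--Zieve, $P$ and $Q$ share no common iterate, and the crux is to show that for non-exceptional polynomials \emph{failure to share an iterate forces $\langle P,Q\rangle$ to contain a free sub-semigroup of rank two}, which is non-amenable and destroys every invariant mean.

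This is exactly where the measure of maximal entropy enters, as the paper's title anticipates. A nontrivial relation between two distinct words in $P$ and $Q$ would impose an identity among pullbacks of the maximal measures $\mu_P,\mu_Q$; by the rigidity of the maximal measure for polynomials — it is the equilibrium (harmonic) measure of the Julia set, and two non-exceptional polynomials with equal maximal measure, equivalently equal Julia set, must share an iterate (Beardon/Atela--Hu-type classification) — such an identity is impossible once $P,Q$ have no common iterate. Hence $\langle P,Q\rangle$ is free, contradicting amenability, and the hypothesis $RIM\subset LIM$ is what rules out intermediate non-commutative amenable configurations and pins the structure down to the cyclic case. I expect the precise verification that non-shared iterates yield freeness, via this maximal-measure bookkeeping, to be the technical heart and principal difficulty of the whole argument.
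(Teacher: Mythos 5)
The decisive gap is in your implication $(3)\Rightarrow(1)$. Your mechanism is ``no common iterate $\Rightarrow$ $\langle P,Q\rangle$ contains a free rank-$2$ subsemigroup $\Rightarrow$ non-amenable'', supported by the claim that two non-exceptional polynomials with the same measure of maximal entropy must share an iterate. Both halves fail. Equality of the maximal measures is equivalent (Theorem \ref{th.Levin-Przyticki}) only to some iterates satisfying the \emph{Levin relations}, not to a common iterate: take $P=z^2+1$ and $Q=-z^2-1$. These are non-exceptional, satisfy $P\circ Q=P^2$ and $Q\circ P=Q^2$, hence have the same maximal measure and Julia set, yet $Q^n=-P^n$ for all $n$, so no iterates coincide. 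Moreover $\langle P,Q\rangle=\{P^n\}\cup\{Q^n\}$ has linear growth, so it contains no free subsemigroup; the true obstruction for such a pair is that this semigroup is right amenable but \emph{not left amenable} (Theorem \ref{th.Levinrel}), which is exactly what the hypothesis $RIM\subset LIM$ is designed to exploit. The general principle ``no rank-$2$ free subsemigroup forces shared iterates'' is precisely the conjecture the paper leaves open (settled only later in \cite{TitsTucker}), so it cannot be invoked. The paper's actual route is: right amenability gives a common maximal measure via the Lyubich representation (Theorem \ref{nonatomicLyubich}), Levin--Przytycki gives Levin relations between iterates, and then $RIM\subset LIM$ is pushed down to the two-generator Levin subsemigroup via Granirer's restriction theorem, forcing that subsemigroup to be left amenable and hence cyclic (Theorem \ref{th.LIMRIM}); that is where the common iterate comes from.

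Two further problems. First, in $(1)\Rightarrow(2)$ you assert $\mathrm{Cent}(f_1^{a})=\mathrm{Cent}(f_1)$; this is false, and the paper's Example \ref{example} ($P=z^5+z^2$, where $\lambda P$ commutes with $P^2$ but not with $P$) is a counterexample. The propagation is repairable by placing all generators in $\mathrm{Cent}(f_1^{N})$ for a common multiple $N$ and applying Ritt--Eremenko (Theorem \ref{th.RittEremenko}) there, which is essentially what the paper does inside Theorem \ref{th.Intersection}, but as written the step is wrong. Second, in $(4)\Rightarrow(3)$ the amenability claim needs the F{\o}lner criterion for \emph{cancellative} semigroups (a subsemigroup of an amenable group need not be amenable: solvable groups such as $BS(1,2)$ contain free rank-$2$ subsemigroups), and the assertion that virtual abelianness of the ambient group forces $RIM\subset LIM$ on the subsemigroup is given no argument at all; this is the delicate point that the paper handles with the explicit semidirect-product structure $E(P)\cong G(P)\rtimes\langle T\rangle$ and the averaging over $Aut(T^s)$ in Theorem \ref{th.RLamenable}. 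Relatedly, your $(2)\Rightarrow(4)$ never establishes left cancellativity or Ore's condition, which is how embeddability into a group is obtained in the paper before the virtually cyclic structure can even be discussed.
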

Recall that a virtually cyclic group is a group containing a cyclic subgroup of finite index.  As shown in Example \ref{example} below, a semigroup of polynomials may be embeddable into a virtually cyclic group  but not into a metacyclic group, those are virtually cyclic groups for which the cyclic subgroup is  normal.

 For families of rational maps we have the following. 
 
 \begin{theorem}\label{th.mainA}
  Let $\mathcal{F}$ be a non-exceptional collection of non-injective rational maps containing an element not M\"obius conjugated to a polynomial. Then the 
  following conditions are equivalent.
  \begin{enumerate}
   \item The semigroup $S(\mathcal{F})$  is right amenable and for every pair $P,Q\in S(\mathcal{F})$ there is a point $z_0\in \overline{\C}$ such that $$\#\{\bigcup P^j(z_0) \cap \bigcup Q^k(z_0)\}=\infty.$$
   \item For every pair $P,Q\in S(\mathcal{F})$ there exist natural numbers $m,n$ such that $P^m=Q^n.$
   \item The semigroup $S(\mathcal{F})$ is right amenable and embeddable into a group.
   \item The semigroup $S(\mathcal{F})$ is $\rho$-right-amenable, where $\rho$ denotes the Lyubich representation (definitions below), and for every pair $P,Q\in \mathcal{F}$ there is a point $z_0\in \overline{\C}$ such that $$\#(\{\bigcup P^j(z_0) \cap \bigcup Q^k(z_0)\})=\infty.$$
   
  \end{enumerate}

 \end{theorem}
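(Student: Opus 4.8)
The plan is to establish the equivalences in Theorem~\ref{th.mainA} by proving a cycle of implications, leveraging the dynamical–algebraic dictionary together with the structure theory of amenable semigroups. The natural cycle is $(2)\Rightarrow(3)\Rightarrow(1)\Rightarrow(2)$, treating the representation-theoretic condition $(4)$ as an equivalent reformulation of $(1)$ that can be inserted into the cycle once the Lyubich representation $\rho$ is in hand. Throughout, the hypothesis that $\mathcal{F}$ is non-exceptional and contains a genuinely rational (non-polynomial) element is what forces rigidity: it rules out the abelian-like behaviour of power maps, Chebyshev maps, and Latt\`es examples, so that any algebraic relation between generators must come from a common iterate rather than from accidental symmetry of the Julia set.

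First I would prove $(2)\Rightarrow(3)$. Assuming that every pair $P,Q\in S(\mathcal{F})$ shares a common iterate $P^m=Q^n$, one shows that $S(\mathcal{F})$ satisfies the Ore condition (any two elements have a common right multiple), since $P^m=Q^n$ is itself a common value; the Ore condition guarantees that $S(\mathcal{F})$ embeds into its group of right fractions, giving embeddability into a group. Right amenability should then follow because the common-iterate relation forces $S(\mathcal{F})$ to be, up to the embedding, a subsemigroup of an abelian-by-finite or virtually cyclic group --- the commuting-iterate structure makes the generated group have polynomial growth, hence amenable, and a subsemigroup of an amenable group that satisfies Ore is right amenable. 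For $(3)\Rightarrow(1)$, the dynamical intersection property is extracted from the group embedding: once $S(\mathcal{F})$ sits inside a group, the relation $P^m=Q^n$ (recovered from the group structure via the rigidity of non-exceptional rational maps, cf. the Ghioca--Tucker--Zieve circle of ideas) produces a common periodic point or a shared grand orbit whose forward $P$- and $Q$-orbits meet in an infinite set.

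The implication I expect to carry the real content is $(1)\Rightarrow(2)$: from right amenability plus the infinite-intersection hypothesis, deduce the common-iterate algebraic relation. The strategy is to use right amenability to produce a right-invariant mean, which by the Lyubich representation $\rho$ on the space of measures (or on an appropriate function space associated with the measure of maximal entropy) yields an invariant object --- concretely, a common measure of maximal entropy or a common equilibrium state for the generators. The key analytic input is that two non-exceptional rational maps sharing a measure of maximal entropy must share an iterate; this is where the non-exceptionality and non-polynomiality hypotheses are essential, since for exceptional maps distinct maps can share the maximal measure (Lebesgue on the circle, etc.). The infinite-intersection hypothesis is then used to pin down that the invariant mean is supported on the relevant common structure, upgrading the measure-theoretic coincidence to the algebraic identity $P^m=Q^n$.

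The main obstacle, and the step I would spend the most care on, is the passage from an abstract right-invariant mean to a concrete common invariant measure of maximal entropy through the Lyubich representation --- in other words, showing $(4)\Leftrightarrow(1)$ and using $\rho$-right-amenability as the bridge. The difficulty is that right amenability is an abstract property of the semigroup, while the conclusion is about coincidence of canonical dynamical measures; one must verify that $\rho$ is sufficiently faithful and that the fixed-point/invariant-mean machinery transports correctly to the cone of measures, so that $\rho$-right-amenability genuinely encodes the existence of a shared measure of maximal entropy. I would therefore isolate a lemma asserting that, for the Lyubich representation, right-amenability is equivalent to the existence of a $\rho$-invariant mean which integrates to a common maximal-entropy measure, and then invoke the rigidity theorem that such a shared measure forces a common iterate; completing this lemma rigorously is the heart of the proof and the point where the non-exceptional, non-polynomial hypotheses do the decisive work.
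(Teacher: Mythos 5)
Your overall architecture (a cycle through the conditions, with the Lyubich representation converting amenability into a common measure of maximal entropy and Levin--Przyticki-type rigidity converting that into algebra) matches the paper's, but two of your steps rest on claims that are false or unproved, and these are exactly the points where the paper has to work.

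First, in $(1)\Rightarrow(2)$ you take as ``the key analytic input'' that two non-exceptional rational maps sharing a measure of maximal entropy must share an iterate. This is false: by Theorem \ref{th.Levin-Przyticki} a shared maximal-entropy measure only yields iterates $P^m$, $Q^n$ satisfying the Levin relations $P^m\circ Q^n=P^{2m}$, $Q^n\circ P^m=Q^{2n}$, and these do not force $P^m=Q^n$ (take $Q=\gamma\circ R$ with $\gamma\in Deck(R)$ nontrivial: the Levin relations hold and the maximal-entropy measures coincide, yet $Q^n=\gamma\circ R^n\neq R^n$ for all $n$). If your rigidity claim were true, the dynamical intersection hypothesis in (1) and (4) would be redundant. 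The actual role of $DIP$ is to upgrade the Levin relations to $P^m=Q^n$: from $P^{mk_i}(z_0)=Q^{nl_i}(z_0)$ one uses the Levin relations to show either that $P^m$ and $Q^n$ agree on an infinite set (when $k_i=l_i$) or that $z_0$ has a finite orbit (when $k_i\neq l_i$), which contradicts $DIP$. Your description of the intersection hypothesis as ``pinning down the support of the mean'' does not supply this argument, and without it the implication fails.

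Second, in $(2)\Rightarrow(3)$ you invoke Ore's theorem using only the common-multiple condition, but Ore's theorem (Theorem \ref{th.Ore}) requires the semigroup to be \emph{cancellative}, and semigroups of rational maps are only automatically right cancellative; left cancellation can genuinely fail, by Ritt's Proposition \ref{pr.Ritt} (the equation $F\circ A=F\circ B$ with $A\neq B$). The paper must prove left cancellativity from (2): if $AX=AY$ then $P=XA$ and $Q=YA$ satisfy the Levin relations, a common iterate forces $P=Q$ by a degree count and right cancellation, hence $X=Y$. Moreover, your derivation of right amenability from (2) via ``polynomial growth, hence virtually cyclic'' is unjustified for rational (non-polynomial) semigroups and for infinite families $\mathcal{F}$ --- the paper only asserts virtual cyclicity in the finitely generated case --- and instead proves amenability in Theorem \ref{th.cmiterate} by writing $S$ as a directed union of centralizer semigroups $S_n$, applying Day's directed-union theorem, Theorem \ref{th.RittEremenko}, and Pakovich's finiteness theorem for the centralizer $C(f)$. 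Both of these gaps need to be filled before the cycle closes.
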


 If a semigroup of rational maps is finitely generated and  satisfies the condition (3) of Theorem \ref{th.mainA}, then the associated group is virtually cyclic.

The following two theorems  describe the right-amenable semigroups of rational maps. 
 
 \begin{theorem}\label{th.rhorightam}
Let $\rho$ be the Lyubich representation. Assume $S$ is a $\rho$-right-amenable semigroup of rational maps containing a non-exceptional rational map which is not M\"obius conjugated to a polynomial,  then the following statements hold true. 
  
  \begin{enumerate} 
   \item There exists a probability measure $\mu$ invariant under $S$.
   \item If $P\in S$ and $deg(P)>1$ then $\mu$ is the measure of maximal entropy of $P$.
  \end{enumerate}

 \end{theorem}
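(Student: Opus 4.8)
The plan is to obtain $\mu$ as a common fixed point of the family of operators $\{\rho(f)\}_{f\in S}$ and then to identify it with the measure of maximal entropy using the distinguished non-exceptional, non-polynomial generator. Recall that the Lyubich representation sends a map $f$ of degree $d$ to the normalized pullback $\rho(f)=\tfrac1d f^{*}$, which carries the compact convex set $M$ of Borel probability measures on $\bC$ (with the weak-$*$ topology) into itself by an affine, weak-$*$ continuous map, and which satisfies $\rho(f\circ g)=\rho(g)\rho(f)$; thus $\rho$ defines a right action of $S$ on $M$. First I would feed a $\rho$-right-invariant mean into the usual averaging (barycenter) argument: starting from an arbitrary $\nu_0\in M$, the orbit map $f\mapsto\rho(f)\nu_0$ is a bounded $M$-valued function on $S$, and integrating it against the mean produces a measure $\mu\in M$; right-invariance of the mean is exactly what makes $\rho(g)\mu=\mu$ hold for every $g\in S$. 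This yields statement (1), with ``invariant'' understood as $\tfrac1{\deg f}f^{*}\mu=\mu$ for all $f\in S$.

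For statement (2) the main tool is the Freire--Lopes--Ma\~n\'e / Lyubich equidistribution theorem: if $\deg P=d\ge 2$ and $E_P\subset\bC$ is the (finite) exceptional set of $P$, then $\tfrac1{d^{n}}(P^{n})^{*}\delta_z\to\mu_P$ weakly for every $z\notin E_P$, where $\mu_P$ is the measure of maximal entropy. From this I would extract the uniqueness statement that I actually need: any probability measure $\nu$ with $\tfrac1d P^{*}\nu=\nu$ and $\nu(E_P)=0$ equals $\mu_P$. Indeed, writing $L=\tfrac1d P^{*}$ one has $L^{n}\nu=\nu$ and, by linearity and Fubini, $L^{n}\nu=\int L^{n}\delta_z\,d\nu(z)$; testing against a continuous function and applying bounded convergence (legitimate because $\nu(E_P)=0$) forces $\nu=\mu_P$.

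The decisive point is that the hypotheses on the distinguished generator remove all exceptional points. Let $g\in S$ be the non-exceptional rational map that is not M\"obius conjugate to a polynomial. By the classical trichotomy for the exceptional set, $\#E_g=2$ forces $g$ to be conjugate to $z^{\pm\deg g}$ (excluded, since $g$ is non-exceptional), while $\#E_g=1$ forces $g$ to be conjugate to a polynomial (excluded by hypothesis); hence $E_g=\varnothing$. Consequently the condition $\mu(E_g)=0$ is vacuous, and the uniqueness statement applied to $\rho(g)\mu=\mu$ gives $\mu=\mu_g$, so in particular $\mu$ is non-atomic. I stress that this is precisely where ``not conjugate to a polynomial'' is essential: for a polynomial $g$ the atomic measure $\delta_\infty$ is itself $\rho(g)$-fixed, so without this hypothesis the amenable fixed point could be a spurious atomic measure rather than $\mu_g$.

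Finally I would propagate to the whole semigroup: for an arbitrary $P\in S$ with $\deg P>1$ we have $\rho(P)\mu=\mu$, and since $E_P$ is finite and $\mu=\mu_g$ is non-atomic, $\mu(E_P)=0$, whence the uniqueness statement yields $\mu=\mu_P$. This proves (2). I expect the genuine obstacle to be the first step: making the passage from $\rho$-right-amenability to an honest common fixed point of the non-commuting operators $\rho(f)$ fully rigorous (the correct matching of ``right'' with the anti-homomorphism $\rho$, and the weak-$*$ measurability and continuity needed for the barycenter integral). Once a single measure fixed by all of $S$ is in hand, the identification with the measure of maximal entropy is forced by the vanishing of $E_g$ together with the Freire--Lopes--Ma\~n\'e theorem.
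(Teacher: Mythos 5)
Your proposal is correct and follows essentially the same route as the paper: the paper's Theorem \ref{th.RABranched} performs exactly your barycenter construction (applying the $\rho$-right-invariant mean to the functions $s\mapsto\int\rho(s)(\phi)\,d\sigma$, which lie in $X_\rho$, then invoking the Riesz representation theorem), and the identification of $\mu$ with the measure of maximal entropy likewise hinges on the distinguished non-exceptional, non-polynomial element having empty exceptional set, after which non-atomicity of $\mu$ propagates the conclusion to every $P\in S$ of degree at least $2$. The only cosmetic difference is that where you invoke the Freire--Lopes--Ma\~n\'e/Lyubich equidistribution theorem to get uniqueness, the paper uses Lyubich's uniqueness theorem for non-atomic invariant functionals together with Lemma \ref{lm.atomic}, which classifies the possible atomic parts of a Lyubich measure.
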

Since right amenability implies $\rho$-right-amenability for every bounded representation $\rho$ (definitions and discussion in the next section), then Theorem \ref{th.rhorightam} holds for  right-amenable semigroups of rational maps. 

 For polynomials we have the following result.
   
 \begin{theorem}\label{th.rholyub}
  Given a finite non-exceptional collection of  polynomials $\mathcal{F}$, the following conditions are equivalent.
  \begin{enumerate}
   \item The semigroup $S(\mathcal{F})$ is $\rho$-right-amenable for the Lyubich representation $\rho.$
   \item The semigroup $S(\mathcal{F})$ is right amenable.
\item There exists a probability measure $\mu$ invariant under $S(\mathcal{F})$ which coincides with the measure of maximal entropy for every element in $S(\mathcal{F})$.
  \end{enumerate}

 \end{theorem}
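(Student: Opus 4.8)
The plan is to close the cycle $(2)\Rightarrow(1)\Rightarrow(3)\Rightarrow(2)$. The implication $(2)\Rightarrow(1)$ is immediate from the general principle recorded after Theorem \ref{th.rhorightam}: right amenability forces $\rho$-right-amenability for every bounded representation, and the Lyubich representation is bounded, so nothing further is required. The substance of the theorem therefore lies in the remaining two arrows, namely in producing the common measure of maximal entropy out of the weak amenability hypothesis, and then in upgrading the measure-theoretic conclusion back to genuine (right) amenability.

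For $(1)\Rightarrow(3)$ I would reprove, now in the polynomial setting, the mechanism underlying Theorem \ref{th.rhorightam}, which cannot be quoted verbatim since it excludes maps M\"obius conjugate to polynomials. Starting from a right-invariant mean $m$ for the Lyubich representation $\rho$, I would average the transfer operators $L_P f(z)=\frac{1}{\deg P}\sum_{P(w)=z}f(w)$ against $m$, obtaining a positive normalized functional on $C(\bC)$, and apply the Riesz representation theorem to get a probability measure $\mu$; right-invariance of $m$ transcribes into invariance of $\mu$ under all of $S(\mathcal{F})$. The point genuinely special to polynomials is that $\infty$ is a superattracting fixed point, so a priori the averaging could collapse onto $\delta_\infty$. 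I would exclude this using Brolin's equidistribution theorem, which guarantees that the iterated transfer operator of any non-injective polynomial moves mass off the basin of infinity and onto the measure of maximal entropy. Once $\mu$ is seen to be $S(\mathcal{F})$-invariant and supported away from the trivial fixed point, the uniqueness half of Brolin--Lyubich theory identifies $\mu$ with the measure of maximal entropy $\mu_P$ of each $P\in S(\mathcal{F})$ of degree greater than one; non-exceptionality is what renders this maximal-entropy measure rigid enough for the identification to be unambiguous.

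For $(3)\Rightarrow(2)$ I would convert the measure-theoretic hypothesis into the algebraic conditions of Theorem \ref{th.polynomialsemb}. A common measure of maximal entropy means that all elements of $S(\mathcal{F})$ share the same equilibrium measure, hence a common Julia set. Invoking the rigidity of non-exceptional polynomials with a prescribed Julia set, whose group of symmetries is a finite rotation group, I would deduce that every pair of elements admits common iterates, i.e. condition $(2)$ of Theorem \ref{th.polynomialsemb}. That theorem then delivers amenability of $S(\mathcal{F})$ with $RIM(S(\mathcal{F}))\subset LIM(S(\mathcal{F}))$, and in particular right amenability, which is condition $(2)$ here.

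I expect the principal obstacle to be this last step: passing from ``same measure of maximal entropy'' to exact relations $P^m=Q^n$ requires controlling the finite symmetry group of the common Julia set, since the shared-Julia-set rigidity naturally yields identities only up to a symmetry $\sigma$, of the form $P^m=\sigma\circ Q^n$. Ruling out a nontrivial $\sigma$ (or absorbing it using the finiteness of the family and of the symmetry group) is where the real work sits, after which Theorem \ref{th.polynomialsemb} applies directly. By contrast, the $(1)\Rightarrow(3)$ step is largely a transcription of the rational-map argument, its one new ingredient being the exclusion of the superattracting fixed point at infinity.
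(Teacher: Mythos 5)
Your implication $(2)\Rightarrow(1)$ is exactly the paper's (Day's theorem), and your $(1)\Rightarrow(3)$ follows the same mechanism as the paper's Theorem \ref{nonatomicLyubich}: average the Lyubich operators against the mean, apply Riesz, and rule out the purely atomic alternative $\delta_\infty$. One caveat there: Brolin's equidistribution theorem is not the right tool to exclude $\delta_\infty$, because the measure you build is a mean over the whole semigroup, not a Ces\`aro limit of iterates of a single transfer operator. The paper instead uses Lemma \ref{lm.atomic} together with a common forward-invariant disk $D$ at infinity (which exists for a finite family of polynomials): taking $\sigma=\delta_{x_0}$ with $x_0\notin D$ and a test function equal to $1$ off $D$ and $0$ near $\infty$, one gets $H(\phi)\equiv 1$ while $\int\phi\,d\delta_\infty=0$, a contradiction. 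Your sketch can be repaired along these lines.

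The genuine gap is in $(3)\Rightarrow(2)$. You propose to derive common iterates $P^m=Q^n$ and then invoke Theorem \ref{th.polynomialsemb}, and you correctly flag that the rigidity only yields $P^m=\sigma\circ Q^n$ for a symmetry $\sigma$ of the common Julia set, hoping to rule $\sigma$ out. This cannot be done: the symmetry is a genuine obstruction, not a technicality. Take $P(z)=z^6+z^2$, so that the rotation $\rho(z)=iz$ satisfies $P\circ\rho=\rho^2\circ P$ and preserves $J(P)$ and $\mu_P$, and set $Q=\rho\circ P$. Then $\mathcal{F}=\{P,Q\}$ satisfies condition $(3)$ (both maps have maximal entropy measure $\mu_P$, which is $S(\mathcal{F})$-invariant), yet $Q^k=\rho^{3}\circ P^k$ for all $k\geq 2$, so $P$ and $Q$ never share an iterate. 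Consistently with this, the conclusion of Theorem \ref{th.polynomialsemb} is amenability with $RIM\subset LIM$, which is strictly stronger than the right amenability asserted in $(2)$ here; the two theorems are not interchangeable. The paper's route avoids common iterates entirely: it proves the structure theorem $E(P)\cong G(P)\rtimes_\Phi\langle T\rangle$ with $G(P)$ a finite rotation group (Theorem \ref{th.semidirect}), deduces that $E(P)$ is right amenable from Klawe's semidirect-product criterion (Proposition \ref{prop.Klawe}), and then --- since right amenability does not pass to subsemigroups in general --- constructs a right-invariant mean on $E(P)$ charging every cyclic subsemigroup (Lemma \ref{lm.polyRIM}), so that Basic Fact 5 yields right amenability of every subsemigroup of $E(P)$, in particular of $S(\mathcal{F})$. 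You would need to replace your last step by an argument of this kind.
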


 The equivalence of (1) and (2) is rare even among groups. According to M. Day (see Theorem 2 in  \cite{Daymeans}) a semigroup $S$ is right amenable if and only if $S$ is $\rho$-right amenable for every bounded representation $\rho.$
 
 Hence the theorem above presents an interest from this point of view.
 
 For rational maps, Theorem \ref{th.rholyub}  is still an open question. We propose the following:

 \begin{conjecture} If a semigroup $S$ of rational maps  admits an invariant probability meausre which is  the measure of maximal entropy of every non-injective element of $S$, then $S$ is a right-amenable semigroup.
 \end{conjecture}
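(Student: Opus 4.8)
The plan is to reduce the conjecture to the implications already established in Theorems \ref{th.mainA}, \ref{th.polynomialsemb} and \ref{th.rholyub}, by first converting the hypothesis on the measure of maximal entropy into the input those results require. Write $\mu$ for the common invariant probability measure, assumed to be the measure of maximal entropy of every non-injective $g\in S$. Since $\mu$ is then a common fixed point of the Lyubich operators $\tfrac{1}{\deg g}\,g^{*}$, integration against $\mu$ produces a $\rho$-invariant state for the Lyubich representation $\rho$, and the first step is to promote this single invariant state to a genuine $\rho$-right-invariant mean on $S$, giving $\rho$-right-amenability. I expect this part to be comparatively soft: a weak-$*$ compactness and averaging argument over the right translates of the orbit of $\mu$, in the spirit of the fixed-point characterisations of amenability, and essentially the reverse of the implication proved in Theorem \ref{th.rhorightam}. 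For non–finitely generated $S$ one exhausts by finitely generated subsemigroups.

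The decisive step is dynamical rather than functional-analytic, and I would organise it around the exceptional/non-exceptional dichotomy. Suppose first that $S$ contains a non-exceptional element not M\"obius conjugate to a polynomial. For each pair $P,Q\in S$ the identity $\mu_P=\mu_Q=\mu$ forces the supports, hence the Julia sets, to coincide, and the equidistribution of preimages and of periodic points against the single measure $\mu$ should produce a point $z_0$ whose forward $P$- and $Q$-orbits meet in an infinite set. This is precisely the orbit-intersection condition appearing in part (1)/(4) of Theorem \ref{th.mainA}; combined with the $\rho$-right-amenability obtained in the first step it verifies condition (4), and the equivalences of that theorem then deliver right-amenability (condition (3)) together with embeddability into a group. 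When every non-injective element of $S$ is M\"obius conjugate to a polynomial, the same dynamical input feeds Theorems \ref{th.polynomialsemb} and \ref{th.rholyub} instead. In both regimes the M\"obius elements of $S$ act as symmetries of $\mu$ and form a finite group, which is exactly the source of the virtually cyclic structure predicted after Theorem \ref{th.mainA}.

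The remaining, purely exceptional, case is handled separately and is comparatively benign. Maps sharing the measure of maximal entropy must be of the same exceptional type --- power, Chebyshev, or Latt\`es on a fixed underlying curve --- and within each type such maps commute. Consequently $S$ is an essentially commutative semigroup, and commutative semigroups are always right amenable. It is worth recording that in this regime the common-iterate relation genuinely fails: $z^{2}$ and $z^{3}$ share the measure of maximal entropy yet no iterates coincide, so here amenability must come from commutativity rather than from an embedding into a virtually cyclic group.

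The main obstacle is the rigidity lemma at the heart of the non-exceptional case: extracting from the mere coincidence $\mu_P=\mu_Q$ an honest orbit intersection, or directly the algebraic relation $P^{m}=Q^{n}$. Equality of the measures of maximal entropy controls only the asymptotic distribution of preimages and of periodic points, and turning this statistical information into the existence of a single point with infinite orbit overlap is the delicate point; it is closely related to the rigidity results of Levin--Przytycki and of Ye on rational maps sharing a Julia set or a maximal-entropy measure. It is exactly this implication --- for general rational, as opposed to polynomial, maps, where no analogue of the Ghioca--Tucker--Zieve input is currently available --- that prevents the argument from being unconditional and that accounts for the statement being posed as a conjecture.
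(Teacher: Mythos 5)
This statement is posed in the paper as an open conjecture: the authors prove it only for polynomials (Theorem \ref{th.rholyub}, via Corollary \ref{cor.ramenability} and Theorem \ref{th.entropy}) and explicitly say the rational case is open. Your proposal is a reduction plan rather than a proof, and both of its load-bearing steps are exactly the open points. First, the step you call ``comparatively soft'' --- promoting the single invariant functional $\phi\mapsto\int\phi\,d\mu$ on $C(\overline{\C})$ to a $\rho$-right-invariant mean on $X_\rho\subset L_\infty(S)$ --- is neither soft nor how the paper proceeds even for polynomials. Pairing with $\mu$ only makes the functions $f_{(\phi,\mu)}$ constant; $X_\rho$ is spanned by $f_{(\phi,\nu)}$ for \emph{all} $\nu\in C(\overline{\C})^*$, and no weak-$*$ averaging of right translates is available without already having some amenability of $S$ in hand. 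The paper's actual route in the polynomial case bypasses this entirely: it proves the structure theorem $E(P)\cong G(P)\rtimes_\Phi\Gamma$ with $G(P)$ finite and $\Gamma$ abelian (Theorem \ref{th.semidirect}), applies Klawe's Proposition \ref{prop.Klawe} to get $RIM(E(P))\neq\emptyset$, and descends to arbitrary subsemigroups via Lemma \ref{lm.polyRIM} and Basic Fact 5. The absence of such a structure theorem for $E(R)$ when $R$ is rational and not conjugate to a polynomial is precisely why the statement remains a conjecture.

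Second, your plan to conclude via Theorem \ref{th.mainA} is partly circular: conditions (1), (3) and (4) of that theorem all contain a form of right amenability as a hypothesis, so the only entry point that would yield right amenability as an output is condition (2), a common iterate for every pair. The hypothesis ``common measure of maximal entropy'' only yields, through Theorem \ref{th.Levin-Przyticki}, iterates satisfying the Levin relations, and this does not formally give a common iterate --- in the paper's proof of Theorem \ref{th.Intersection} the DIP hypothesis is used precisely to upgrade the Levin relations $P^m\circ Q^n=P^{2m}$, $Q^n\circ P^m=Q^{2n}$ to the equality $P^m=Q^n$. Producing an infinite orbit intersection, or a common iterate, from $\mu_P=\mu_Q$ alone is the rigidity statement you would have to prove, and you correctly flag that you cannot. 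Finally, your disposal of the exceptional case is wrong as stated: maps of the same exceptional type sharing the maximal entropy measure need not commute (for $|\lambda|=1$ a nontrivial root of unity, $\lambda z^2$ and $z^3$ both preserve normalized arclength on the unit circle, yet $\lambda z^6\neq\lambda^3 z^6$), so ``essentially commutative, hence amenable'' does not apply; the paper treats the exceptional power-map case by a separate semidirect-product argument in Theorem \ref{th.semidirect} and Corollary \ref{cor.ramenable}.
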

 
 Theorem \ref{th.rholyub} gives a partial answer to this conjecture (see also Theorem \ref{th.entropy} and Corollary \ref{cor.ramenable}).
 
 In fact, Theorems \ref{th.polynomialsemb}-\ref{th.rholyub} hold in more general settings, see the discussion in  Section 3.

In the last section we discuss amenability properties for another important representation in holomorphic dynamics, namely Ruelle representation. The Ruelle representation is closely related to quasiconformal deformations of rational maps. In Proposition \ref{pr.hypstruc} and Proposition \ref{pr.collstruc}
we show that a right amenable semigroup $S$ is quasiconformally deformable whenever $S$ contains a hyperbolic element which admits a non-trivial quasiconformal deformation.  Even more, a semigroup $S$ of rational maps is structurally stable whenever $S$ satisfies the Levin relations, is finitely generated  and contains a structurally stable element. 

To every rational map $R$  of degree at least $2$, we associate a right-amenable finitely generated semigroup of operators $D(R)$ acting on the space $L_1(A)$, for every Lebesgue measurable subset $A$  completely invariant with respect to $R.$ In Proposition \ref{pr.BeltDR} we observe that if $A$ does not possess a Beltrami differential, invariant under $R$, then the action of $D(R)$ on $L_1(A)$ is left amenable. The main theorem of the last section is the following. 
 \begin{theorem}\label{th.Ruelle} If $R$ is a rational map and assume that the action of $D(R)$ in $L_1(\overline{\C})$ is left-amenable. Then the following conditions are equivalent:
 \begin{enumerate}
  \item The Ruelle operator $R_*$ has non-zero fixed points in $L_1(\overline{\C})$.
  \item $R$ is M\"obius conjugated to a flexible Latt\'{e}s map. 
 \end{enumerate}
\end{theorem}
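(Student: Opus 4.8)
The plan is to establish the two implications separately. The direction $(2)\Rightarrow(1)$ is an explicit construction that does not use left-amenability, whereas $(1)\Rightarrow(2)$ carries the real content and is where the hypothesis on $D(R)$ and the rigidity of Lattès maps enter.

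For $(2)\Rightarrow(1)$, suppose $R$ is Möbius conjugate to a flexible Lattès map. Then there are a torus $T=\C/\Lambda$, an affine map $L(z)=nz+c$ with $n\in\Z$ and $n^2=\deg R$, and a finite branched covering $\theta\colon T\to\overline{\C}$ with $R\circ\theta=\theta\circ L$. The flat quadratic differential $dz^2$ on $T$ is fixed by the transfer operator of $L$, since $L_*(dz^2)=\sum_{L(w)=z}dz^2/(L'(w))^2=n^2\cdot dz^2/n^2=dz^2$; the integer, hence real, multiplier is exactly what makes $dz^2$ invariant. Pushing $dz^2$ forward by $\theta$ yields a measurable quadratic differential $q_0\neq 0$ on $\overline{\C}$ whose total mass $\int_{\overline{\C}}|q_0|$ is a finite multiple of the flat area of $T$, so that $q_0\in L_1(\overline{\C})$; the semiconjugacy then gives $R_*q_0=q_0$, which is condition $(1)$.

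For $(1)\Rightarrow(2)$, let $q\in L_1(\overline{\C})$ with $q\neq 0$ and $R_*q=q$. Since $R_*$ is the predual of the pull-back operator on Beltrami differentials it is a contraction, $\|R_*q\|_1\le\|q\|_1$, and the triangle inequality applied to $R_*q(z)=\sum_{w\in R^{-1}(z)}q(w)/(R'(w))^2$ is saturated exactly when, for almost every $z$, the summands share a common argument. The equality $R_*q=q$ forces this no-cancellation condition, which says precisely that the measurable line field $\overline{q}/|q|$, defined almost everywhere on the support $E=\{q\neq 0\}$, is invariant under $R$; equivalently $\mu=\overline{q}/|q|\in L^\infty(\overline{\C})$ is an $R$-invariant Beltrami differential of modulus one on $E$.

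It then remains to identify $R$. First I would use the left-amenability of the $D(R)$-action on $L_1(\overline{\C})$, together with the mechanism behind Proposition \ref{pr.BeltDR}, to control the support $E$: the invariant line field must be compatible with the left-invariant mean, which rules out its being carried by a proper invariant piece such as a rotation domain in the Fatou set (where, in any case, integrability of $q$ already fails), and forces $E$ to have full measure, so that $R$ admits an invariant line field almost everywhere on $\overline{\C}$. I would then invoke the classification of rational maps carrying a measurable invariant line field on a set of positive measure: any such map is a Lattès map, and since invariance of a line field requires the linear multiplier to preserve a direction, that multiplier must be real, which selects the flexible case (the rigid Lattès maps have non-real multiplier, rotate every direction, and admit no invariant line field). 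This yields $(2)$. The main obstacle is this final stretch: making the equality case of the contraction rigorous and, above all, coupling it with left-amenability to localize the support on a set large enough to trigger Lattès rigidity. The Lattès classification is a deep but citable input, so the delicate work lies in the amenability bookkeeping and the equality analysis rather than in the rigidity theorem itself.
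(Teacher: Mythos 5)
Your direction $(2)\Rightarrow(1)$ is correct and is essentially the paper's argument in different clothing: the paper takes $\overline{\mu}\,\omega$ where $\omega$ is the Lebesgue density of the maximal entropy measure (supplied by Zdunik's theorem), and that function is exactly your push-forward $\theta_*(dz^2)$ of the flat quadratic differential. Likewise, your extraction of the invariant Beltrami differential $\overline{q}/|q|$ on $\{q\neq 0\}$ from the equality case of the contraction is exactly the content of Lemma A of \cite{MakRuelle}, which the paper cites at the same point. The problem is everything after that.

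The step in which you ``invoke the classification of rational maps carrying a measurable invariant line field on a set of positive measure: any such map is a Latt\`es map'' is a genuine gap: no such classification exists. That statement is the no-invariant-line-fields conjecture, which is open in general; and on the Fatou set it is simply false --- attracting basins carry invariant line fields via linearizing coordinates (this paper's own Proposition \ref{pr.hypstruc} builds such a line field for a hyperbolic map), so even knowing that $E$ has full measure would not ``trigger Latt\`es rigidity.'' The entire purpose of the left-amenability hypothesis is to substitute for this unproven rigidity, and it is used in a completely different way from your proposed ``bookkeeping on the support.'' The paper's actual chain is: the separation principle makes $R_*$ mean-ergodic on $L_1(\mathrm{supp}\,\mu)$, hence (Kornfeld--Lin) weakly almost periodic; left-amenability of $\mathcal{S}(\Gamma)$ then produces an invariant functional $\ell$ on $L_\infty(\mathrm{supp}\,\mu)$; weak almost periodicity upgrades the associated finitely additive invariant measure to a countably additive one absolutely continuous with respect to Lebesgue, so $\ell$ is represented by a density $w$ which must be a multiple of $f$; transitivity of $\Gamma<D(R)$ converts invariance of $w$ under the deck maps into the identity $w(R)\,(R')^2/\deg R=w$, which says $|w|$ is an invariant density for the \emph{Lyubich} operator; uniqueness of the measure of maximal entropy then makes that measure absolutely continuous, Zdunik's theorem (which \emph{is} a citable theorem, unlike the line-field classification) forces $R$ to be exceptional, and the invariant Beltrami differential singles out the flexible Latt\`es case. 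None of these ingredients --- mean-ergodicity, weak almost periodicity, the passage from a finitely additive invariant mean to an $L_1$ density, or the reduction to the Lyubich operator and Zdunik --- appears in your proposal, and without them the argument does not close.
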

 
 The dynamics of non-cyclic semigroups of rational maps initiated by A. Hinkkanen and G. Martin in \cite{HinkkMartSemigroups} is now an active area of research in holomorphic dynamics. Yet another approach is presented in \cite{CaMakPl} and \cite{EreEnd}.
 
 In particular, in \cite{HinkkMartSemigroups} the authors adapt the Klein-Maskit combination theorem to construct free semigroups of rational maps.  The arguments in \cite{HinkkMartSemigroups}, allow to show the following statement. 
 
 \textit{If the polynomials $P, Q$ have mutually disjoint filled Julia sets, then there are integers  $m,n>0$ such that the semigroup $\langle P^m, Q^n\rangle$ is free.}
 
 So the semigroup $\langle P, Q \rangle$ contains a free two-generated subsemigroup, which is neither right nor left amenable. This observation is another motivation for considering amenability conditions.  
 
We have the following conjecture:
\begin{conjecture}
A finitely generated semigroup $S$ not containing a rank 2 free subsemigroup admits an invariant probability meausre which is the measure of maximal entropy of every non-injective element of $S$.
\end{conjecture}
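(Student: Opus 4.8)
The plan is to reduce the conjecture to the amenability criteria already established in this paper and then to confront the one genuinely new step via a Tits-type alternative. Observe first that the desired conclusion---an invariant probability measure coinciding with the measure of maximal entropy of every non-injective element---is precisely condition (3) of Theorem \ref{th.rholyub} in the polynomial case and the conclusion of Theorem \ref{th.rhorightam} in the genuinely rational case. By Theorem \ref{th.rhorightam} it therefore suffices to prove that $S$ is $\rho$-right-amenable for the Lyubich representation $\rho$, and for polynomials Theorem \ref{th.rholyub} makes $\rho$-right-amenability equivalent to the conclusion outright. Thus the whole conjecture is implied by the single assertion, call it (A): \emph{if the finitely generated $S$ contains no rank $2$ free subsemigroup, then $S$ is $\rho$-right-amenable.} I would organize everything around (A), splitting off the exceptional elements ($z^{\pm n}$, Chebyshev, Latt\`es) and the injective (M\"obius) elements, whose maximal-entropy measures are explicit, so that the main effort concerns the degree-$\ge 2$, non-exceptional part.

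Next I would translate (A) into dynamical language using the orbit-intersection machinery. By the generalization of the Ghioca--Tucker--Zieve theorem \cite{GhiTucZieve} underlying Theorems \ref{th.polynomialsemb} and \ref{th.mainA}, two non-exceptional non-injective maps $P,Q\in S$ share a common iterate $P^m=Q^n$ if and only if their forward orbits intersect infinitely often, which in turn is equivalent to $P$ and $Q$ having the same measure of maximal entropy. Combined with Theorem \ref{th.mainA}, this shows that (A) follows from the claim (C): \emph{if $S$ has no rank $2$ free subsemigroup, then every pair of non-injective elements shares its measure of maximal entropy}; once (C) holds, condition (2) of Theorem \ref{th.mainA} (respectively the hypotheses of Theorem \ref{th.polynomialsemb}) is met, so by the equivalences there $S$ is right amenable, hence $\rho$-right-amenable. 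So I would aim to prove the contrapositive of (C): if some pair $P,Q\in S$ has $\mu_P\neq\mu_Q$, then $S$ contains a rank $2$ free subsemigroup.

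The engine for producing free subsemigroups is the Klein--Maskit combination argument of Hinkkanen and Martin \cite{HinkkMartSemigroups}, which yields a free subsemigroup $\langle P^m,Q^n\rangle$ whenever $P$ and $Q$ have disjoint filled Julia sets. The key step of my approach is to strengthen this from \emph{disjoint filled Julia sets} to \emph{distinct measures of maximal entropy}: I would construct a ping-pong configuration for suitable high iterates $P^m,Q^n$ directly from the inequality $\mu_P\neq\mu_Q$. The idea is that one can choose disjoint open sets $U,V$ into which high iterates of $P$, respectively $Q$, pull back most of the mass, by equidistribution of preimages toward the measure of maximal entropy (Brolin--Lyubich--Freire--Ma\~n\'e); playing these contractions off against one another inside the Lyubich representation produces a ping-pong pair, and because $\rho$ is essentially faithful on non-exceptional maps, the resulting free subsemigroup of operators lifts to a genuine free subsemigroup of $S$. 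The M\"obius part of $S$ would be treated by the classical Tits alternative in $PSL_2(\C)$, and invariance of the candidate measure $\mu$ under a M\"obius element $g\in S$ would follow from the compatibility of $\mu$ with the degree-$\ge 2$ products $gP,Pg\in S$, which pin $\mu$ down uniquely.

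I expect the genuine obstacle to be exactly this strengthened ping-pong. The von Neumann--Day problem shows that ``no free subsemigroup $\Rightarrow$ amenable'' is false for abstract semigroups (Ol'shanskii, Adyan), so (A) cannot be formal and must exploit the analytic rigidity of rational dynamics. When the Julia sets of $P$ and $Q$ overlap but the maximal-entropy measures differ, the clean Klein--Maskit separation of \cite{HinkkMartSemigroups} is unavailable, and one must instead build the ping-pong sets in the space of measures, controlling the interplay of the two transfer operators near the coincidence locus $\mathrm{supp}(\mu_P)\cap\mathrm{supp}(\mu_Q)$. Making this quantitative---showing that $\mu_P\neq\mu_Q$ propagates to a genuine contraction/repulsion pattern for the iterated push-forwards despite overlapping supports---is the crux, and I would expect to need rigidity results on coincidence of measures of maximal entropy (in the spirit of Levin, Przytycki and Baker--Eremenko) to rule out the degenerate configurations in which no ping-pong exists.
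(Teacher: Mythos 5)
This statement is a \emph{conjecture} in the paper: the authors give no proof of it, and they explicitly remark that it was only later answered elsewhere (in the reference cited as \cite{TitsTucker}). So there is no proof in the paper to compare yours against, and your proposal must be judged on its own. As written it is a plan rather than a proof: the entire mathematical content of the conjecture is concentrated in the step you yourself flag as the ``crux,'' namely that $\mu_P\neq\mu_Q$ for two non-injective elements forces a rank $2$ free subsemigroup. The Hinkkanen--Martin combination argument you invoke genuinely requires disjoint filled Julia sets; when the Julia sets overlap but the maximal-entropy measures differ, no ping-pong construction is supplied, only the hope that equidistribution of preimages will produce one. Since (as you correctly note via von Neumann--Day) ``no free subsemigroup $\Rightarrow$ amenable'' fails for abstract semigroups, this step cannot be formal, and leaving it open means the conjecture is not proved.

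There is also a concrete error in your reduction. You assert that two non-exceptional non-injective maps share a common iterate if and only if they have the same measure of maximal entropy, and hence that your claim (C) yields condition (2) of Theorem \ref{th.mainA}. The paper's Theorem \ref{th.Levin-Przyticki} only converts equality of maximal-entropy measures into the \emph{Levin relations} for suitable iterates; passing from Levin relations to an actual common iterate is exactly what requires the extra hypotheses (the dynamical intersection property, embeddability into a group, or $RIM\subset LIM$) that Theorems \ref{th.mainA}, \ref{th.Intersection} and \ref{th.familypolynomials} are about, and the paper's Example \ref{example} together with Theorem \ref{th.familypolynomials} shows that right-amenable polynomial semigroups sharing a maximal-entropy measure need not satisfy $AIP$ in general. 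Fortunately this detour is unnecessary: if you could actually prove (C), then the common maximal-entropy measure is already the invariant measure the conjecture asks for on the non-injective part, and only its invariance under the M\"obius elements of $S$ would remain (where your remark about the products $gP$, $Pg$ is the right idea, modulo the admissability hypotheses that Theorems \ref{th.rhorightam} and \ref{th.rholyub} require and that an arbitrary finitely generated $S$ need not satisfy). The verdict is that the proposal identifies the correct difficulty but does not overcome it.
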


 In this work, we will use standard notations and concepts from holomorphic dynamics which can be found, for instance, in \cite{L}.
 
 Let us describe an outline of the article as follows,  in section 2, we give some basic facts and notions of amenability of semigroups, holomorphic dynamics, measure or maximal entropy and Ruelle and Lyubich representations. We also introduce three intersection properties for semigroups of rational maps, the comparation between them is one of the main motivations of this work leading to Theorem \ref{th.polynomialsemb}, this theorem is proved in a slightly more general version in section 3 as Theorem \ref{th.familypolynomials}. 
 
 Theorem \ref{th.mainA} is the version of Theorem \ref{th.polynomialsemb} for arbitrarily semigroups of rational maps containing an non-exceptional map not M\"obius conjugated to a polynomial. Theorem \ref{th.mainA} is proved in section 3 as Theorem \ref{th.Intersection}.  
 
 The proofs of Theorem \ref{th.polynomialsemb} and Theorem \ref{th.mainA}  are based upon Theorem \ref{th.rhorightam} and Theorem \ref{th.rholyub}, respectively, which describe (and characterize for polynomials) right amenable semigroups of rational maps. These theorems  are proven in section 3 as 
 Theorem \ref{nonatomicLyubich} and Corollary \ref{cor.ramenability}, respectively. The gap between the stronger case of polynomials and arbitrary semigroups rational maps is contained in the conjectures above. So in section 3, we consequently develop all ingredients and combine them toghether to form the proofs. 
 
  Presenting independent interest, Proposition \ref{pr.33} and  Corolary \ref{cor.34} describe quotients and representations of rational semigroups for an equivalence relation motivated by Proposition  \ref{pr.Ritt}. Also,  Theorem \ref{th.cmiterate} and Theorem \ref{th.RLamenable} can be regarded as a characterization of amenable semigroups of polynomials.  
 
Finally, last section is devoted to the proof of  Theorem \ref{th.Ruelle} which is presented as Theorem \ref{th.fixed}.

While this paper was under revision, we learned that some results of this paper were generalized in \cite{pakovich2020amenable} and later the conjecture about rank 2 free semigroups was answered in \cite{TitsTucker}.

 \textbf{Acknowledgement.} The authors would like to thank F. Pakovich for useful discussions, and to the referee for useful remarks and suggestions. 
 
\section{Preliminaries}

\subsection{Semigroup amenability}\label{sect.Semigroup}

Let $S$ be a semigroup and let $L_\infty(S)$ be the linear space of bounded complex valued functions equipped with the supremum norm. A continuous linear functional  $M$ on $L_\infty(S)$ is called a \textit{mean} if $M$ satisfies the following properties:

\begin{enumerate}
 \item $M$ is positive,  that is, if $\phi\in L_\infty(S)$ and $\phi\geq 0$ then $M(\phi)\geq 0.$
 \item $\|M\|=M(\chi_S)=1$, where $\chi_S$ denotes the characteristic function of $S.$
\end{enumerate}
The right and left actions of $S$ onto itself generate right and left actions on the space $L_\infty(S)$ given by the formulas 

$$r_s(\phi)(x)=\phi(x s)$$
$$l_s(\phi)(x)=\phi(s x)$$
for every $s,x\in S$, and $\phi\in L_\infty(S), $ respectively.

These actions induce right and left representations of the semigroup $S$ into the semigroup $End(L_\infty(S))$ of linear continuous  endomorphisms of $L_\infty(S)$ given by $s\mapsto r_s$ and $s\mapsto l_s.$

The semigroup $S$ is called \textit{right amenable}, or shortly an RA-semigroup, if there exists a mean  which is invariant for the right action 
of $S$ on $L_\infty(S)$ that is  $M(r_s(\phi))=M(\phi)$ for every $s\in S$ and $\phi\in L_\infty(S)$. 
We denote by $RIM(S)$ the set of all right invariant means on the 
semigroup $S.$ Note that $RIM(S)$ is a convex, closed subset of 
$L^*_\infty(S)$ which does not contains the $0$ functional. Even more $RIM(S)$ is compact in the $*$-weak topology. 

Analogously, $S$ is called \textit{left amenable}, or an LA semigroup for short, if there is a mean invariant under the left action. We denote by $LIM(S)$ the set of all left invariant means.  

Finally, the semigroup $S$ is called an \textit{amenable semigroup}, if $RIM(S)\cap LIM(S)\neq 
\emptyset.$   

Let us mention some basic facts about amenable semigroups.  Further details and proofs may be found in the papers of M. Day  \cite{DayAmenable} and \cite{DaySemigroups}. 

\medskip

\textbf{Basic Facts on Amenability.}
\begin{enumerate}
 \item Every abelian semigroup is amenable.
 \item Every finite group is amenable and not every finite semigroup is amenable. For example,  the finite semigroup $\langle a,b: ab=a^2=a, ba=b^2=b\rangle$
 is not an LA-semigroup.
 \item Every semigroup is a subsemigroup of an amenable semigroup. But, for groups every 
  subgroup of an amenable group is amenable.
 \item Given a semigroup $S$, let us consider the antiproduct $*$ on the set $S$ defined by $a*b=b 
 a.$  The set $S$ equipped with the product $*$ is a semigroup anti-isomorphic to $S$ and thus the space $RIM(S)$ coincides with  $LIM(S,*)$. In the same way, the left action of  $S$ on $L_\infty(S)$ is the right action of $(S,*)$ on $L_\infty(S).$

 \item Let $M$ be an element of either $LIM(S)$ or $RIM(S).$ If $M(\chi_{S_0})>0$ for a subsemigroup $S_0<S$. Then $S_0$ is itself either an LA or an RA-semigroup, respectively. 
 \item Let $\phi:S_1\rightarrow S_2$ be an epimorphism of semigroups, then  $S_2$ is either $RA$ or $LA$ whenever $S_1$ is $RA$ or $LA$, respectively.
 \end{enumerate}

Let us consider a weaker version of amenability, namely $\rho$-\textit{amenability} which was introduced by M. Day in \cite{DaySemigroups}.

First, we say that a proper right (left) $S$-invariant  subspace $X\subset L_\infty(S)$ is called either \textit{right} or \textit{left amenable}, again RA or LA for short,  if $X$ contains constant functions and there exists a mean $M$ such that, when $M$ is restricted to $X$, it induces a functional which is invariant for either  the right or left actions of $S$ on $X$, respectively. In other words $X$ is invariant and admits an invariant functional for the associated action. Note that every semigroup admits an amenable subspace, for example,   the subspace  of constant functions is always amenable. 

Now, let $\rho$ be a bounded homomorphism from $S$ into $End(B)$, where $B$ is a Banach space, and $End(B)$ is the semigroup of continuous linear endomorphisms. Given a pair 
$(b,b^*)\in B\times B^*$ consider the function $f_{(b,b^*)}\in L_\infty(S)$ given by
$$f_{(b,b^*)}(s)=b^*(\rho(s)(b)).$$  Let $Y_\rho\subset L_\infty(S)$, be the closure of the linear span of the family of functions $\{ f_{(b, b^*)} \}$ for all pairs $(b,b^*)\in B\times B^*.$ Finally let $X_\rho$ be the space generated by $Y_\rho$ and the constant functions. Note that $X_\rho$ and $Y_\rho$ are both right and left invariant. 
\begin{definition}
We will say that $\rho$ is either \textit{RA or LA} whenever $X_\rho$ is either a right or left amenable subspace of $L_\infty(S)$, respectively.  Also we will say that $S$ is $\rho$-RA or $\rho$-LA whenever $\rho$ has the respective property. Equivalently, that the $\rho$-action of $S$ on $B$ is either RA or LA, respectively.
\end{definition}

To show that, in general, amenability is different from $\rho$-amenability let us recall Day's theorem  (see Theorem 2 in  \cite{Daymeans}).

\begin{theorem}[Day Theorem]\label{th.Day} A  semigroup $S$ is either RA or LA if and only if $S$ is either $\rho$-RA or $\rho$-LA, respectively, for every bounded representation $\rho$.
 
\end{theorem}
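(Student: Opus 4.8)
First I would reduce the left-amenable case to the right-amenable one. By Basic Fact (4), passing to the antiproduct $(S,*)$ with $a*b=ba$ interchanges the left and right actions on $L_\infty(S)=L_\infty(S,*)$ and identifies $LIM(S)$ with $RIM(S,*)$; moreover a bounded representation $\rho$ of $S$ yields a bounded representation of $(S,*)$ with the same subspaces $Y_\rho,X_\rho\subset L_\infty(S)$, merely interchanging the notions of $\rho$-LA and $\rho$-RA. Hence it suffices to prove the equivalence in its right-handed form: $S$ is right amenable if and only if $S$ is $\rho$-RA for every bounded representation $\rho$.

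The forward implication is immediate. If $M\in RIM(S)$ and $\rho$ is any bounded representation, then $X_\rho$ is a right-invariant subspace containing the constants, and the restriction $M|_{X_\rho}$ is a functional induced by a mean which is right-invariant on $X_\rho$. Thus $X_\rho$ is a right-amenable subspace, so $S$ is $\rho$-RA.

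For the converse the strategy is to exhibit a single bounded representation $\rho_0$ for which $X_{\rho_0}$ is all of $L_\infty(S)$; then, applying the hypothesis to $\rho_0$, the mean witnessing $\rho_0$-RA is right-invariant on $X_{\rho_0}=L_\infty(S)$, hence lies in $RIM(S)$ and $S$ is right amenable. The natural candidate is the right regular representation on $B=L_\infty(S)$, $\rho_0(s)=r_s$, which is bounded since $\|r_s\|\le 1$. If $S$ possessed an identity $e$, then taking $b^{*}$ to be evaluation at $e$ gives $f_{(b,b^{*})}(s)=b^{*}(r_s b)=b(es)=b(s)$, so every $b\in L_\infty(S)$ already lies in $Y_{\rho_0}$ and $X_{\rho_0}=L_\infty(S)$, finishing the argument at once.

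The main obstacle is that a semigroup of rational maps carries no identity: the identity map is injective and excluded, and no $x\in S$ satisfies $x\circ s=s$ (degree considerations force $x$ M\"obius, hence the identity on the cofinite image of $s$), so no functional on $L_\infty(S)$ can play the role of evaluation at $e$. For the plain right regular representation $X_{\rho_0}$ is only the closed span of the functions $s\mapsto b^{*}(r_s b)$, which need not exhaust $L_\infty(S)$ precisely because $S$ fails to be left cancellative. I would resolve this by adjoining an identity, forming $S^1=S\cup\{e\}$, and invoking the standard lemmas that $S$ is right amenable if and only if $S^1$ is, and that bounded representations of $S$ correspond to those of $S^1$ via $\rho(e)=\mathrm{id}_B$, so that the $\rho$-RA hypothesis transfers. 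On $S^1$ the computation above gives $X_{\rho_0}=L_\infty(S^1)$ and a right-invariant mean, which restricts to a right-invariant mean on $S$. The reason this indirect route is forced, rather than extending the subspace-invariant functional from $X_{\rho_0}$ to $L_\infty(S)$ directly, is that $S$ is typically non-commutative, so the adjoint translations $r_s^{*}$ do not commute and no Markov--Kakutani fixed-point argument is available; engineering $X_{\rho_0}$ to be the entire space via the regular representation together with the unitization is exactly what makes the invariance automatic.
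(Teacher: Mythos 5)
The paper does not prove this statement at all: it is quoted verbatim from Day (Theorem~2 of \cite{Daymeans}) and used as a black box, so there is no internal proof to compare against. Your argument is essentially the standard proof of Day's theorem and is sound in its core: the forward direction is restriction of a right-invariant mean to $X_\rho$, and the converse follows by applying the hypothesis to the right regular representation $\rho_0(s)=r_s$ on $B=L_\infty(S^1)$, where evaluation at the adjoined identity gives $f_{(b,\mathrm{ev}_e)}=b$ and hence $X_{\rho_0}=L_\infty(S^1)$; the unitization lemmas you invoke ($S$ is RA iff $S^1$ is, and $\rho$-RA passes between $S$ and $S^1$) are standard and do hold. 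In fact you can shortcut slightly: restricting $\rho_0$ to $S$ already yields a bounded representation of $S$ itself with $X_{\rho}=L_\infty(S)$, since $b\mapsto b|_S$ is onto, so the mean produced by the hypothesis is directly in $RIM(S)$.

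Two small repairs. First, your reduction of the LA case via the antiproduct is wrong as written: a homomorphism $\rho:S\to End(B)$ satisfies $\rho(a*b)=\rho(b)\rho(a)$, so it is an \emph{anti}-representation of $(S,*)$, not a representation, and the two classes of representations do not coincide. This is harmless because the fix is immediate — either pass to the adjoint representation $s\mapsto \rho(s)^*$ on $B^*$, which is a genuine representation of $(S,*)$ with $Y_\rho\subseteq Y_{\rho^*}$, or simply observe that the same right regular representation $\rho_0$ (a genuine homomorphism) has $X_{\rho_0}=L_\infty(S^1)$, so $\rho_0$-LA already forces a left-invariant mean on all of $L_\infty(S^1)$ with no antiproduct needed. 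Second, the paper's definition of an amenable subspace requires $X$ to be a \emph{proper} subspace of $L_\infty(S)$, which taken literally would exclude $X_{\rho_0}=L_\infty(S^1)$; this is evidently a slip in the paper's wording rather than a defect of your argument, but you should be aware that your proof depends on reading that definition without the properness restriction.
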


Roughly speaking, the existence of an invariant functional on a proper subspace does not always implies the existence of an invariant functional on $L_\infty(S)$. 

For example consider a free group $G$ which is neither RA nor LA. Let $h$ be a homomorphism from $G$ onto a non trivial abelian group $\Gamma$, then  the space $h^*(L_\infty(\Gamma))\subset L_\infty(G)$ is amenable,  where $h^*(\phi)=\phi\circ h$ is the pull-back operator. We do not know examples of semigroups which are neither $\rho$-RA nor $\rho$-LA for every bounded representation $\rho$,  even in the case when the associated Banach space is infinitely dimensional.  

\subsection{Maximal entropy and representations}

In this article we consider two important representations of 
semigroups of rational maps. Namely, Lyubich and Ruelle representations, these are push-forward actions of rational maps on the spaces $C(\overline{\C})$ and $L_1(\overline{\C})$ of continuous and Lebesgue integrable functions on the Riemann sphere $\overline{\C}$, respectively.

Let us first discuss Lyubich representation. Every rational map $R$ induces an  operator given by $$L_R(\phi)(y)=\frac{1}{deg(R)}\sum_{R(x)=y} \phi(x)$$ where the sum is taken with multiplicities. The operator $L_R$ is a continuous endomorphism of $C(\overline{\C})$ with the unit norm. The operator $L_R$ was firstly considered by M. Lyubich in \cite{LyubichEntropy}, we call $L_R$ the \textit{Lyubich operator} of the rational map $R.$  Now we reformulate the main results of \cite{LyubichErgodicTheory} as follows:
 
\begin{theorem}\label{th.Lyubich}
 For every rational map $R$ with $deg(R)>1$ there exist an invariant non-atomic probability measure $\mu_R$ which represents an invariant functional on $C(\C)$ with respect to the Lyubich operator $L_R$. The measure $\mu_R$  is of maximal entropy, ergodic and unique in the following sense: if an $L_R$-invariant functional is generated by a non-atomic measure $\nu$, then $\nu$ is a multiple of $\mu_R$.  
 \end{theorem}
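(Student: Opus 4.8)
The plan is to construct $\mu_R$ by an equidistribution procedure and then to read off all of its properties from a single convergence statement. First I would record the elementary duality underlying the Lyubich operator: $L_R$ is positive, unital, and of unit norm on $C(\overline{\C})$, so its adjoint $L_R^*$ maps the weak-$*$ compact convex set of probability measures into itself continuously. A one-line computation gives $L_R(\psi\circ R)=\psi$ for every $\psi$, since each fibre carries $\deg R$ preimages counted with multiplicity; consequently any fixed measure $\mu$ of $L_R^*$ satisfies $R_*\mu=\mu$, and testing $L_R^*\mu=\mu$ against bumps concentrated at a point $q$ yields the local balance relation
\[
\mu(\{q\})=\frac{\deg_q R}{\deg R}\,\mu(\{R(q)\}),
\]
where $\deg_q R$ denotes the local degree. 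Existence of at least one fixed probability measure is then immediate from the Markov--Kakutani fixed point theorem, or from a Cesàro limit of $L_R^{*n}\delta_a$.

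The analytic heart is the equidistribution statement: for every non-exceptional base point $a$ the normalized preimage sums $L_R^{*n}\delta_a=\deg(R)^{-n}\sum_{R^n(x)=a}\delta_x$ converge weak-$*$ to a measure $\mu_R$ independent of $a$; equivalently, $L_R^n\phi\to\big(\int\phi\,d\mu_R\big)\chi_{\overline{\C}}$ uniformly on compact subsets of the complement of the exceptional set $\mathcal{E}$ (the at most two totally ramified periodic points), for every $\phi\in C(\overline{\C})$. This is precisely the content of Lyubich's theorem, and it is the step I expect to be the main obstacle: for polynomials it follows from Brolin's potential theory via the Green's function, while in general it requires Lyubich's careful control of how iterated preimages spread and of the influence of the critical orbits. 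I would take this convergence as the key input.

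Granting the convergence, the remaining assertions fall out. For \emph{non-atomicity}, the balance relation shows that atom masses are non-decreasing along forward orbits and can stabilise only at totally ramified points; since $\mathcal{E}$ is finite and the construction equidistributes preimages of a point outside $\mathcal{E}$, one gets $\mu_R(\mathcal{E})=0$ and hence $\mu_R$ charges no point. For \emph{uniqueness} among non-atomic functionals, suppose $\nu$ is non-atomic with $L_R^*\nu=\nu$; then $\nu(\mathcal{E})=0$, so applying dominated convergence to $\int\phi\,d\nu=\int L_R^n\phi\,d\nu$ and using the pointwise limit off $\mathcal{E}$ (with the bound $\|L_R^n\phi\|\le\|\phi\|$) gives $\int\phi\,d\nu=\|\nu\|\int\phi\,d\mu_R$ for all $\phi$, i.e. $\nu$ is a scalar multiple of $\mu_R$. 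The atomic invariant functionals that survive (such as $\tfrac12(\delta_0+\delta_\infty)$ for $z\mapsto z^{\deg R}$) are exactly what the non-atomicity hypothesis excludes. \emph{Ergodicity} follows because the uniform convergence $L_R^n\phi\to\int\phi\,d\mu_R$ yields mixing of $(R,\mu_R)$, and mixing implies ergodicity.

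Finally, for the \emph{maximal-entropy} claim I would combine the variational principle with the identity $h_{\mathrm{top}}(R)=\log\deg R$ (Misiurewicz--Przytycki for the lower bound, Gromov/Lyubich for the upper bound), and compute $h_{\mu_R}(R)=\log\deg R$ from Rokhlin's formula, since the balance relation forces the measure-theoretic Jacobian of $R$ to equal $\deg R$ almost everywhere. This makes $\mu_R$ attain the topological entropy, so $\mu_R$ is the non-atomic, ergodic, balanced measure of maximal entropy, unique in the stated sense.
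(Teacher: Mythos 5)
The paper does not prove this statement at all: Theorem~\ref{th.Lyubich} is introduced with the words ``we reformulate the main results of \cite{LyubichErgodicTheory} as follows,'' i.e.\ it is quoted from Lyubich's work and used as a black box throughout Section~3. So there is no in-paper argument to compare yours against; what can be assessed is whether your outline is a faithful and internally sound account of the standard proof. It is, with one caveat you yourself flag: everything is reduced to the equidistribution theorem $L_R^{*n}\delta_a\to\mu_R$ for $a$ outside the exceptional set, and that statement \emph{is} the substance of Lyubich's theorem, not an input to it. As written, your text is therefore a (correct) derivation of the corollaries of equidistribution rather than a proof of the theorem; since the paper itself only cites Lyubich, this is an acceptable level of detail here, but it should be presented as such.

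The reductions themselves check out. The duality $L_R(\psi\circ R)=\psi$ and the local balance relation $\mu(\{q\})=\frac{\deg_qR}{\deg R}\mu(\{R(q)\})$ are correct, and they do force any atom of an $L_R^*$-fixed measure to sit on an exceptional cycle (a point of maximal atomic mass must be totally ramified, hence its forward orbit is a totally ramified cycle); combined with $\mu_R(\mathcal{E})=0$ this gives non-atomicity. Uniqueness via $\int\phi\,d\nu=\int L_R^n\phi\,d\nu\to\nu(\overline{\C})\int\phi\,d\mu_R$ by dominated convergence off $\mathcal{E}$ is exactly right, and is the cleanest way to obtain the precise uniqueness statement the paper needs (invariant functionals generated by non-atomic measures are multiples of $\mu_R$). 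Mixing from $\int(\phi\circ R^n)\psi\,d\mu_R=\int\phi\,L_R^n\psi\,d\mu_R\to\int\phi\,d\mu_R\int\psi\,d\mu_R$ is fine, and the entropy computation via Rokhlin's formula (constant Jacobian $\deg R$, using that the finite critical set is $\mu_R$-null by non-atomicity) together with $h_{\mathrm{top}}(R)=\log\deg R$ is the standard route. In short: no error, but the one step you defer is the theorem's analytic core, so the proposal should be read as an exposition of how the stated properties follow from equidistribution, with the equidistribution itself credited to \cite{LyubichErgodicTheory}, exactly as the paper does.
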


The support of $\mu_R$ coincides with the set $J(R)$, the \textit{Julia set} of $R$.
Observe that the Lyubich operator is well defined for every  bran\-ched self-covering of the Riemann sphere of finite degree. 

\begin{definition}
 Let $f:\overline{\C}\rightarrow \overline{\C}$ be a branched covering of finite degree. We call the correspondence $\rho:f\mapsto L_f$ the \textit{Lyubich representation}.
\end{definition}

Lyubich representation gives a homomorphism from the whole semigroup of finite degree branched self-coverings of $\overline{\C}$ into $End(C(\overline{\C}) )$, the semigroup of continuous linear endomorphisms of the space of continuous functions on $\overline{\C}$.

Note that the uniqueness statement in Theorem \ref{th.Lyubich} is, in general, false for  non-holomorphic branched self-coverings (see discussion after Theorem \ref{th.RABranched}).
We call a  complex valued measure $\nu$ a \textit{Lyubich measure} for a semigroup $S$ generated by a collection of finite degree branched self-coverings of $\overline{\C}$ whenever $\nu$ induces an $L_f$-invariant functional for every $f\in S.$

Now, let us discuss the Ruelle representation of rational maps. 

\begin{definition}
 Let $R$ be a rational map, then the operator $$R_*(\phi)(y)=\sum_{R(x)=y}\frac{\phi(x)}{(R'(x))^2}$$ is called the \textit{Ruelle transfer operator} or the  \textit{Ruelle operator} .  
\end{definition}

Ruelle operator acts on the space $L_1(\overline{\C})$ with $\|R_*\|\leq 1$.  The operator $B_R(\phi)=\phi(R)\frac{\overline{R'}}{R'}$ is called the \textit{Beltrami operator}, which is a continuous endomorphism of $L_\infty(\overline{\C})$ with unitary norm. The space $Fix(B_R)$ of fixed points of $B_R$ is called the space of \textit{invariant Beltrami differentials} of $R$. In other words, the form 
$\phi(z)\frac{\overline{\partial}z}{\partial z}$ is invariant under the pull-back action of $R$ whenever $\phi\in Fix(B_R).$ By Ahlfors-Bers theorem, the space of invariant Beltrami differentials generates all quasiconformal deformations of the map $R.$

The relevance of Ruelle operator comes from the following lemma (see  for example \cite{CMFixed} and \cite{MakRuelle}). 

\begin{lemma} The Beltrami operator $B_R$ is dual to the Ruelle operator $R_*$.
\end{lemma}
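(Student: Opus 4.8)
The plan is to realize $B_R$ as the adjoint of $R_*$ under the natural bilinear pairing between $L_1(\overline{\C})$ and $L_\infty(\overline{\C})$, and then to verify the adjoint relation by a direct change of variables along the branched covering $R$. Throughout I identify an element $\psi\in L_1(\overline{\C})$ with the integrable quadratic differential $\psi\,dz^2$ and an element $\mu\in L_\infty(\overline{\C})$ with the Beltrami differential $\mu\,\frac{d\bar z}{dz}$; the product $\psi\mu\,dz\wedge d\bar z$ is then a well-defined $(1,1)$-form on $\overline{\C}$, and the duality pairing is
\[
\langle \psi,\mu\rangle=\int_{\overline{\C}}\psi(z)\,\mu(z)\;dz\wedge d\bar z,
\]
which is finite since $\psi\in L_1$ and $\mu\in L_\infty$. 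Under this identification the claim that $B_R$ is dual to $R_*$ means precisely that $\langle R_*\psi,\mu\rangle=\langle\psi,B_R\mu\rangle$ for all $\psi\in L_1(\overline{\C})$ and all $\mu\in L_\infty(\overline{\C})$.

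First I would expand the left-hand side using the definition of the Ruelle operator, obtaining
\[
\langle R_*\psi,\mu\rangle=\int_{\overline{\C}}\Bigl(\sum_{R(x)=y}\frac{\psi(x)}{(R'(x))^2}\Bigr)\mu(y)\;dy\wedge d\bar y.
\]
Next I would change variables from $y$ to $x$ via $y=R(x)$ on each local sheet of the covering. Away from the finite set of critical points, $R$ is a local biholomorphism, so $dy\wedge d\bar y=R'(x)\,\overline{R'(x)}\,dx\wedge d\bar x$, and integrating once over $y$ while summing against all its preimages is the same as integrating once over $x$ across the whole sphere. The weight $(R'(x))^{-2}$ coming from $R_*$ then combines with the Jacobian $R'(x)\overline{R'(x)}$ to leave exactly $\overline{R'(x)}/R'(x)$, yielding
\[
\langle R_*\psi,\mu\rangle=\int_{\overline{\C}}\psi(x)\,\mu(R(x))\,\frac{\overline{R'(x)}}{R'(x)}\;dx\wedge d\bar x=\langle\psi,B_R\mu\rangle,
\]
since $(B_R\mu)(x)=\mu(R(x))\,\overline{R'(x)}/R'(x)$ by definition. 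This is the desired identity.

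The step requiring care, and the only real obstacle, is the change of variables for the branched covering. One must check that the critical values, being a finite and hence measure-zero set, contribute nothing to the integral, and that over the regular part the preimage sum assembles correctly, sheet by sheet, into a single integral over the source sphere; Fubini together with $\psi\in L_1$ and $\mu\in L_\infty$ justifies interchanging the finite sum with the integral and guarantees absolute convergence. Once this bookkeeping of sheets and Jacobians is set up, the cancellation $(R')^{-2}\cdot R'\,\overline{R'}=\overline{R'}/R'$ is purely algebraic and produces the Beltrami factor automatically. It is exactly this cancellation that explains, conceptually, why the transfer operator weighted by $(R')^{-2}$ is dual to the pull-back action on Beltrami differentials.
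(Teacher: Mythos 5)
Your argument is correct: the change of variables $y=R(x)$ sheet by sheet turns the weight $(R'(x))^{-2}$ times the Jacobian $|R'(x)|^2$ into exactly the Beltrami factor $\overline{R'(x)}/R'(x)$, and the critical set is negligible, so $\langle R_*\psi,\mu\rangle=\langle\psi,B_R\mu\rangle$ follows. The paper itself does not prove this lemma but defers to the references \cite{CMFixed} and \cite{MakRuelle}, where the standard proof is precisely this direct computation; your write-up, including the justification of the sum--integral interchange via $\psi\in L_1$, $\mu\in L_\infty$, is a complete and correct substitute.
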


Let us note that both the Beltrami and the Ruelle operators can be extended to almost everywhere differentiable branched self-coverings of the Riemann sphere. 

\subsection{Relations and functional equations on rational maps}

 The following theorem was proven by Ritt in \cite{Rittper} and completed by Eremenko in \cite{EreFunc}.

\begin{theorem}\label{th.RittEremenko}
 Let $S\subset Rat$ be an abelian semigroup of rational maps. Assume that $S$ contains a non-exceptional element $R$ with $deg(R)\geq 2$. Then for every pair of elements $P,Q\in S$ with $deg(P),deg(Q)\geq 2$ there are numbers $m,n$ such that $P^m=Q^n.$
\end{theorem}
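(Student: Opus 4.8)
The plan is to deduce the statement from the classical dichotomy for a single commuting pair, namely the \emph{lemma}: if $g$ is a rational map of degree at least $2$ that commutes with a non-exceptional rational map $f$ of degree at least $2$, then $g$ and $f$ share a common iterate, $g^{a}=f^{b}$ for suitable positive integers $a,b$. Granting this lemma, the theorem is pure iteration algebra. Since $S$ is abelian, both $P$ and $Q$ commute with $R$; applying the lemma to the pairs $(P,R)$ and $(Q,R)$ yields integers with $P^{a}=R^{b}$ and $Q^{c}=R^{d}$. Raising the first relation to the power $d$ and the second to the power $b$ gives $P^{ad}=R^{bd}$ and $Q^{cb}=R^{db}$, whence $P^{ad}=Q^{cb}$; setting $m=ad$ and $n=cb$ completes the proof.

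All the content therefore sits in the lemma, which is the Fatou--Julia--Ritt theory completed by Eremenko. I would first reduce to invariants intrinsic to the common dynamics. A commuting pair $f,g$ of degree at least $2$ has a common Julia set, $J(f)=J(g)=:J$: indeed $g$ permutes the repelling periodic points of $f$ and is continuous, so $g(J)=J$, and the symmetric argument forces the two Julia sets to coincide. Moreover the two maps share the measure of maximal entropy. Since $f$ and $g$ commute, so do their Lyubich operators, $L_{f}L_{g}=L_{g}L_{f}$, hence so do their adjoints; as $L_{f}$ fixes constants and $\mu_{g}$ is $L_{g}$-invariant, the measure $L_{f}^{*}\mu_{g}$ is a non-atomic $L_{g}$-invariant probability, so by the uniqueness clause of Theorem \ref{th.Lyubich} it equals $\mu_{g}$. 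Thus $\mu_{g}$ is $L_{f}$-invariant, and applying the same uniqueness clause to $f$ gives $\mu_{f}=\mu_{g}=:\mu$.

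With $f,g$ sharing $J$ and $\mu$, I would exploit rigidity to force the common iterate. After replacing $f$ and $g$ by suitable iterates (which does not affect the conclusion), choose a repelling periodic point $p\in J$ that is fixed by both. In the Koenigs linearizing coordinate $\psi$ of $f$ at $p$ the commuting relation turns $g$ into a germ commuting with multiplication by the multiplier of $f$, and analysis of the induced functional equation on the linearizer shows that either $\psi$ conjugates $g$ to an iterate of the linear model of $f$, giving $g^{a}=f^{b}$, or $\psi$ extends to a global semiconjugacy onto an affine self-map of $\C$, of $\C^{*}$, or of an elliptic curve, i.e.\ $f$ is exceptional. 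Since $f=R$ is non-exceptional, only the first alternative survives.

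I expect the main obstacle to be precisely the step ruling out the exceptional alternative in full generality. The linearization argument is clean only when a convenient common repelling fixed point with controlled multiplier is available, and the delicate part --- which is exactly Eremenko's contribution --- is to treat commuting pairs lacking such a point by passing to the universal cover of the complement of the (orbifold) exceptional set and showing that the developing map simultaneously conjugates $f$ and $g$ to affine maps, from which the three exceptional models and the common-iterate alternative are read off. A secondary point to handle carefully is the logical placement of the hypothesis: the dichotomy produces either a common iterate or a pair that is simultaneously of power, Chebyshev, or Latt\`es type, and one must observe that the latter would make $R$ itself exceptional, contradicting the assumption, so that the common-iterate branch is forced.
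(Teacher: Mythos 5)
The paper does not prove this statement at all: it is imported verbatim as a classical result, with the attribution ``proven by Ritt in \cite{Rittper} and completed by Eremenko in \cite{EreFunc}.'' So there is no in-paper argument to compare against, and the honest assessment of your proposal is that it reduces the theorem correctly but does not close the part that actually carries the content. Your reduction of the semigroup statement to the pairwise lemma is clean and correct: commutativity gives $P^{a}=R^{b}$ and $Q^{c}=R^{d}$, hence $P^{ad}=R^{bd}=Q^{cb}$. Your derivation that commuting maps share the measure of maximal entropy, via $L_{f}L_{g}=L_{g}L_{f}$ and the uniqueness clause of Theorem \ref{th.Lyubich}, is also a genuinely nice self-contained argument that fits the paper's own toolkit (one should check that $L_{f}^{*}\mu_{g}$ is again a non-atomic probability measure, which holds since $f$ has finite fibers and $L_{f}\chi_{\overline{\C}}=\chi_{\overline{\C}}$).

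The gap is the pairwise dichotomy itself. The steps ``analysis of the induced functional equation on the linearizer shows that either $\psi$ conjugates $g$ to an iterate of the linear model of $f$ \dots or $\psi$ extends to a global semiconjugacy onto an affine model'' and the subsequent universal-cover/orbifold argument for pairs lacking a common repelling fixed point are precisely Ritt's theorem and Eremenko's completion; you name them and describe their shape, but you do not prove them, and you say so yourself. As a standalone proof the proposal is therefore incomplete at its core, although the missing piece coincides exactly with what the paper also outsources to \cite{Rittper} and \cite{EreFunc}. A secondary remark: an alternative route to the pairwise lemma that stays entirely inside the paper's quoted machinery is available once you have $\mu_{f}=\mu_{g}$ --- Theorem \ref{th.Levin-Przyticki} gives iterates $F=f^{n}$, $G=g^{m}$ satisfying the Levin relations, and commutativity then yields $F\circ G=F^{2}=G\circ F=G^{2}$, i.e.\ a common iterate directly; but this still requires knowing that $g$ is non-exceptional (which must be argued, e.g.\ via Zdunik's theorem as you would for the exceptional alternative), so it relocates rather than removes the difficulty.
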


\begin{definition} 
 We say that the rational maps $Q, R$ satisfy the \textit{Levin relations} if $$Q\circ R=Q\circ Q$$ and $$R\circ Q=R\circ R.$$
\end{definition}
The following theorem is proved in \cite{LevinPrytyckiRel} and \cite{Levinrelations}, we present it as formulated by H. Ye in \cite{Ye}.

\begin{theorem}\label{th.Levin-Przyticki}
Two non-exceptional rational maps $Q$ and $R$ share the same measure of maximal entropy if and only if there are numbers $m, n$ such that $Q^m$ and $R^n$ satisfy  the Levin relations.
\end{theorem}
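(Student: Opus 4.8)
The plan is to treat the two implications separately, using throughout the characterization of the measure of maximal entropy $\mu_Q$ (resp.\ $\mu_R$) supplied by Theorem~\ref{th.Lyubich}: it is the unique non-atomic probability measure whose associated functional is invariant under the Lyubich operator $L_Q$ (resp.\ $L_R$). I will also use two structural facts about the Lyubich representation. First, it is a homomorphism, $L_{f\circ g}=L_f\circ L_g$, which follows by splitting the preimage set $\{(f\circ g)(x)=y\}$ according to the intermediate value $w$, into $\{f(w)=y\}$ and $\{g(x)=w\}$. Second, $\mu_f$ is invariant under passing to iterates: since $L_{f^k}=L_f^{\,k}$, the functional of $\mu_f$ is fixed by $L_{f^k}^*$, and being non-atomic it must equal $\mu_{f^k}$ by the uniqueness clause; hence $\mu_{Q^m}=\mu_Q$ and $\mu_{R^n}=\mu_R$ for all $m,n$. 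Consequently it suffices to prove that, writing $A=Q^m$ and $B=R^n$, the Levin relations for $A,B$ are equivalent to $\mu_A=\mu_B$.

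For the backward implication, suppose $A\circ B=A\circ A$ and $B\circ A=B\circ B$. Applying the homomorphism property to the first relation gives the operator identity $L_A L_B=L_A^2$ on $C(\overline{\C})$. Let $\mu_A$ denote the functional fixed by $L_A^*$, i.e.\ $L_A^*\mu_A=\mu_A$. Dualizing $L_A L_B=L_A^2$ yields $L_B^*L_A^*=(L_A^*)^2$, and applying both sides to $\mu_A$ gives $L_B^*\mu_A=L_A^*\mu_A=\mu_A$. Thus the non-atomic probability measure $\mu_A$ represents an $L_B$-invariant functional, so the uniqueness in Theorem~\ref{th.Lyubich} forces $\mu_A=\mu_B$. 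Combined with $\mu_A=\mu_Q$ and $\mu_B=\mu_R$, this gives $\mu_Q=\mu_R$. (Taking degrees in $A\circ B=A\circ A$ already forces $\deg A=\deg B$, consistently.)

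The forward implication is the substantial direction. Here I would pass to the potential-theoretic description of the common measure: equality $\mu_Q=\mu_R=:\mu$ forces the same support $J$ and, after normalization, a single logarithmic/Green potential $G$ satisfying the simultaneous functional equations $G\circ Q=(\deg Q)\,G$ and $G\circ R=(\deg R)\,G$ (for polynomials $G$ is the Green function of the basin of infinity; for rational maps one works with the homogeneous Green function of homogeneous lifts on $\C^2\setminus\{0\}$). The aim is to extract a discrete relation from this analytic coincidence: the equations show that $Q$ and $R$ preserve the same gradient foliation and level structure of $G$, so they induce two expanding maps on the circle of external directions sharing a common invariant measure. Comparing these boundary maps, one selects iterates equalizing the degrees, $(\deg Q)^m=(\deg R)^n$, and shows that $A=Q^m$ and $B=R^n$ differ only by a symmetry of $G$, which upon post-composition collapses into $A\circ B=A\circ A$ and, symmetrically, $B\circ A=B\circ B$.

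The main obstacle is precisely this extraction, and two points make it delicate. First, a single measure can be the measure of maximal entropy of two maps with \emph{multiplicatively independent} degrees --- for instance $z^2$ and $z^3$ both have Haar measure on the unit circle as their maximal measure --- so equalizing degrees is impossible in general; the non-exceptional hypothesis is exactly what excludes these rigid families ($z^n$, Chebyshev, Latt\`es) and guarantees commensurable degrees. Second, controlling how the critical and branching data of $Q$ and $R$ are forced to align requires genuine rigidity input, namely the absence of invariant line fields away from the Latt\`es case together with the analysis of symmetry groups of $J$ carried out in \cite{LevinPrytyckiRel} and \cite{Levinrelations}. I would therefore complete the forward direction by invoking that rigidity rather than by direct computation, with the potential-theoretic functional equations above serving as the bridge from the measure-theoretic hypothesis to the algebraic relations.
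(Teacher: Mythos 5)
The paper does not actually prove this statement: Theorem~\ref{th.Levin-Przyticki} is imported verbatim from \cite{LevinPrytyckiRel} and \cite{Levinrelations}, in the formulation of \cite{Ye}, and is used as a black box throughout. So there is no internal proof to compare yours against; the honest baseline is ``cite Levin--Przytycki.''

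Measured against that, your backward implication is a genuine gain: the argument via the covariance $L_{A\circ B}=L_A\circ L_B$, dualizing $L_AL_B=L_A^2$ to get $L_B^*\mu_A=\mu_A$, and then invoking the uniqueness clause of Theorem~\ref{th.Lyubich} for non-atomic invariant measures, is correct and complete (the reduction $\mu_{Q^m}=\mu_Q$ via $L_{Q^m}=L_Q^m$ is also fine, and you correctly need only one of the two Levin relations here). Your observation that non-exceptionality is what rules out pairs like $z^2,z^3$ with multiplicatively independent degrees --- for which no iterates can satisfy the Levin relations since those force $\deg Q^m=\deg R^n$ --- is an accurate account of why the hypothesis is needed.

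The forward implication, however, is not a proof as written. The step from the simultaneous functional equations for the potential $G$ to the conclusion that $Q^m$ and $R^n$ ``differ only by a symmetry of $G$, which upon post-composition collapses into'' the Levin relations is precisely the substantial content of Levin--Przytycki: it requires the commensurability of degrees, the analysis of the symmetry group of the Julia set, and the rigidity excluding invariant line fields outside the Latt\`es case. You acknowledge this and defer to \cite{LevinPrytyckiRel} and \cite{Levinrelations}, which is exactly what the paper itself does; but you should be explicit that your forward direction is a citation plus a heuristic outline, not an argument. As submitted, the proposal proves one direction and quotes the other --- acceptable given that the paper treats the whole theorem as quoted, but the sketch of the hard direction should not be mistaken for a derivation.
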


The following theorem is a consequence of Ritt's results given in \cite{RittEquiv}. 

\begin{proposition}[Ritt]\label{pr.Ritt}
 Let $F, A,B$ be rational maps satisfying the equation $$FA=FB,$$ 
 then either $deg(F)>deg(A)=deg(B)$ or $A$ and $B$ share a common right factor, that is, there are rational  maps $X, Y$ and $Z$ such that $$A=X\circ Z$$ and $$B=Y\circ Z.$$ 
\end{proposition}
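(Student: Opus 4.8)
The plan is to translate the functional equation into the geometry of the fibre product of $F$ with itself and then to read off a common right factor from a L\"uroth parametrization. First I would record that $FA=FB$ forces $\deg(F)\deg(A)=\deg(F)\deg(B)$, hence $\deg(A)=\deg(B)=:m$; write $d=\deg(F)$. If $A=B$ the second alternative holds trivially (take $Z=A$), and if $d=1$ then $A=F^{-1}FA=F^{-1}FB=B$, so I may assume $A\neq B$ and $d\ge 2$. Consider the algebraic curve $\Gamma=\{(u,v)\in\overline{\C}\times\overline{\C}:F(u)=F(v)\}$ together with the morphism $\Phi=(A,B)\colon\overline{\C}\to\overline{\C}\times\overline{\C}$, $z\mapsto(A(z),B(z))$. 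The equation $FA=FB$ says exactly that $\Phi$ maps into $\Gamma$; let $C=\overline{\Phi(\overline{\C})}$ be its image, an irreducible rational curve. Since $A\neq B$, the curve $C$ is not the diagonal $\Delta=\{u=v\}\subset\Gamma$.

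The key quantitative input comes from the first projection $\pi_1\colon\Gamma\to\overline{\C}$, $(u,v)\mapsto u$. For generic $u_0$ the fibre $\pi_1^{-1}(u_0)$ consists of the $d$ points $(u_0,v)$ with $F(v)=F(u_0)$, exactly one of which, namely $(u_0,u_0)$, lies on $\Delta$. Hence $\Delta$ absorbs one sheet of $\pi_1$, and the union of the remaining off-diagonal components of $\Gamma$ carries only $d-1$ sheets. As $C$ is an irreducible off-diagonal component, this yields $\deg(\pi_1|_C)\le d-1$. On the other hand $A=\pi_1\circ\Phi$, so $\deg(A)=\deg(\pi_1|_C)\cdot e$, where $e=\deg(\Phi\colon\overline{\C}\to C)$ is the degree of the parametrization.

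Finally I would convert $e$ into a factor. The function field of $C$ is $\C(A,B)=\C(A)\vee\C(B)\subseteq\C(z)$ and satisfies $[\C(z):\C(A,B)]=e$; by L\"uroth's theorem $\C(A,B)=\C(Z)$ for some rational $Z$ with $\deg(Z)=e$, and $A,B\in\C(Z)$ furnishes rational maps $X,Y$ with $A=X\circ Z$ and $B=Y\circ Z$. Thus if $e\ge 2$ then $Z$ is a nontrivial common right factor, the second alternative; and if $e=1$ then $\deg(A)=\deg(\pi_1|_C)\le d-1<d=\deg(F)$, the first alternative. The step I expect to be delicate is precisely the passage to the \emph{strict} inequality $\deg(F)>\deg(A)$: it rests entirely on the fact that the diagonal soaks up one sheet of $\pi_1$, so that any off-diagonal component covers the $u$-line at most $d-1$ times. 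To make this airtight one must verify that $\Delta$ appears with multiplicity one in $\Gamma$ (equivalently $F'\not\equiv 0$ along the diagonal, which holds in characteristic zero), so that the off-diagonal sheet count is genuinely $d-1$, and one must dispose of the degenerate cases $A=B$ and $d=1$ as above.
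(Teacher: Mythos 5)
Your argument is correct and complete. There is nothing in the paper to compare it against: Proposition \ref{pr.Ritt} is stated there as a consequence of Ritt's results in \cite{RittEquiv} and used as a black box, with no internal proof. What you have written is the standard modern argument via the fibre product and L\"uroth's theorem, and every step checks: multiplicativity of degrees gives $\deg(A)=\deg(B)$; once $A\neq B$ and $\deg(F)\ge 2$, the image curve $C=\overline{(A,B)(\overline{\C})}$ is an irreducible component of $\{F(u)=F(v)\}$ distinct from the diagonal; a generic fibre of $\pi_1$ meets the diagonal in exactly one point, so $\deg(\pi_1|_C)\le d-1$ (for this inequality you do not even need the multiplicity-one remark you flag as delicate --- a set-theoretic count over a $u_0$ avoiding the critical values of $F$ and the finite set $C\cap\Delta$ already suffices); and L\"uroth converts the parametrization degree $e$ into a right factor $Z$ with $\deg(Z)=e$, so $e\ge 2$ yields a nontrivial common inner factor while $e=1$ forces $\deg(A)=\deg(\pi_1|_C)\le d-1<\deg(F)$. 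Your reading of the second alternative as demanding $\deg(Z)\ge 2$ is the right one (with $Z=\mathrm{id}$ the statement would be vacuous), and it is the version the paper actually uses afterwards, e.g.\ in the remark that indecomposability of $Q$ forces the outer factors $X,Y$ to be M\"obius. For contrast, Ritt's own derivation runs through the monodromy group of $F$ and the block structure of its action on a generic fibre; your route is self-contained, needs only L\"uroth plus a bidegree count, and makes the source of the strict inequality $\deg(F)>\deg(A)$ --- the diagonal absorbing one sheet of $\pi_1$ --- completely transparent.
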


By Proposition \ref{pr.Ritt}, if $Q$ and $R$ satisfy the Levin relations, then $Q$ and $R$ share a right common factor. Moreover, if either $Q$ or $R$ is an indecomposable rational map then the rational maps $X,$ and $Y$ given in  Proposition \ref{pr.Ritt} must be M\"obius transformations.  Recall that a map $R$ is called \textit{indecomposable} if whenever we have  the equation $R=P\circ T$ then one of the factors, either $P$ or $T$ must be a M\"obius transformation.

Also note that the relations given in Proposition \ref{pr.Ritt} pose an obstacle to the left cancellation property (definitions and discussions are given below). 

\subsection{Intersection properties} 

Now let us introduce three intersection properties which will be discussed in this work.

\begin{definition}[Dynamical intersection property] Let $DIP\subset Rat\times Rat$ be the set consisting of the pairs of rational maps 
$Q$, $R$ for which there exists a point $z_0\in \overline{\C}$ with 
 $$ \#\{ \mathcal{O}_Q(z_0) \cap \mathcal{O}_R(z_0)\}=\infty,$$ where $\mathcal{O}_R(z)=\bigcup_{n\geq 0} R^n(z)$ denotes the forward orbit of $z$.
\end{definition}

\begin{definition}[Algebraic intersection property]
 Let $AIP\subset Rat\times Rat$ be the set of all pairs $(Q,R)$  sharing a common iteration.   
\end{definition}
Note that $AIP\subset DIP$. 
\begin{definition}[Ideal intersection property]
 The semigroup $S$ satisfies the \textit{left or right ideal intersection} property whenever every pair of principal left or right ideals $I,J$ in $S$ have non-empty intersection.
\end{definition}

The last property  is closely related to the problem of embedding a semigroup into a group. That is to specify under what circumstances a given semigroup $S$ is ``half'' of a group.

Let $\Gamma$ be a countable  group with a minimal set of generators $\langle \gamma_1,...,\gamma_n,...\rangle$, consider the subset $\Gamma_+$ of all words in the alphabet $\{\gamma_1,...\gamma_n,...\}$. Then $\Gamma_+$ forms a countable semigroup which is called the \textit{positive part} of $\Gamma.$ Note that 
$\Gamma$ is generated by $\Gamma_+$ and $(\Gamma_+)^{-1}$. A countable semigroup $S$ is \textit{embeddable into a group} if $S$ is isomorphic to the positive part of a group.

Recall that a semigroup $S$ is \textit{left cancellative} if for $a,b,c\in S$
the equation $ca=cb$ implies $a=b.$ An analogous definition applies for a \textit{right cancellative} semigroup $S$. For example, every semigroup generated by a set of surjective endomorphisms of a set $A$ is always right cancellative. In particular, every semigroup of rational maps is always right cancellative.

If $S$ is both left and right cancellative, then $S$ is called a \textit{cancellative} semigroup. For instance, any finitely generated free semigroup $S$ is cancellative and, even more, $S$ is embeddable into a finitely generated free group. 

The following theorem due to O. Ore  provides sufficient conditions for a semigroup to be embeddable into a group (see \cite{OreSem}).  

\begin{theorem}[Ore Theorem]\label{th.Ore}
 Let $S$ be a cancellative semigroup, then $S$ is embeddable into a group whenever $S$ satisfies either the left or right ideal intersection property.
\end{theorem}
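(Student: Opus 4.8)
The plan is to construct the classical group of fractions. I would first reduce to a single case. By the antiproduct construction (item 4 of the Basic Facts on Amenability), the opposite semigroup $(S,*)$ with $a*b=ba$ is cancellative and anti-isomorphic to $S$, and the principal right ideals of $(S,*)$ are exactly the principal left ideals of $S$. Since every group is anti-isomorphic to itself via $g\mapsto g^{-1}$, an embedding of $(S,*)$ into a group produces one for $S$. Hence I may assume $S$ satisfies the \emph{right} ideal intersection property, namely that any two elements admit a common right multiple: for all $a,b\in S$ there exist $x,y\in S$ with $ax=by$.

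Next I would build the candidate group $G$ as the set of formal right fractions ``$ab^{-1}$'', realized as equivalence classes on $S\times S$. I declare $(a,b)\sim(c,d)$ when there are $u,v\in S$ with $au=cv$ and $bu=dv$. Reflexivity and symmetry are immediate; transitivity is obtained by taking a common right multiple of the two intermediate elements produced by two given equivalences and chasing the resulting identities, so that the Ore condition does the aligning. I would then define multiplication: to form $[(a,b)]\cdot[(c,d)]$, use the common-right-multiple property to choose $x,y$ with $bx=cy$ (so that ``$b^{-1}c=xy^{-1}$''), and set the product equal to the class of $(ax,\,dy)$.

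The main verification, and the step I expect to be the genuine obstacle, is that this product is well defined: independent of the chosen common multiple $x,y$, and independent of the representatives of each class. For instance, if $x',y'$ also satisfy $bx'=cy'$, one takes $s,t$ with $xs=x't$, deduces $cys=cy't$, and applies \emph{left} cancellation of $c$ to get $ys=y't$, which yields $(ax,dy)\sim(ax',dy')$. Each such check is a finite diagram chase that repeatedly invokes the existence of common right multiples to put denominators on a common footing and then uses left and right cancellation to strip them away; associativity is a longer chase of the same nature. These routine but delicate computations are where both the cancellativity hypothesis and the ideal intersection hypothesis are used in an essential, intertwined way.

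Finally I would exhibit the group structure and the embedding. The classes $[(a,a)]$ all coincide (two elements $a,b$ have a common right multiple, giving $(a,a)\sim(b,b)$) and form a two-sided identity, while $[(b,a)]$ inverts $[(a,b)]$, so $G$ is a group generated by the fractions over $S$ and their inverses. The map $\iota\colon S\to G$, $a\mapsto[(ac,c)]$, is independent of $c$ by the Ore condition and is a homomorphism: computing $\iota(a)\iota(b)$ with $cx=bdy$ gives the class of $(acx,dy)=(abdy,dy)=\iota(ab)$. It is injective precisely because $S$ is cancellative: $\iota(a)=\iota(b)$ forces $cu=cv$ and $acu=bcv$ for some $u,v$, so left cancellation gives $u=v$ and then right cancellation gives $a=b$. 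Thus $\iota$ realizes $S$ as a subsemigroup generating $G$ together with its inverses, that is, as the positive part of a group, completing the proof.
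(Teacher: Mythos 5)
Your proposal is correct, but note that the paper does not actually prove this statement: it is quoted as Ore's classical theorem with a citation to \cite{OreSem}, so there is no internal proof to compare against. What you have written is precisely the standard construction of the group of right fractions $G=\{ab^{-1}\}$ realized as $(S\times S)/\!\sim$, and the key steps are all in order: the reduction to the right ideal intersection property via the opposite semigroup, the use of common right multiples to define and align products of fractions, and the use of left and right cancellation exactly where they are needed (stripping common factors in the well-definedness check and in the injectivity of $\iota$). The only point worth a remark is definitional: the paper phrases the hypothesis as non-empty intersection of principal ideals, which for $bS^1$ may produce a ``multiple'' equal to $b$ itself rather than an element of $bS$; this is harmless (adjoin an identity, or note every cancellative semigroup sits in a monoid) but you should say a word about it if you want the diagram chases to go through literally. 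Likewise, the paper's notion of ``embeddable'' is ``isomorphic to the positive part of a group,'' and your closing observation that $G=\iota(S)\iota(S)^{-1}$ is exactly what is needed to meet that definition. In short: a correct, self-contained proof of a result the paper only cites.
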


In fact, Ore Theorem  does not need the countability condition. As a consequence of Ore Theorem we have that every abelian semigroup  $S$ is embeddable into a group if and only if  $S$ is cancellative. Hence every abelian semigroup of rational maps is embeddable into a group.

 In order to apply Ore Theorem, we need either the right or the left ideal intersection property which is known for RA semigroups (see for example \cite{Klawe}). For sake of completeness we include it in the following lemma.
 
 \begin{lemma}\label{lm.LIP}
  If $S$ is an RA semigroup then $S$ satisfies the left ideal intersection property.
 \end{lemma}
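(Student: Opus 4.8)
The plan is to prove that a right-amenable semigroup $S$ satisfies the left ideal intersection property by translating the ideal condition into a statement about indicator functions in $L_\infty(S)$ and testing it against a right-invariant mean. Recall that a principal left ideal of $S$ is a set of the form $Sa = \{sa : s \in S\}$ (possibly together with $a$ itself, depending on whether $S$ has an identity). Two principal left ideals $Sa$ and $Sb$ have nonempty intersection precisely when there exist $s,t \in S$ with $sa = tb$. So the goal is to show that for every $a,b \in S$ the sets $Sa$ and $Sb$ meet.

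\medskip

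First I would fix a right invariant mean $M \in RIM(S)$, which exists by hypothesis, and recall that $M$ is positive, normalized so that $M(\chi_S)=1$, and invariant under the right action $r_s(\phi)(x) = \phi(xs)$. The key observation is that right-invariance controls how $M$ interacts with left ideals. Indeed, I would examine the indicator function $\chi_{Sa}$ of the left ideal $Sa$. The crucial computation is to understand $r_a(\chi_{Sa})$: for any $x \in S$ we have $r_a(\chi_{Sa})(x) = \chi_{Sa}(xa)$, and since $xa \in Sa$ always holds, $r_a(\chi_{Sa}) = \chi_S$, the constant function $1$. Applying $M$ and using both right-invariance and normalization gives
\[
M(\chi_{Sa}) = M(r_a(\chi_{Sa})) = M(\chi_S) = 1.
\]
Thus every principal left ideal has full measure under $M$.

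\medskip

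Once I know $M(\chi_{Sa}) = M(\chi_{Sb}) = 1$ for all $a,b$, the conclusion follows from positivity and finite additivity of $M$. Suppose for contradiction that $Sa \cap Sb = \emptyset$. Then $\chi_{Sa} + \chi_{Sb} = \chi_{Sa \cup Sb} \leq \chi_S$, so by positivity and linearity
\[
2 = M(\chi_{Sa}) + M(\chi_{Sb}) = M(\chi_{Sa} + \chi_{Sb}) \leq M(\chi_S) = 1,
\]
a contradiction. Hence $Sa \cap Sb \neq \emptyset$, which is exactly the left ideal intersection property.

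\medskip

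I expect the only delicate point to be the bookkeeping around whether $S$ is a monoid and the precise definition of the principal left ideal (whether one takes $Sa$ or $Sa \cup \{a\}$); in either convention the identity $r_a(\chi_{Sa}) = \chi_S$ persists because $xa$ lands in the ideal for every $x$, so the argument is robust. The substantive content is entirely in the single line $M(\chi_{Sa}) = M(r_a \chi_{Sa}) = 1$, which is the place where right-invariance of the mean is used, and everything else is elementary positivity. There is no real obstacle here; the lemma is essentially a reformulation of right amenability, and the proof is short.
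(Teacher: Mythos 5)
Your proof is correct and follows essentially the same route as the paper: the paper's one-line argument $\nu(\chi_S)\geq \nu(\chi_{SP})+\nu(\chi_{SQ})=2\nu(\chi_S)$ relies on exactly the identity you make explicit, namely $r_a(\chi_{Sa})=\chi_S$ and hence $M(\chi_{Sa})=M(\chi_S)=1$ by right-invariance. You have simply spelled out the step the paper leaves implicit; there is no substantive difference.
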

\begin{proof}
   If for $P, Q\in S$ we have  $SP\cap SQ= \emptyset$ then for every $r$-mean $\nu$ we have 
 $$\nu(\chi_S)\geq \nu(\chi_{SP}+\chi_{SQ})= \nu(\chi_{SP})+\nu(\chi_{SQ})=2\nu(\chi_S)$$ which is a contradiction.
\end{proof}
The following corollary is an immediate consequence of Ore Theorem and Lemma \ref{lm.LIP}.
\begin{corollary}\label{cr.ratemb}
 A semigroup $S$ of rational maps is embeddable into group whenever $S$ is left cancellative and RA.
\end{corollary}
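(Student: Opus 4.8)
The plan is to apply Ore's Theorem (Theorem \ref{th.Ore}), which requires exactly two hypotheses on $S$: that it be cancellative, and that it satisfy either the left or the right ideal intersection property. Both will follow almost immediately from the assumptions together with facts already recorded, so the proof amounts to verifying that the hypotheses of Ore's Theorem are met.

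First I would check that $S$ is cancellative. By assumption $S$ is left cancellative, so it remains to argue right cancellativity. Since every rational map is a surjective self-map of $\overline{\C}$, the semigroup $S$ is generated by surjective endomorphisms of the set $\overline{\C}$, and as noted earlier any such semigroup is automatically right cancellative. Hence $S$ is both left and right cancellative, i.e.\ cancellative.

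Next, because $S$ is RA, Lemma \ref{lm.LIP} gives directly that $S$ satisfies the left ideal intersection property: for any $P,Q\in S$ the principal left ideals $SP$ and $SQ$ meet. With cancellativity and the left ideal intersection property established, Ore's Theorem applies and yields that $S$ is embeddable into a group, which is the assertion.

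I do not expect any genuine obstacle in this corollary, since the two substantive inputs are supplied beforehand: the automatic right cancellativity of rational-map semigroups (from surjectivity on $\overline{\C}$) and the short mean-theoretic argument of Lemma \ref{lm.LIP} that converts right amenability into the left ideal intersection property. Once these are in place the statement is a pure assembly of the cited results, so the only thing to be careful about is matching the exact hypotheses demanded by Theorem \ref{th.Ore} — in particular confirming that Lemma \ref{lm.LIP} furnishes the \emph{left} ideal intersection property, which is indeed one of the two forms Ore's Theorem accepts.
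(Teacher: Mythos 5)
Your proof is correct and follows exactly the route the paper intends: the paper presents this corollary as an immediate consequence of Ore's Theorem and Lemma \ref{lm.LIP}, using the previously noted automatic right cancellativity of semigroups of rational maps. Nothing is missing.
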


\subsection{Ergodic actions}

Given an operator $T$ on a Banach space $X$,  the 
$n$-\textit{Ces\`aro averages} of $T$ are the operators $A_n(T)$ defined for 
$x\in 
X$ 
by $$A_n(T)(x)=\frac{1}{n}\sum_{i=0}^{n-1}T^i(x).$$

An operator $T$  on a Banach space $X$ is called  \textit{mean-ergodic} if $T$ 
is power-bounded, that is, $\|T^n\|\leq M$ for some number $M$ 
independent of $n$, and the Ces\`aro averages $A_n(T)(x)$ converge in  norm for 
every $x\in X$.

The following fact can be found, for example, 
in Krengel's book \cite{Krengel}.

 \noindent \textbf{Separation principle}. \textit{An operator $T$ is mean-ergodic if and only if $T$
satisfies the principle of separation of fixed points:}

\textit{If $x^*$ is a fixed point of 
$T^*$, where $T^*$ denotes the dual operator of $T$, then there exists  $y\in X$ a fixed point of $T$ such that  $\langle x^*,y\rangle \neq 0.$}

Recall that an operator $T$ acting on a Banach space $\mathcal{B}$ is called \textit{weakly almost periodic} if $\{T^n(f)\}$ is weakly sequentially precompact for every $f\in \mathcal{B}.$ The following theorem is due to I. Kornfeld and M. Lin \cite{KornfeldLin}.
\begin{theorem}\label{th.Lin}
 Let $T$ be a positive operator with $||T||\leq 1$  acting on $L_1(X,\mu)$ space. The operator $T$ is weakly almost periodic if and only if $T$ is mean-ergodic.  
\end{theorem}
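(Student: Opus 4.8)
The plan is to treat the two implications separately, since only one of them genuinely uses the $L_1$ structure and positivity. First I would prove that weak almost periodicity implies mean-ergodicity by a soft, purely Banach-space argument valid for any power-bounded operator. Since $\|T\|\le 1$, the operator is power-bounded, so the Ces\`aro averages $A_n(T)f$ are uniformly bounded. Fix $f\in L_1(X,\mu)$. If $\{T^nf\}$ is weakly sequentially precompact, then (Eberlein--Smulian) its weak closure is weakly compact, and by the Krein--Smulian theorem the closed convex hull $K$ of the orbit is weakly compact as well. Each average $A_n(T)f$ lies in $K$, so the sequence $\{A_n(T)f\}$ has a weak cluster point $h$. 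The classical mean-ergodic theorem (the concrete form of the Separation principle recalled above) then upgrades this weak cluster point to norm convergence $A_n(T)f\to h$, with $Th=h$. As $f$ was arbitrary, $T$ is mean-ergodic.

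For the converse, mean-ergodicity $\Rightarrow$ weak almost periodicity, I would exploit that weak compactness in $L_1$ is governed by the Dunford--Pettis theorem: a bounded family $\mathcal K\subset L_1$ is relatively weakly compact iff it is uniformly integrable, i.e. (a) $\sup_{g\in\mathcal K}\int_E|g|\,d\mu\to 0$ as $\mu(E)\to 0$, and (b) for every $\varepsilon>0$ there is a set $F$ of finite measure with $\sup_{g\in\mathcal K}\int_{X\setminus F}|g|\,d\mu<\varepsilon$. By positivity I may reduce to $f\ge 0$: splitting a general $f$ into real/imaginary and positive/negative parts and noting that conditions (a) and (b) are stable under finite sums and under uniform $L_1$-limits, uniform integrability of the orbit of each nonnegative piece yields it for $f$. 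Thus the whole problem becomes: show that for $0\le f\in L_1$, mean-ergodicity forces $\{T^nf\}$ to be uniformly integrable, so that the orbit is relatively weakly compact and $T$ is weakly almost periodic.

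The core, and the main obstacle, is that mean-ergodicity controls the averages $A_n(T)f$ but not the individual orbit elements $T^nf$; indeed $T^nf=(n+1)A_{n+1}(T)f-nA_n(T)f$, so norm convergence of averages is far from norm control of the orbit, and for non-positive operators the implication is outright false (the shift on $\ell_1$ lets mass escape and is neither mean-ergodic nor weakly almost periodic). Here positivity must enter decisively. I would split $T$ by the Hopf decomposition into its conservative part $C$ and its dissipative part $D$. On $C$ the Chacon--Ornstein and Hopf maximal ergodic theorems prevent mass from concentrating on small sets or from escaping to infinity, yielding conditions (a) and (b) for the part of the orbit carried by $C$. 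The delicate point is $D$: on a nontrivial dissipative part mass genuinely escapes, and I would argue that this escape produces a nonzero fixed point $x^*\in\operatorname{Fix}(T^*)\subset L_\infty$ (a bounded $T^*$-invariant function supported on the escaping mass) that is annihilated by every fixed point of $T$ in $L_1$; by the Separation principle this contradicts mean-ergodicity, forcing the dissipative part to carry no orbital mass in the limit. Establishing the precise link between escaping mass on $D$ and a separation-violating fixed point of $T^*$ is where the real work lies, and is the step I expect to be the hardest; once it is in place, both Dunford--Pettis conditions hold for $\{T^nf\}$ and the equivalence is complete.
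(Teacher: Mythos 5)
First, a point of comparison: the paper offers no proof of this statement at all --- it is quoted verbatim as a theorem of Kornfeld and Lin \cite{KornfeldLin} --- so your attempt can only be judged on its own merits. Your forward implication (weak almost periodicity implies mean-ergodicity) is correct and standard: weak precompactness of the orbit gives, via Krein--\v{S}mulian, weak compactness of the closed convex hull containing the Ces\`aro averages, a weak cluster point of $A_n(T)f$ is a fixed point, and the classical splitting $f-A_n(T)f\in (I-T)L_1$ upgrades this to norm convergence. This half does not use positivity or the $L_1$ structure, as you say.

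The converse, which is the actual content of the theorem, is not proved. There are two distinct problems. The first is that your treatment of the conservative part is wrong as stated: conservativity together with the Chacon--Ornstein and Hopf theorems does \emph{not} yield uniform integrability of the orbit $\{T^nf\}$. Take $T$ to be the transfer (Markov) operator of the simple random walk on $\mathbb{Z}$ with counting measure, or of any conservative ergodic infinite-measure-preserving transformation: the whole space is conservative, $\|T^n f\|_1=\|f\|_1$ for $f\ge 0$, yet the orbit of $\delta_0$ spreads out and fails the tightness condition (b) of Dunford--Pettis, so it is not relatively weakly compact. Such operators are of course not mean-ergodic (the constant $1$ lies in $\operatorname{Fix}(T^*)$ while $\operatorname{Fix}(T)=\{0\}$ in $L_1$), which is consistent with the theorem --- but it shows that mean-ergodicity must be invoked on the conservative part as well, e.g.\ to produce a strictly positive invariant density there with respect to which one can renormalize; your division of labor, with classical ergodic theorems handling $C$ and mean-ergodicity handling only $D$, cannot work. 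The second problem is that the step you yourself identify as ``where the real work lies'' --- converting escaping mass on the dissipative part into a fixed point of $T^*$ that violates the separation principle --- is left entirely unexecuted, and it is precisely the non-routine part of the Kornfeld--Lin argument. As it stands the hard implication is a plausible but partly incorrect plan rather than a proof.
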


\section{Lyubich representation}

We start with the following theorem.
\begin{theorem}\label{th.RABranched}
 Let $S$ be a semigroup of branched self-coverings of the sphere. If the Lyubich representation of $S$ is right amenable, then there exists a Lyubich probability measure for $S$. 
\end{theorem}

\begin{proof} Let $\rho:S\rightarrow End(C(\overline{\C}))$ be 
the Lyubich representation.
 Let $\sigma$ be a probability measure. Let  $H:C(\bar{\C})\rightarrow L_\infty(S)$ be 
 the map defined for $s\in S$ by $$H(\phi)(s)=\int_{\overline{\C}} \rho(s)(\phi(z))d \sigma(z).$$
 Since the characteristic function $\chi_{\overline{\C}}$ is a fixed element for every  Lyubich operator $\rho(s)$, then the closure of the image of 
$H$ is a subspace $L_\infty(S)$ containing 
the constant functions on $S.$ The space $X=\overline{im (H)}$ is invariant under the right action of $S.$ By assumption $X$ admits a non-zero  $r$-invariant mean $L$, then the  functional $\ell$ given by $$\ell(\phi)=L(H(\phi))$$ is continuous and positive on $C(\overline{\C})$. Let
us show that $\ell$ is invariant with respect to  $\rho(S).$ 

Indeed, for  $t\in S$
$$r_t(H(\phi))(s)=H(\phi)(st)=\int \rho(st)(\phi)d\sigma$$ $$=\int \rho(s)( \rho(t) (\phi))d\sigma=H(\rho(t)(\phi))(s).$$ Since $L$ is $r$-invariant then $$\ell(\phi)=L(H(\phi))=L(r_t(H( \phi)))$$
$$=L(H(\rho(t)(\phi)))=\ell(\rho(t)( \phi)).$$

By the Riesz representation theorem there exists a probability measure $\mu$ satisfying $\ell(\phi)=\int \phi d\mu $, since $\ell$ is $\rho(t)$ invariant then  $\mu$ is a Lyubich measure.\end{proof}

 The measure $\mu$ depends on the choice of the measure $\sigma$. For instance, consider  $\sigma=\delta_{z_0}$ the delta measure on a suitable point $z_0\in \overline{\C}$. If the cardinality of $\mathcal{O}_-(S)(z_0)$ is finite  then $\mu$ is atomic and, in fact, is a linear 
combination of delta measures based on $\overline{\mathcal{O}_-(S)(z_0)},$ where $$\mathcal{O}_-(S)(z_0)=\bigcup_{n\geq 0,s\in S} s^{-n}(z_0).$$  If $S<Pol$ is a polynomial semigroup then, choosing $z_0=\infty$, the measure $\delta_{\infty}$ is an atomic Lyubich measure.

If $\overline{\mathcal{O}_-(S)(z_0)}$ is infinite then $\mu$ may be non-atomic, as in the case of  cyclic semigroups of rational maps.

Even more, for  semigroups of non-holomorphic branched coverings of the sphere a non-atomic Lyubich measure may not be unique even for cyclic semigroups.  For example, if $f$ is a formal mating of two polynomials, say $P$ and $Q$, then the conformal copies of the measures of maximal entropy for $P$, $Q$ and $z^{deg P}$  generate a three dimensional space of Lyubich measures for $f$.  One can use the tuning procedure to construct a map with two dimensional space of Lyubich measures, hence by repeating the procedures of mating and tuning we can produce a multidimensional space of Lyubich measures. The definition of tuning and matings was given by A. Douady in section 2 and section 3 of \cite{Douady}.

From Theorem \ref{th.RABranched} we have the following corollary. 

\begin{corollary}
Let $S$ be an RA  semigroup of branched self-coverings  of the Riemann sphere. Then there exists a Lyubich probability measure for $S$. 
\end{corollary}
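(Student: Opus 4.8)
The plan is to derive this corollary directly from Theorem~\ref{th.RABranched} by the standard passage from right amenability of a semigroup to right amenability of one of its representations. First I would recall the relevant implication: if $S$ is an RA semigroup, then $S$ admits a right invariant mean $M$ on all of $L_\infty(S)$. The Lyubich representation $\rho:S\rightarrow End(C(\overline{\C}))$ is a bounded homomorphism, since each Lyubich operator $L_f$ has unit norm. Hence $\rho$ is a bounded representation of $S$, and the subspace $X_\rho\subset L_\infty(S)$ generated by the matrix coefficients $f_{(b,b^*)}$ together with the constant functions is both right and left invariant, as noted in the preliminaries.

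The key step is to observe that the ambient right invariant mean $M$ restricts to a right invariant functional on $X_\rho$. Since $X_\rho$ contains the constants and $M(\chi_S)=1$, the restriction $M|_{X_\rho}$ is a mean on $X_\rho$ that is invariant under the right action of $S$; by definition this means precisely that $X_\rho$ is a right amenable subspace, i.e. that $\rho$ is $\rho$-RA. In other words, right amenability of $S$ forces $\rho$-right-amenability for the Lyubich representation. This is exactly the hypothesis required by Theorem~\ref{th.RABranched}, whose statement is that $\rho$-right-amenability of the Lyubich representation yields a Lyubich probability measure for $S$.

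Having established that $S$ being RA implies its Lyubich representation is right amenable, I would then simply invoke Theorem~\ref{th.RABranched} to conclude that there exists a Lyubich probability measure for $S$. The whole argument is therefore a one-line deduction: RA $\Rightarrow$ $\rho$-RA for the Lyubich representation $\Rightarrow$ existence of a Lyubich measure. One could also phrase the middle implication as an instance of the Day Theorem (Theorem~\ref{th.Day}), which asserts that an RA semigroup is $\rho$-RA for \emph{every} bounded representation $\rho$; applied to the Lyubich representation this gives the required $\rho$-right-amenability immediately.

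The step I expect to require the most care is the verification that the restriction of a right invariant mean on $L_\infty(S)$ to the invariant subspace $X_\rho$ genuinely defines an invariant functional in the sense demanded by the definition of an amenable subspace, rather than a triviality. Concretely, one must check that $X_\rho$ is $S$-invariant (so that the restricted action is well defined) and that $M$ restricted there remains a mean, i.e. positive, normalized on the constants, and annihilated by the difference $r_s(\phi)-\phi$ for all $\phi\in X_\rho$ and $s\in S$. All of these follow from the definitions, so there is no real obstacle; the content of the corollary lies entirely in Theorem~\ref{th.RABranched}, and the corollary is essentially a packaging of that theorem with the elementary fact that right amenability implies $\rho$-right-amenability.
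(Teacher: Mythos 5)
Your proposal is correct and is exactly the paper's intended argument: the paper states the corollary as an immediate consequence of Theorem~\ref{th.RABranched}, having already noted that right amenability of $S$ implies $\rho$-right-amenability for every bounded representation (Day's theorem, or equivalently restricting a global right invariant mean to the invariant subspace $X_\rho$), applied to the Lyubich representation. Your elaboration of the restriction step is a faithful unpacking of what the paper leaves implicit.
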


For semigroups of rational maps we have  Theorem \ref{th.rhorightam} which gives a stronger conclusion. For this we need the following.

\begin{definition}
We say that a collection of rational maps $\mathcal{F}$ is called \textit{admissable} if it contains a non exceptional element $g$ and either 
\begin{itemize}
 \item  $g$ is not M\"obius conjugated to a polynomial, or 
 
 \item $g$ is conjugated to a polynomial but there exists another element $g_1\in \mathcal{F}$ such that there is no M\"obius map $\gamma$ simultaneously conjugating $g$ and $g_1$ to  polynomials, or  
 
 \item $\mathcal{F}$ consists of rational maps  simultaneously M\"obius conjugated to a family of polynomials
 $\mathcal{F}'$ and there exists a disk $D_\mathcal{F}$ centered at $\infty$ such that $P(D_\mathcal{F})\subset D_\mathcal{F}$ for every $P\in \mathcal{F}'$.
\end{itemize}

\end{definition}

In other words, a family $\mathcal{F}$ containing a non exceptional map is admissable if either there is no point $z_0\in \overline{\C}$ with $R^{-1}(z_0)=\{z_0\}$ for every $R\in \mathcal{F}$, or, otherwise, there exists a common $R$-invariant topological disk around such $z_0$ for every $R\in \mathcal{F}$. 

Every finite collection of non-linear polynomials containing a non-exceptional polynomial is admissable, even more every collection of monic non-linear polynomials with bounded coefficients and containing a non-exceptional polynomial  is admissable. Finally,  a collection $\mathcal{F}$ is admissable if and only if the semigroup $S(\mathcal{F})$ is also  admissable.  Indeed, if $\mathcal{F}$ is admissable, then $S(\mathcal{F})$ is also admissable. Reciprocally, if $S(\mathcal{F})$ is admissable but $\mathcal{F}$ is not then $\mathcal{F}$ is M\"obius conjugated to polynomials and there is no disk $D$ such that $P(D)\subset D$ for all $P\in S(\mathcal{F})$ which contradicts $S(\mathcal{F})$ is admissable. 

We also need the following lemma.

 \begin{lemma}\label{lm.atomic}
Let $R$ be a non-injective rational map and $\nu$ be a probability Lyubich measure for $R$. Then $\nu=s \mu_R+(1-s) \omega$ with $s\in [0,1]$ where $\mu_R$ is the measure of maximal entropy and either $\omega=\delta_a$ with $R^{-1}(a)=a$ or $\omega=t\delta_a+(1-t)\delta_b$ with $t\in [0,1]$ and $R^{-1}(\{a,b\})=\{a,b\}$. 
 \end{lemma}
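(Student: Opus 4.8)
\textbf{Proof strategy for Lemma \ref{lm.atomic}.}

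The plan is to start from the characterization of Lyubich measures for a single rational map and decompose an arbitrary Lyubich probability measure $\nu$ into its non-atomic and atomic parts. By Theorem \ref{th.Lyubich}, the functional induced by $\nu$ on $C(\overline{\C})$ is $L_R$-invariant, so I would first write $\nu = s\,\nu_0 + (1-s)\,\nu_a$ with $s\in[0,1]$, where $\nu_0$ is the non-atomic part and $\nu_a$ the purely atomic part of $\nu$, both normalized to be probability measures. The key observation is that the Lyubich operator $L_R$ preserves the classes of atomic and non-atomic measures on the dual side: because $L_R$ averages over preimages, the pushforward-type action dual to $L_R$ sends a delta mass at $y$ to an average of delta masses at the preimages $R^{-1}(y)$, hence maps atomic functionals to atomic functionals and (one checks) non-atomic ones to non-atomic ones. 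Therefore the invariance of $\nu$ forces both $\nu_0$ and $\nu_a$ to be individually $L_R$-invariant.

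For the non-atomic part $\nu_0$, the uniqueness clause of Theorem \ref{th.Lyubich} applies directly: any non-atomic $L_R$-invariant functional is a multiple of $\mu_R$, so $\nu_0 = \mu_R$. The substance of the lemma is thus the analysis of the atomic part $\omega := \nu_a$. I would argue that an $L_R$-invariant atomic probability measure must be supported on a finite totally invariant set. Writing $\omega = \sum_i c_i \delta_{a_i}$ with $c_i>0$ summing to $1$, the invariance under the functional dual to $L_R$ translates into a linear system on the weights: the mass at each $a_i$ is redistributed to its $R$-preimages weighted by local degrees and divided by $\deg(R)$. Since $\omega$ is a finite-mass measure and $R$ is non-injective of degree $d\geq 2$, a point carrying positive atomic mass must have all its $R$-preimages and forward images again carry positive mass, which confines the support to a totally invariant set; and a classical fact for rational maps of degree at least two is that the only finite totally invariant sets are the exceptional sets, which have cardinality at most two. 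Thus $\operatorname{supp}(\omega)$ is a totally invariant set of at most two points.

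This reduces the problem to two cases. If the support is a single point $a$, then total invariance gives $R^{-1}(a)=\{a\}$ and $\omega=\delta_a$. If the support is two points $\{a,b\}$, then $R^{-1}(\{a,b\})=\{a,b\}$ and $\omega = t\delta_a+(1-t)\delta_b$; here I would solve the $2\times 2$ weight system coming from $L_R$-invariance to confirm that such a $t\in[0,1]$ exists and is consistent (in the fully symmetric case where $R$ swaps or fixes the two points, the balance equations determine the admissible weights). Combining, $\nu = s\,\mu_R + (1-s)\,\omega$ with $\omega$ of the stated form. The main obstacle I anticipate is the step establishing that the atomic part is forced onto a \emph{finite totally invariant} set rather than merely an $R$-invariant countable set: one must rule out atomic mass escaping to infinity along backward orbits, which is where finiteness of total mass together with the $1/\deg(R)$ normalization in $L_R$ must be used carefully to show the backward orbit of any atom cannot be infinite without violating $\omega(\overline{\C})=1$.
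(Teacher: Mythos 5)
Your argument is correct, but it takes a genuinely different route from the paper. The paper works with the support $A=\operatorname{supp}(\nu)$ of the whole measure: it first shows $R^{-1}(A)=A$, then uses a disjoint-preimages disk argument to exclude non-periodic, non-rotational Fatou points from $A$, concludes by normal families that $A\cap F(R)$ is a set of at most two periodic points, and finally invokes Theorem \ref{th.Lyubich} to identify the restriction of $\nu$ to $J(R)$ with a multiple of $\mu_R$. You instead split $\nu$ into its atomic and non-atomic parts first, observe that the dual of $L_R$ preserves both classes (so each part is separately invariant by uniqueness of the Lebesgue decomposition), dispose of the non-atomic part by the uniqueness clause of Theorem \ref{th.Lyubich}, and reduce the atomic part to the exceptional set. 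Your route avoids the Fatou/Julia case analysis and the normal-families step entirely, at the price of having to control the atoms by hand. The step you flag as the main obstacle --- finiteness of the atomic support --- closes exactly along the lines you indicate: invariance gives the balance relation $\omega(\{x\})=\frac{\deg_x(R)}{\deg(R)}\,\omega(\{R(x)\})$, so mass is non-decreasing along forward orbits; an atom of mass $c>0$ therefore has a forward orbit of at most $1/c$ points and is preperiodic; on a cycle of period $p$ the relation forces $\deg_a(R^p)=\deg(R^p)$, i.e.\ $R^{-p}(a)=\{a\}$, so every periodic atom is exceptional; and since the exceptional set $E$ (of cardinality at most two) satisfies $R^{-1}(E)=E$, a strictly preperiodic atom would lie in some $R^{-n}(E)=E$, a contradiction. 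Hence all atoms lie in $E$, which is precisely the dichotomy in the statement; I would recommend writing out this mass-monotonicity argument explicitly rather than leaving it as an anticipated difficulty, since it is the only place where your proof does real work.
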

 In other words, an $L_R$-invariant functional is presented by an atomic measure $\nu$ if and only if either $R$ or $R^2$ is M\"obius conjugated to a polynomial and the support of $\nu$ contains the point associated to $\infty$ as an atom of $\nu$. 
 
\begin{proof}
 Let us observe that the support of a measure $\nu$ is a completely invariant closed set. Indeed,   let $A=supp(\nu)$ and $\chi_A$ be its characteristic function. Since $\int \chi_A d\nu=\int L_R(\chi_A)d\nu$ and $L_R(\chi_A)\leq \chi_A$ $\nu$-almost everywhere,  then $L_R(\chi_A)=\chi_A$  $\nu$-almost everywhere.  Hence $R^{-1}(A)=A$. 
 
 Now, if $$W=F(R) \setminus \left[ \{\textnormal{periodic points}\}  \cup \{\textnormal{rotational domains}\}\right]$$ then $A\cap W=\emptyset$. Otherwise, there is a point $z_0\in A\cap W$ and a disk $D$ centered at $z_0$ with $\nu(D)>0$ so that  $R^{-n}(D)$ forms a pairwise disjoint family of open subsets of $W$,  for $n=1,2,3,...$. But this contradicts that $\nu$ is invariant.    Since $R^{-1}(A\cap F(R))=A\cap F(R)$, then  $A\cap F(R)$ consists of periodic points and, by a normal families argument, contains at most two points $\{a,b\}$. Then $\omega$, which is the atomic part of $\nu$, consists of delta measures based on the set $\{a,b\}$.   
 
If $A\cap J(R)=\emptyset$, then we are done. Otherwise, $A\cap J(R)=J(R)$ and, by Theorem \ref{th.Lyubich}, the restriction of $\nu$ on $J(R)$ is a multiple of the measure of maximal entropy which finishes the proof.  
\end{proof}

\begin{theorem}\label{nonatomicLyubich}
Let  $\mathcal{F}$ be an admissable family of  rational maps. Assume $S=S(\mathcal{F})$   admits a right amenable Lyubich representation, then there exists a unique non-atomic Lyubich measure $m_S$. Even more, $m_S=m_s$ for any $s\in S$ with $deg(s)>1$ where $m_s$ is the measure of maximal entropy of $s.$
\end{theorem}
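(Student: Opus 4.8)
The plan is to first extract some Lyubich probability measure for $S$, then split off its atomic part, and finally recognize what is left as the measure of maximal entropy. \textbf{Existence of a Lyubich measure.} Since the Lyubich representation of $S$ is right amenable, Theorem~\ref{th.RABranched} immediately provides a Lyubich probability measure $\nu$; equivalently $L_s^{*}\nu=\nu$ for every $s\in S$, where $L_s^{*}$ denotes the adjoint of the Lyubich operator acting on measures. Note $L_s^{*}$ is positive and preserves total mass, because $L_s$ is positive and fixes the constants.

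\textbf{Splitting off atoms.} Write $\nu=\nu_{na}+\nu_{a}$ for the (unique) decomposition of $\nu$ into its non-atomic and purely atomic parts. A fiberwise computation gives $L_s^{*}\nu(\{x_0\})=\frac{m}{\deg s}\,\nu(\{s(x_0)\})$, with $m$ the local degree of $s$ at $x_0$; hence $L_s^{*}$ sends non-atomic measures to non-atomic ones and atomic measures to atomic ones. By uniqueness of the decomposition, $L_s^{*}\nu_{na}=\nu_{na}$ and $L_s^{*}\nu_{a}=\nu_{a}$ for all $s$, so both $\nu_{na}$ and $\nu_{a}$ are Lyubich measures for $S$. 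This reduces the existence statement to proving $\nu_{na}\neq 0$, and it already settles uniqueness and the identification: by the uniqueness clause of Theorem~\ref{th.Lyubich}, any non-atomic Lyubich probability measure, being $L_s$-invariant and non-atomic, must equal $\mu_s=m_s$ for every $s\in S$ with $\deg s>1$ (such $s$ exists, e.g.\ the non-exceptional generator, which has degree at least $2$). In particular all non-atomic Lyubich measures coincide, giving a unique $m_S$ with $m_S=m_s$.

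\textbf{Non-atomicity via admissability.} It remains to produce a non-zero non-atomic Lyubich measure, and this is where admissability and the structure Lemma~\ref{lm.atomic} are used. Fix the non-exceptional element $g$. By Lemma~\ref{lm.atomic}, the support of any atomic Lyubich measure for $g$ lies in the totally invariant set of $g$; as $g$ is non-exceptional and non-injective this set is empty unless $g$ is conjugate to a polynomial, in which case it is a single point $\infty_g$ (a two-point totally invariant set would force $g$ conjugate to $z^{\pm d}$, hence exceptional). Thus in the first two bullets of the definition of admissability the atomic part must vanish: if $g$ is not conjugate to a polynomial the set is empty; and if some $g_1\in\mathcal{F}$ is not simultaneously conjugated with $g$ to polynomials, then $\infty_g$ is not totally invariant for $g_1$, so $c\,\delta_{\infty_g}$ can be $L_{g_1}$-invariant only for $c=0$. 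Hence $\nu_a=0$ and $\nu=\nu_{na}$ is the desired measure.

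\textbf{The disk case (main obstacle).} The hardest case is the third bullet, where all maps are simultaneously conjugate to polynomials and $\delta_{z_0}$, for the common totally invariant point $z_0$, is a genuine atomic Lyubich measure that cannot be excluded algebraically. Here I would build a non-atomic measure directly, re-running the construction of Theorem~\ref{th.RABranched} with $\sigma=\delta_{z_1}$ for some $z_1\notin D_{\mathcal{F}}$. Since $D_{\mathcal{F}}$ is forward invariant, every $s$-preimage of a point outside $D_{\mathcal{F}}$ stays outside $D_{\mathcal{F}}$; consequently $H(\phi)\equiv 0$ whenever $\phi$ is supported in $D_{\mathcal{F}}$, so the resulting Lyubich measure $\mu$ is supported in $\overline{\C}\setminus D_{\mathcal{F}}$ and assigns no mass to $z_0$. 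Its atomic part is supported on $\{z_0\}$ by the previous paragraph, hence is zero, so $\mu$ is a non-zero non-atomic Lyubich measure. I expect this disk case to be the crux, precisely because the atomic measure $\delta_{z_0}$ genuinely exists there and the geometric content of admissability---the common forward-invariant disk---is exactly what is needed to steer the constructed measure off the totally invariant point.
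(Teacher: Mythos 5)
Your proof is correct and follows essentially the same route as the paper: existence via Theorem \ref{th.RABranched}, identification of the atomic part through Lemma \ref{lm.atomic} and the non-exceptional element, uniqueness from Theorem \ref{th.Lyubich}, and the forward-invariant disk (via $s^{-1}(K)\subset K$) to kill the atom at the common totally invariant point in the polynomial case. The only difference is cosmetic: you construct the non-atomic measure directly from $\sigma=\delta_{z_1}$ with $z_1\notin D_{\mathcal{F}}$, whereas the paper reaches the same conclusion by contradiction using a test function supported off $D_{\mathcal{F}}$.
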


\begin{proof}
By Theorem \ref{th.RABranched}  and Lemma \ref{lm.atomic} there is a Lyubich measure $m_S$ given by a measure $\sigma=\delta_{y_0}$ for some $y_0\in \overline{\C}$. Since $\mathcal{F}$ contains a non-exceptional map $g$, then by Lemma \ref{lm.atomic}, the measure $m_S=\alpha(y_0) m_g+ \beta(y_0) \delta_{z_0}$, $\alpha(y_0)+\beta(y_0)=1$ for a suitable $z_0\in  \overline{\C}$ and    $m_g$ is the measure of maximal entropy of $g$. If $\beta(y_0)\neq 0$ the $z_0$ is determined by the condition $g^{-1}(z_0)=z_0$. 

If $\alpha(y_0)\neq0$ for a $y_0\in \overline{\C}$, we are done by Theorem \ref{th.Lyubich}, since $m_g$ is the claimed measure. 

Let us show that $\alpha(y_0)\neq 0$ for some $y_0$. Otherwise $\beta(y_0)=1$ for every $y_0\in \overline{\C}$ and $\mathcal{F}$ consists of polynomials, since by conjugation we can assume that $z_0=\infty$  because $s^{-1}(z_0)=z_0$ for every $s\in S$.  
Hence $\mathcal{F}$ satisfies the third case of the definition of admissability.

Thus, there exists a disk $D_\mathcal{F}$ such that $P(D_\mathcal{F})\subset D_\mathcal{F}$ for every $P\in S$. Let $K=\overline{\C}\setminus {D}_\mathcal{F}$ and take a compact subdisk $D\subset D_\mathcal{F}$ centered at $\infty$ and consider $\phi$ a continuous function such that $\phi(z)$ is $1$ for $z\in K$  and $0$  for $z \in D$.

Hence $$\int_{\overline{\C}} \phi(z) dm_S(z)=0.$$
Let  $x_0\in K$,  since $s^{-1}(K)\subset K$ for any $s\in S$, then, by construction of $\phi$, for every $s\in S$ we have   $$H(\phi)(s)=\int_{\overline{\C}}\rho(s)(\phi(z))d \delta_{x_0}(z)=(\rho(s)(\phi))(x_0)=1,$$
 where $\rho$ is the Lyubich representation. Thus $H(\phi)$ is  the constant function $1$, hence $M(H(\phi))=1$ for every mean $M$. Besides,  for a suitable $r$-invariant mean $L$ we have $$L(H(\phi))= \ell(\phi)=\int_{\overline{\C}}\phi(z) dm_S(z)=\phi(\infty)=0,$$ which is a contradiction.  \end{proof}

Let $R$ be a rational map with $deg(R)\geq 2$, then we define $E(R)$ to be the set of  all rational maps $Q$ such that $$L^*_Q(m_R)=m_R$$ where $L^*_Q$ is the dual operator to the Lyubich operator $L_Q$ and $m_R$ is the measure of maximal entropy of $R.$  The set $E(R)$ is a semigroup under composition.

Define $G(R)=E(R)\cap Mob$. Also, $G(R)$ can be characterized as the maximal subgroup of  $E(R).$

\begin{theorem}\label{th.entropy}
Let $P$ be a non-exceptional polynomial of degree at least 2, and $S$ be a subsemigroup of $E(P)$ then $S$ is right amenable. 
\end{theorem}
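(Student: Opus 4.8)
The plan is to exploit that every $Q\in S$ fixes the functional represented by $m_P$ and to manufacture a right-invariant mean from this single invariant measure. First I would record the reduction: by definition of $E(P)$ each $Q\in S$ satisfies $L_Q^*(m_P)=m_P$, and since $m_P$ is non-atomic, the uniqueness clause of Theorem \ref{th.Lyubich} together with Lemma \ref{lm.atomic} forces $m_P=m_Q$ for every non-injective $Q\in S$, while every M\"obius element of $S$ lies in $G(P)$ and preserves $m_P$. In particular $Q_*m_P=m_P$ for all $Q\in S$, so $S$ acts on $(\overline{\C},m_P)$ by measure-preserving maps and $m_P$ is a common $\rho(S)$-invariant functional for the Lyubich representation $\rho$. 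If every element of $S$ has degree one, then $S\subset G(P)$, which is a finite (the M\"obius symmetries of a non-exceptional maximal measure form a finite group), hence amenable, group, and we are done; so I may assume $S$ contains some $Q$ with $\deg(Q)\ge 2$.

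Next I would produce a right-invariant functional on the Lyubich subspace $X_\rho\subset L_\infty(S)$, thereby establishing $\rho$-right-amenability. Writing $B=C(\overline{\C})$, the generating functions are $f_{(b,\nu)}(s)=\int \rho(s)(b)\,d\nu$, and a direct computation using $\rho(st)=\rho(s)\rho(t)$ gives $r_t f_{(b,\nu)}=f_{(L_t b,\nu)}$ for every $t\in S$. I would then define
$$M\big(f_{(b,\nu)}\big)=\nu(\overline{\C})\int_{\overline{\C}} b\,dm_P,$$
extended by $M(\mathbf 1)=1$ on the constants. The key point is well-definedness, boundedness and invariance, which I would obtain from Lyubich's equidistribution of preimages: for $Q\in S$ with $\deg(Q)\ge2$ one has $L_{Q^{n}}(b)=L_Q^{\,n}(b)\to\big(\int b\,dm_P\big)\mathbf 1$ uniformly as $n\to\infty$. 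Evaluating any relation $\sum_i f_{(b_i,\nu_i)}\equiv 0$ at the points $s=Q^{n}\in S$ and letting $n\to\infty$ yields $\sum_i \nu_i(\overline{\C})\int b_i\,dm_P=0$, so $M$ is well defined; the same limit gives $|M(f_{(b,\nu)})|\le\|f_{(b,\nu)}\|_\infty$, so $M$ extends continuously to $X_\rho$ as a positive unit-norm functional. Since $\int L_t b\,dm_P=\int b\,dm_P$, it is right-invariant, and $S$ is $\rho$-right-amenable for $\rho$.

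Finally I would upgrade $\rho$-right-amenability to genuine right amenability, and this is the step I expect to be the main obstacle, precisely because the two notions diverge in general (Day's Theorem \ref{th.Day}): a Hahn-Banach extension of $M$ to all of $L_\infty(S)$ need not stay right-invariant, and since a free subsemigroup can embed in an amenable group, no purely embedding-theoretic shortcut can work. The way I would close the gap is to use the algebraic rigidity forced by the common maximal measure. By Theorem \ref{th.Levin-Przyticki} any two elements of degree $\ge2$ in $S$ satisfy the Levin relations after passing to iterates; feeding these into Proposition \ref{pr.Ritt} to produce common right factors, and combining this with the finiteness of $G(P)$ and the commutation-up-to-iterate of Theorem \ref{th.RittEremenko}, should exhibit $S$ as an extension of an abelian (hence amenable) semigroup by the finite group $G(P)$, whose amenability is then inherited by $S$ through the basic facts on quotients, subsemigroups of positive measure, and epimorphisms. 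The genuinely delicate work is thus the translation of the analytic statement that $m_P$ is the measure of maximal entropy of every element of $S$ into this algebraic near-commutativity, and in particular controlling the interaction between the Levin relations and the non-invertibility of the degree-$\ge2$ elements.
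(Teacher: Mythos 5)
There is a genuine gap, and it sits exactly where you predicted: the upgrade from ``$S$ lives inside a well-structured ambient object'' to ``$S$ itself is right amenable.'' Your sketch ends by exhibiting $S$ inside an extension of an abelian semigroup by the finite group $G(P)$ and invoking ``the basic facts on quotients, subsemigroups of positive measure, and epimorphisms.'' But Basic Fact 3 says precisely that a subsemigroup of an amenable semigroup need not be amenable, and Fact 5 only applies once you have produced a right-invariant mean on the ambient semigroup that gives \emph{positive} mass to $S$. That positivity is the whole content of the theorem and you never construct it. The paper's proof does exactly this: it first realizes $E(P)$ as a semidirect product $G(P)\rtimes_\Phi\Gamma$ with $\Gamma$ abelian (Theorem \ref{th.semidirect}), deduces $E(P)$ is RA via Klawe's Proposition \ref{prop.Klawe}, and then --- this is the key step, Lemma \ref{lm.polyRIM} --- runs through Klawe's explicit construction of a RIM $\mu$ on the semidirect product to show one can choose $\mu$ with $\mu(\chi_{\langle Q\rangle})\geq \frac{1}{|G(P)|m}>0$ for every $Q\in E(P)$, using that $G(P)$ is finite and that every $Q$ has the form $\gamma_n R^{mn}$. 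Only then does Fact 5 give amenability of an arbitrary subsemigroup $S$ containing some $Q$. Without an analogue of Lemma \ref{lm.polyRIM} your argument does not close.

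A secondary but real error: your well-definedness argument for $M$ on $X_\rho$ relies on $L_Q^{\,n}(b)\to\bigl(\int b\,dm_P\bigr)\mathbf 1$ uniformly on $\overline{\C}$. For a polynomial $Q$ the point $\infty$ is exceptional, $Q^{-n}(\infty)=\{\infty\}$ with full multiplicity, so $L_{Q^n}(b)(\infty)=b(\infty)$ for all $n$; the convergence is only locally uniform off the exceptional set. Concretely, with $\nu=\delta_\infty$ the function $f_{(b,\delta_\infty)}$ is the constant $b(\infty)$, yet your formula assigns it the value $\int b\,dm_P$, so $M$ is not well defined on $X_\rho$. This part is anyway dispensable, since $\rho$-right-amenability would not by itself yield the conclusion (that is the content of Day's Theorem \ref{th.Day} and of your own caveat), but as written it is incorrect.
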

This theorem is  reciprocal of  Theorem \ref{nonatomicLyubich}. We start with the following. 

Let $U$ and $T$ be  semigroups with a homomorphism $\rho:T\rightarrow End(U).$
Let  $U\rtimes_\rho T$ denote the semidirect product of the semigroups $U$ and $T$ which is the set $U\times T$ equipped with the following multiplication
$$(u_1,t_1)\cdot (u_2,t_2)=(u_1\cdot \rho(t_1)(u_2),t_1t_2).$$ 

Next proposition puts together two results of M. Klawe (see \cite{Klawe}).

\begin{proposition}\label{prop.Klawe} Let $U$ and $T$ be semigroups and $\rho:T\rightarrow End(U)$ be a homomorphism.
\begin{itemize}
 \item If $U$ and $T$ are RA, then $U\rtimes_\rho T$ is RA.
\item If $U$ and $T$ are amenable semigroups and  $\rho(t)$ is surjective for every $t\in T$, then $U\rtimes_\rho T$ is amenable.
\end{itemize}

\end{proposition}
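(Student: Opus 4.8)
The plan is to build invariant means on $L_\infty(U\rtimes_\rho T)$ out of invariant means on $U$ and $T$ by an iterated, Fubini-type integration, using that $L_\infty(U\rtimes_\rho T)=L_\infty(U\times T)$ as a set. Write $G=U\rtimes_\rho T$, let $M_U$ be a mean on $U$ and $M_T$ a mean on $T$, and for $\phi\in L_\infty(G)$ set
$$F_\phi(t)=M_U\big(u\mapsto\phi(u,t)\big),\qquad M(\phi)=M_T\big(t\mapsto F_\phi(t)\big).$$
Since $M_U,M_T$ are positive of norm one, $F_\phi\in L_\infty(T)$ and $M$ is a mean on $G$ (the normalization $M(\chi_G)=1$ is immediate). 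The whole argument reduces to tracking how $F_\phi$ transforms under one-sided translations. For the first assertion I would fix $s=(v,r)\in G$ and compute the right translate $r_s(\phi)(u,t)=\phi\big((u,t)\cdot(v,r)\big)=\phi\big(u\cdot\rho(t)(v),\,tr\big)$. The crucial point is that, for each fixed $t$, the element $w_t:=\rho(t)(v)$ lies in $U$, so $u\mapsto\phi(u\cdot w_t,tr)$ is an ordinary right $U$-translate of $u\mapsto\phi(u,tr)$; right invariance of $M_U$ gives $F_{r_s\phi}(t)=F_\phi(tr)=r_r(F_\phi)(t)$, and right invariance of $M_T$ then yields $M(r_s\phi)=M(\phi)$. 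Thus the twist by $\rho$ costs nothing on the right and $G$ is RA.

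For the second assertion the difficulty is left invariance. The left translate $l_s(\phi)(u,t)=\phi\big(v\cdot\rho(r)(u),\,rt\big)$ now applies $\rho(r)$ to the integration variable $u$, so the slice $u\mapsto\phi(v\cdot\rho(r)(u),rt)$ is the pullback $g_t\circ\rho(r)$ of $g_t(w)=\phi(v\cdot w,rt)$, not a $U$-translate of anything. To absorb this I would replace $M_U$ by an auxiliary two-sided invariant mean $\tilde M_U$ on $U$ that is in addition invariant under every pullback $\rho(r)^*$, i.e. $\tilde M_U(g\circ\rho(r))=\tilde M_U(g)$ for all $r$. I construct it by averaging a two-sided invariant mean $M_U$ over the $T$-action against a two-sided invariant mean $M_T$ on the amenable semigroup $T$:
$$\tilde M_U(g)=M_T\big(r\mapsto M_U(g\circ\rho(r))\big).$$

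The verification splits into two compatible invariances, and this is where surjectivity of each $\rho(r)$ enters as the main obstacle. Pullback-invariance is the easy half: since $\rho$ is a homomorphism, $g\circ\rho(r_0)\circ\rho(r)=g\circ\rho(r_0r)$, so $r\mapsto M_U(g\circ\rho(r_0r))$ is the left $T$-translate by $r_0$ of $r\mapsto M_U(g\circ\rho(r))$, and left invariance of $M_T$ gives $\tilde M_U(g\circ\rho(r_0))=\tilde M_U(g)$. The delicate half is that $\tilde M_U$ stays two-sided $U$-invariant: to check $\tilde M_U(l_a g)=\tilde M_U(g)$ one writes $(l_a g)\circ\rho(r)(u)=g(a\cdot\rho(r)(u))$ and uses surjectivity of $\rho(r)$ to pick $a'$ with $\rho(r)(a')=a$, so that $a\cdot\rho(r)(u)=\rho(r)(a'u)$ and hence $(l_a g)\circ\rho(r)=l_{a'}(g\circ\rho(r))$; left invariance of $M_U$ then collapses the inner mean, and the right-handed statement is symmetric. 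Surjectivity is exactly what lets a $U$-translation commute past $\rho(r)$, and it is indispensable — without it the pulled-back slice need not be a translate of a slice of $\phi$.

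Finally I would run the iterated construction with $\tilde M_U$ in place of $M_U$. Right invariance of $M$ is unchanged from the first assertion. For left invariance, the slice computation gives $F_{l_s\phi}(t)=\tilde M_U(g_t\circ\rho(r))=\tilde M_U(g_t)$ by pullback-invariance, then $\tilde M_U(g_t)=F_\phi(rt)=l_r(F_\phi)(t)$ by left invariance of $\tilde M_U$, and left invariance of $M_T$ closes the argument with $M(l_s\phi)=M(\phi)$. Hence $M\in RIM(G)\cap LIM(G)$ and $G=U\rtimes_\rho T$ is amenable. The only genuinely subtle step in the whole plan is manufacturing $\tilde M_U$ carrying its two invariances simultaneously, which is precisely the function of the surjectivity hypothesis and the reason it is absent from the first (right-only) assertion.
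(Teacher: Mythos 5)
Your proof is correct. The paper itself gives no argument for this proposition --- it simply cites Proposition 3.10 and Corollary 3.11 of Klawe --- so there is nothing to match line by line; but your iterated Fubini-type mean $M(\phi)=M_T\bigl(t\mapsto M_U(u\mapsto\phi(u,t))\bigr)$ for the right-amenable case is exactly Klawe's construction, which the authors themselves recapitulate later in the proof of Lemma \ref{lm.polyRIM}, and your translation bookkeeping (the twist $\rho(t)(v)$ landing harmlessly inside a right $U$-translate) is the correct reason it works. The second part is where you add genuine content: the auxiliary mean $\tilde M_U(g)=M_T\bigl(r\mapsto M_U(g\circ\rho(r))\bigr)$, simultaneously $U$-biinvariant and invariant under every pullback $\rho(r)^*$, is a clean and correct device, and you locate the role of surjectivity precisely --- it is what lets you write $a\cdot\rho(r)(u)=\rho(r)(a'u)$ and so push a $U$-translation through $\rho(r)$, which is the only obstruction to left invariance of the iterated mean. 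The two small hypotheses you implicitly use (that amenability of $U$ and $T$ supplies two-sided invariant $M_U$, $M_T$, and that $\rho(r_0)\circ\rho(r)=\rho(r_0r)$, forced by associativity of the semidirect product) both hold, so the argument is complete and self-contained where the paper is only a reference.
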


\begin{proof}
These appear as  Proposition 3.10  and Corollary 3.11 in \cite{Klawe}.
\end{proof}

\begin{theorem}\label{th.semidirect}
 Let $P$ be a polynomial map such that $E(P)$ is not abelian, then there are an abelian subsemigroup  $\Gamma < E(P)$ and a homomorphism $\Phi:\Gamma \rightarrow End(G(P))$ such that the semidirect product  $G(P)\rtimes_\Phi \Gamma$ is isomorphic to $E(P).$ Moreover, if $P$ is not M\"obius conjugated to $z^n$ then $\Gamma$ can be chosen as a cyclic semigroup. 
\end{theorem}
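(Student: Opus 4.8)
The plan is to realize $E(P)$ as an internal semidirect product whose kernel is $G(P)$ and whose complement is an abelian submonoid $\Gamma$. First observe that $\deg$ is a homomorphism from $E(P)$ into the commutative monoid $(\mathbb{Z}_{\geq 1},\cdot)$, that $E(P)$ is a monoid (it contains $id\in G(P)$) and is right-cancellative (compositions of surjections), and that $G(P)=E(P)\cap Mob$ is precisely $\deg^{-1}(1)$, the maximal subgroup. The technical heart is the following coset lemma: for each $d$ the fibre $\deg^{-1}(d)$ is a single left coset $G(P)\cdot Q$. For $d=1$ this is the definition of $G(P)$. For $d\geq 2$, two such maps $A,B$ share $m_P$ (by the non-atomic uniqueness in Theorem~\ref{th.Lyubich}), so by Theorem~\ref{th.Levin-Przyticki} suitable iterates satisfy the Levin relations; feeding the resulting equal-degree equations into Ritt's Proposition~\ref{pr.Ritt} forces $A$ and $B$ to differ by left composition with a M\"obius map that preserves $m_P$, i.e. an element of $G(P)$. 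For $P$ a polynomial one may argue more directly: the common Green's function makes the B\"ottcher coordinates at $\infty$ agree up to a linear map $w\mapsto cw$, which descends to some $\sigma\in G(P)$ with $A=\sigma\circ B$. An immediate corollary is commutativity modulo $G(P)$: since $\deg(AB)=\deg(BA)$, we get $AB=\sigma\,BA$ for some $\sigma\in G(P)$, which is what makes an \emph{abelian} complement plausible.

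Granting the coset lemma, the construction of $\Gamma$ and $\Phi$ is essentially forced. The relation ``same degree'' identifies the orbit space of the left $G(P)$-action with the submonoid $D=\deg(E(P))\subset(\mathbb{Z}_{\geq 1},\cdot)$, which is commutative. I would choose $\Gamma$ to be a submonoid of $E(P)$ containing $id$ and meeting each fibre in exactly one element; then the coset lemma gives, for every $R\in E(P)$, a factorization $R=\sigma\circ\gamma$ with $\sigma\in G(P)$ and $\gamma\in\Gamma$, unique because equal degrees force the same $\gamma\in\Gamma$ and then right-cancellation yields the same $\sigma$. Define $\Phi:\Gamma\to End(G(P))$ by letting $\Phi(\gamma)(\sigma)$ be the unique $\tau\in G(P)$ with $\gamma\circ\sigma=\tau\circ\gamma$: existence holds because $\gamma\sigma$ lies in the fibre of $\gamma$, and uniqueness is right-cancellation. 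One then checks routinely that each $\Phi(\gamma)$ is a monoid endomorphism and that $\Phi$ is a homomorphism (both follow from the defining identity together with right-cancellation), and that $(\sigma,\gamma)\mapsto\sigma\circ\gamma$ is an isomorphism $G(P)\rtimes_\Phi\Gamma\to E(P)$, the semidirect multiplication matching composition exactly because $\gamma_1\sigma_2=\Phi(\gamma_1)(\sigma_2)\,\gamma_1$. Note $\Phi(\gamma)$ need not be injective, which is why the target is $End(G(P))$ rather than an automorphism group; the non-abelian hypothesis on $E(P)$ only guarantees that $\Phi$ is nontrivial so that the product is genuinely not direct.

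The whole argument thus reduces to producing such an abelian $\Gamma$, and here the ``moreover'' is decided by the structure of $D$. The key dichotomy, to be extracted from Ritt's theory (again via Proposition~\ref{pr.Ritt} applied to the Levin relations), is that $D$ is the monoid of powers of a single integer $d_0$ unless $P$ is M\"obius conjugate to $z^n$. If $D=\langle d_0\rangle$, I would pick any $Q_0\in E(P)$ of degree $d_0$ and set $\Gamma=\{Q_0^k:k\geq 0\}$; by the coset lemma $\deg^{-1}(d_0^k)=G(P)Q_0^k$, so $\Gamma$ is a transversal, and being a cyclic submonoid it is abelian, proving both the main statement and the cyclic refinement at once. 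In the exceptional case $P\sim z^n$ one takes $\Gamma=\{z^d:d\geq 1\}\cup\{id\}$, the monomials, which commute and realize $D=(\mathbb{Z}_{\geq 1},\cdot)$, giving an abelian but non-cyclic $\Gamma$. I expect the main obstacle to be exactly this structural dichotomy for $D$, namely proving that sharing a maximal measure with multiplicatively independent degrees forces the power-map geometry, together with the coset lemma of the first paragraph, which is the rigidity input underlying the entire splitting.
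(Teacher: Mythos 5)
Your overall architecture coincides with the paper's: both arguments split $E(P)$ as $G(P)\rtimes_\Phi\Gamma$ with $\Phi$ given by the semiconjugacy $Q\circ\gamma=\gamma'\circ Q$, and both reduce everything to the structural fact that every element of $E(P)$ has the form $\sigma\circ Q^{k}$ for a fixed minimal-degree $Q$ and $\sigma\in G(P)$. The purely formal part of your proposal is sound: uniqueness of the factorization by right cancellation, $\Phi(\gamma)$ being an endomorphism, the multiplication matching, and the observation that a transversal must be chosen as a submonoid (which your explicit choices $\Gamma=\langle Q_0\rangle$ and $\Gamma=\{z^{d}\}$ guarantee).

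The gap is in the two inputs you isolate, above all the degree dichotomy. These are precisely the rigidity statement that the paper imports wholesale as the Atela--Hu theorem, and the mechanism you offer for them does not deliver. Proposition~\ref{pr.Ritt} applied to the equal-degree Levin relation $A^{m}\circ B^{m}=A^{m}\circ A^{m}$ only yields a common right factor $A^{m}=X\circ Z$, $B^{m}=Y\circ Z$; it does not force $A=\sigma\circ B$ with $\sigma$ M\"obius (your B\"ottcher-coordinate argument is the right idea for that, in the polynomial case), and it says nothing about the degree monoid $D$ being generated by a single integer. Moreover, the dichotomy as you state it is false: if $P$ is M\"obius conjugate to a Chebyshev polynomial, then $E(P)$ contains every $T_{n}$, it is non-abelian (since $(-z)\circ T_{2}=-T_{2}$ while $T_{2}\circ(-z)=T_{2}$), and $\deg(E(P))$ is all of $\Z_{\geq 1}$ rather than the powers of a single $d_{0}$; so sharing the maximal measure with multiplicatively independent degrees forces power-map \emph{or Chebyshev} geometry, and any correct proof must run through this second exceptional case exactly as the Atela--Hu classification does (it excludes Julia sets that are circles or segments). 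Since you yourself flag the dichotomy as the main obstacle and the route you propose (Ritt's proposition plus the Levin relations) cannot close it, the proof is incomplete at its essential step; the paper closes it by citing the Atela--Hu theorem, and your argument would need to do the same or reprove that classification.
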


\begin{proof} 
First assume that $P$ is not affinely conjugated to $z^n$ for some $n.$ Since the elements of $G(P)$ leave the Julia set $J(P)$ invariant, then $G(P)$ is a  finite group of rotations around a common center (see for example Lemma 4 of \cite{BeardonSymmet}).

Let $Q\in E(P)$ be a non-linear polynomial of minimal degree. Since $E(P)$ is not abelian then by Atela-Hu theorem in \cite{AtelaHu} for every element  $\tilde{Q}\in E(P)$ there is $n\geq 0$ and  a $\gamma\in G(P)$ such that   $\tilde{Q}=\gamma \circ Q^n$.  But the degree of $Q$ is minimal and $Q\circ \gamma \in E(P)$ then $Q\circ \gamma=\gamma' \circ Q$. Thus the correspondence $\gamma\mapsto \gamma'$ defines a homomorphism $\Phi$ from  $\langle Q \rangle $ to $End(G(P))$. With $\Phi$,  construct the semidirect product $G(P)\rtimes_\Phi \langle Q \rangle$ so the correspondence $(u,g)\mapsto u\circ g$ defines a surjective isomorphism $G(P)\rtimes_\Phi \langle Q \rangle \rightarrow E(P).$  

Now assume $P$ is affinely conjugated to $z^n$ for some $n$. Then $E(P)$ is not abelian and after a suitable conjugation $G(P)$ becomes $G(z^n)$  which is generated by the group of all rotations around $0$ and the element $1/z$. In this situation, we can choose a subsemigroup $\Gamma \subset E(P)$ conjugated to all powers of $z$. Therefore $\Gamma$ is an abelian infinitely generated semigroup acting on $G(P)$ by semiconjugacy as a semigroup of surjective endomorphisms of $G(P)$. Again, the correspondence $(u,T)\rightarrow u\circ T$ generates an isomorphism  $G(P)\rtimes_\Phi \Gamma \rightarrow E(P).$
\end{proof}

As an immediate consequence we have.

\begin{corollary}\label{cor.ramenable}
 Let $P$ be a polynomial, then $E(P)$ is RA. Even more,  if $P$ is conjugated to $z^n$ then $E(P)$ is amenable.
\end{corollary}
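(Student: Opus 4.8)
The plan is to split on whether $E(P)$ is abelian and, in the non-abelian case, to feed the semidirect product decomposition of Theorem \ref{th.semidirect} into Klawe's Proposition \ref{prop.Klawe}. First I would dispose of the abelian case: if $E(P)$ is abelian, then by the first of the Basic Facts on Amenability it is amenable, and in particular right amenable, since $RIM(E(P))\cap LIM(E(P))\neq\emptyset$ already forces $RIM(E(P))\neq\emptyset$. This settles both assertions whenever $E(P)$ is abelian, independently of any $z^n$-conjugacy.

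Now suppose $E(P)$ is not abelian. By Theorem \ref{th.semidirect} there is an abelian subsemigroup $\Gamma<E(P)$ and a homomorphism $\Phi:\Gamma\to End(G(P))$ with $E(P)\cong G(P)\rtimes_\Phi\Gamma$. The factor $\Gamma$ is abelian, hence amenable and in particular right amenable. For the factor $G(P)$ I would follow the two cases appearing in the proof of Theorem \ref{th.semidirect}: if $P$ is not affinely conjugated to $z^n$, then $G(P)$ is a finite group of rotations, which is amenable because every finite group is amenable; if $P$ is conjugated to $z^n$, then after conjugation $G(P)=G(z^n)$ is generated by the rotations around $0$ together with $z\mapsto 1/z$, and since conjugation by $1/z$ sends $z\mapsto e^{i\theta}z$ to $z\mapsto e^{-i\theta}z$, this group is the semidirect product $\mathbb{T}\rtimes\mathbb{Z}/2$ of the circle group of rotations by the order-two group generated by $1/z$; as $\mathbb{T}$ is abelian, $\mathbb{Z}/2$ is finite, and the action is by a surjective automorphism, it is amenable (by compactness, or by a further application of Proposition \ref{prop.Klawe}). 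In either case $G(P)$ is right amenable, so the first bullet of Proposition \ref{prop.Klawe} applied to $G(P)\rtimes_\Phi\Gamma$ shows that $E(P)$ is right amenable, which proves the first assertion.

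For the ``even more'' part I would assume $P$ is conjugated to $z^n$ and $E(P)$ is not abelian, the abelian case having been handled above. In the $z^n$ branch of Theorem \ref{th.semidirect}, the semigroup $\Gamma$ is the abelian semigroup conjugate to the powers of $z$, acting on $G(P)$ by surjective endomorphisms, while $G(P)=G(z^n)$ is amenable by the previous paragraph. Thus both factors are amenable and $\Phi(t)$ is surjective for every $t\in\Gamma$, so the second bullet of Proposition \ref{prop.Klawe} yields that $E(P)\cong G(P)\rtimes_\Phi\Gamma$ is amenable, as claimed.

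The hard part will be the $z^n$ case: unlike the generic case, $G(P)$ is no longer finite, so I must identify it precisely as $\mathbb{T}\rtimes\mathbb{Z}/2$ and justify its amenability, and—crucially for the stronger ``amenable'' conclusion rather than merely right amenability—confirm that the $\Gamma$ furnished by Theorem \ref{th.semidirect} acts by surjective endomorphisms of $G(P)$, so that the surjectivity hypothesis of the second bullet of Proposition \ref{prop.Klawe} is genuinely met.
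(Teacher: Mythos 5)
Your proposal is correct and follows essentially the same route as the paper: the paper's own proof is exactly the dichotomy ``abelian, hence amenable'' versus ``non-abelian, apply Proposition \ref{prop.Klawe} to the decomposition $E(P)\cong G(P)\rtimes_\Phi\Gamma$ of Theorem \ref{th.semidirect}.'' You merely spell out the details the paper leaves implicit (the identification of $G(z^n)$ as an abelian-by-finite group and the verification of the surjectivity hypothesis for the second bullet of Klawe's result), all of which are consistent with the statements and proof of Theorem \ref{th.semidirect}.
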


\begin{proof} If $E(P)$ is abelian then it is amenable. Otherwise, the corollary follows from Proposition \ref{prop.Klawe} and Theorem \ref{th.semidirect}.
\end{proof}

Given a rational map $R$, let 
$$Deck(R)=\{\gamma \in Mob: R\circ \gamma=R\}$$ and $$Aut(R)=\{\gamma \in Mob: R\circ \gamma=\gamma \circ R\}.$$ 

\begin{corollary}\label{cor.GPGenerated}
 Let $P$ be a polynomial with finite  $G(P)$. Let $Q\in E(P)$ be a non-injective polynomial of minimal degree. Then there exist natural numbers $m$, $n$ such that $G(P)$ is generated by  $Deck(Q^m)$ and $Aut(Q^n).$
\end{corollary}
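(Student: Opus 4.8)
We need to show: for a polynomial $P$ with finite $G(P)$, and $Q \in E(P)$ a non-injective polynomial of minimal degree, there exist $m, n$ such that $G(P)$ is generated by $Deck(Q^m)$ and $Aut(Q^n)$.

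**Understanding the objects.**

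$G(P) = E(P) \cap Mob$ is the group of Möbius transformations preserving the measure of maximal entropy $m_P$. By Theorem \ref{th.semidirect}, when $E(P)$ is non-abelian, $G(P)$ is a finite group of rotations around a common center (for $P$ not conjugate to $z^n$).

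$Deck(Q^m) = \{\gamma \in Mob : Q^m \circ \gamma = Q^m\}$ — these are deck transformations of the branched covering $Q^m$. These are Möbius maps $\gamma$ with $Q^m \gamma = Q^m$.

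$Aut(Q^n) = \{\gamma \in Mob : Q^n \circ \gamma = \gamma \circ Q^n\}$ — the Möbius symmetries commuting with $Q^n$.

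**Key relationships.**

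First, both $Deck(Q^m)$ and $Aut(Q^n)$ should be subgroups of $G(P)$. If $\gamma \in Deck(Q^m)$, then $Q^m \gamma = Q^m$, so $L_{Q^m \gamma} = L_{Q^m}$, giving $L_\gamma L_{Q^m} = L_{Q^m}$... wait, let me be careful with the direction.

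Let me think about the mechanism. Since $Q \in E(P)$, we have $L_Q^* m_P = m_P$, and since $Q$ is non-injective with $\deg Q > 1$, by Theorem \ref{th.Lyubich} uniqueness, $m_Q = m_P$. So $Q$ shares the measure of maximal entropy with $P$.

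For $\gamma \in Deck(Q^m)$: $Q^m \circ \gamma = Q^m$. Then $m_P = m_{Q^m} = m_{Q^m \circ \gamma}$. Pushing forward: $\gamma$ preserves... Actually $Q^m \gamma = Q^m$ means $\gamma$ permutes fibers of $Q^m$, so $\gamma_* m_{Q^m}$ relates to $m_{Q^m}$. Since $m_{Q^m} = m_P$, we'd get $\gamma \in G(P)$.

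For $\gamma \in Aut(Q^n)$: $Q^n \gamma = \gamma Q^n$ means $\gamma$ commutes with $Q^n$, so $\gamma$ preserves $J(Q^n) = J(P)$ and $m_{Q^n} = m_P$, giving $\gamma_* m_P = m_P$, so $\gamma \in G(P)$.

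So both are subgroups of $G(P)$. The claim is they **generate** $G(P)$.

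**The semidirect product structure.**

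From Theorem \ref{th.semidirect}, when $E(P)$ is non-abelian and $P$ is not conjugate to $z^n$: $E(P) \cong G(P) \rtimes_\Phi \langle Q \rangle$ where $\Phi: \langle Q \rangle \to End(G(P))$, and crucially $Q \circ \gamma = \gamma' \circ Q$ where $\gamma' = \Phi(Q)(\gamma)$. So $\Phi(Q)$ is the endomorphism $\gamma \mapsto \gamma'$ defined by conjugation-like relation $Q\gamma = \Phi(Q)(\gamma) Q$.

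Now $G(P)$ is a finite group of rotations, so cyclic (rotations around a common center form a cyclic group when finite). Let $|G(P)| = N$, so $G(P) = \langle \zeta \rangle$ where $\zeta$ is rotation by $2\pi/N$.

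**Connecting $Deck$ and $Aut$ to $\Phi$.**

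The endomorphism $\Phi(Q): G(P) \to G(P)$ is $\gamma \mapsto \gamma'$ where $Q\gamma = \gamma' Q$.

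$Aut(Q)$ consists of $\gamma$ with $Q\gamma = \gamma Q$, i.e., $\gamma' = \gamma$, i.e., **fixed points of $\Phi(Q)$**. So $Aut(Q^n) \cap G(P) = Fix(\Phi(Q)^n) = Fix(\Phi(Q^n))$.

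$Deck(Q)$ consists of $\gamma$ with $Q\gamma = Q = \mathrm{id} \cdot Q$, i.e., $\gamma' = \mathrm{id}$, i.e., **kernel of $\Phi(Q)$** (elements mapping to identity under the relation). So $Deck(Q^n) \cap G(P) = \ker(\Phi(Q^n))$ — actually we need $Q^n \gamma = Q^n$, meaning $\Phi(Q^n)(\gamma) = \mathrm{id}$.

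**The group-theoretic core.**

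So the statement reduces to: for the endomorphism $\phi = \Phi(Q)$ of the finite cyclic group $G = G(P) = \mathbb{Z}/N$, there exist $m, n$ such that $\ker(\phi^m)$ and $Fix(\phi^n)$ generate $G$.

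Since $\phi$ is an endomorphism of $\mathbb{Z}/N$, it's multiplication by some integer $d$. Then:
- $\ker(\phi^m) = \{x : d^m x \equiv 0\}$ — the $\gcd(d^m, N)$-torsion, growing with $m$.
- $Fix(\phi^n) = \{x : d^n x \equiv x\} = \{x : (d^n - 1)x \equiv 0\}$.

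For large $m$, $\ker(\phi^m)$ stabilizes to the "radical part" — elements killed by primes dividing $\gcd(d,N)$. For appropriate $n$ (e.g. $n$ making $d^n \equiv 1$ modulo the part of $N$ coprime to $d$), $Fix(\phi^n)$ captures the coprime-to-$d$ part. Together these should generate all of $\mathbb{Z}/N$ by CRT: split $N = N_1 N_2$ where $N_1$ collects primes dividing $d$ and $N_2$ the rest; $\ker(\phi^m)$ handles $N_1$, $Fix(\phi^n)$ handles $N_2$.

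**My proof plan.**

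\begin{proof}[Proof sketch]
The plan is to reduce the statement to an elementary fact about endomorphisms of a finite cyclic group, using the semidirect product structure from Theorem \ref{th.semidirect}.

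First I would record that both $Deck(Q^m)$ and $Aut(Q^n)$ are subgroups of $G(P)$. Since $Q \in E(P)$ is non-injective of degree at least $2$, Theorem \ref{th.Lyubich} gives $m_Q = m_P$, hence $m_{Q^k} = m_P$ for all $k$. If $\gamma \in Deck(Q^m)$ then $Q^m \circ \gamma = Q^m$ forces $\gamma$ to preserve $J(Q^m) = J(P)$ and the fibers of $Q^m$, whence $\gamma_* m_P = m_P$ and $\gamma \in G(P)$; if $\gamma \in Aut(Q^n)$ then $\gamma$ commutes with $Q^n$, so again $\gamma \in G(P)$. Here we use that $G(P)$ is finite, so $P$ is not conjugate to $z^n$, and Theorem \ref{th.semidirect} applies: $E(P) \cong G(P) \rtimes_\Phi \langle Q \rangle$ with the relation $Q \circ \gamma = \Phi(Q)(\gamma) \circ Q$ for $\gamma \in G(P)$.

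The key observation is that $Deck$ and $Aut$ translate directly into kernels and fixed sets of the endomorphism $\phi := \Phi(Q)$ of $G(P)$. Iterating the defining relation gives $Q^k \circ \gamma = \phi^k(\gamma) \circ Q^k$, so $\gamma \in G(P)$ lies in $Aut(Q^n)$ exactly when $\phi^n(\gamma) = \gamma$, and lies in $Deck(Q^m)$ exactly when $\phi^m(\gamma) = \mathrm{id}$. Thus
$$Aut(Q^n) \cap G(P) = Fix(\phi^n), \qquad Deck(Q^m) \cap G(P) = \ker(\phi^m).$$
Since $G(P)$ is a finite group of rotations about a common center, it is cyclic, say $G(P) \cong \mathbb{Z}/N$, and $\phi$ is multiplication by some integer $d$ modulo $N$.

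The remaining step is purely arithmetic. Factor $N = N_1 N_2$ where $N_1$ is built from the primes dividing $\gcd(d,N)$ and $N_2$ is coprime to $d$. For $m$ large enough the subgroup $\ker(\phi^m) = \{x : d^m x \equiv 0 \ (\mathrm{mod}\ N)\}$ stabilizes and contains the full $N_1$-primary part of $\mathbb{Z}/N$. Choosing $n$ with $d^n \equiv 1 \ (\mathrm{mod}\ N_2)$, which exists because $d$ is a unit modulo $N_2$, the subgroup $Fix(\phi^n) = \{x : (d^n - 1)x \equiv 0\}$ contains the full $N_2$-primary part. By the Chinese Remainder Theorem these two primary components together generate all of $\mathbb{Z}/N$, so $Deck(Q^m)$ and $Aut(Q^n)$ generate $G(P)$.

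The main obstacle is pinning down the dictionary between the dynamical objects $Deck(Q^m)$, $Aut(Q^n)$ and the algebraic endomorphism $\phi$ — in particular verifying that no extra Möbius elements outside $G(P)$ appear in $Aut(Q^n)$ or $Deck(Q^m)$, which relies on the finiteness of $G(P)$ and the rigidity furnished by the measure of maximal entropy. Once this identification is secured, the cyclic-group computation is routine.
\end{proof}
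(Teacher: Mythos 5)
Your proof is correct and follows essentially the same route as the paper: both translate $Deck(Q^m)$ into $\ker(\Phi(Q)^m)$ and $Aut(Q^n)$ into the fixed-point set of $\Phi(Q)^n$ via the semiconjugacy $Q\circ\gamma=\Phi(Q)(\gamma)\circ Q$ of Theorem \ref{th.semidirect}, and then split the finite group $G(P)$ into the stable kernel and the stable image of this endomorphism. The paper states this splitting abstractly as $G(P)\simeq Ker(\Phi^m)\otimes Im(\Phi^m)$, whereas you realize it concretely for a cyclic group via the Chinese Remainder Theorem; this is the same decomposition, and your extra verification that $Deck(Q^m)$ and $Aut(Q^n)$ actually lie inside $G(P)$ is a point the paper leaves implicit.
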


\begin{proof}
 
By Theorem  \ref{th.semidirect}, the map $Q$ defines an endomorphism $\Phi(Q):G(P)\rightarrow G(P)$ by the semiconjugacy $Q\circ \gamma = \Phi(Q)(\gamma)\circ Q$. Since $G(P)$ is finite the map $\Phi(Q)$ is surjective if and only if $\Phi(Q)$ is an automorphism of $G(P)$. In this case, there exists $n$ such that $\Phi^n(Q)=Id$ and thus $Q^n\circ \gamma=\Phi^n(Q)(\gamma)\circ Q=\gamma\circ Q^n.$

If $\Phi(Q)$ is not an automorphism,  then as $G(P)$ is finite there exists $k$ so that $\Phi$ acts on $\Phi^k(G(P))$ as an automorphism and $$G(P)\simeq Ker(\Phi^k)\otimes Im(\Phi^k).$$ Let $m>0$ be the minimal  number  satisfying the equation above. Let $n$ be the minimal natural number such that $\Phi^n:Im(\Phi^m) \rightarrow Im(\Phi^m)$ is identity. Then every $\gamma \in Im(\Phi^m)$ commutes with $Q^n$. On the other hand, if $\gamma\in Ker(\Phi^m)$ then $Q^m(\gamma)=Q^m$.
 \end{proof}

 \begin{example}\label{example} Let $P(z)=z^5+z^2$, then $$G(P)=\{\lambda z: \lambda^3=1\}$$
 and $$E(P)=\{\lambda^kP^l, \textnormal{ for } k=0,1,2, \textnormal{ and } l=0,1,...\}$$ is amenable by Theorem \ref{th.semidirect}. 
 
 Since $G(P)=Aut(P^2)$ then by Corollary \ref{cor.GPGenerated} we have  $m=0$, $n=2$. Therefore, the polynomial $Q=\lambda P$ commutes with $P^2$ but does not commutes with $P.$ 
 In particular, amenability does not implies commutativity even for polynomials. 
 \end{example}

\begin{lemma}\label{lm.polyRIM} Let $P$ be a polynomial with finite $G(P)$, then there
exists $\mu\in RIM(E(P)) $ so that $\mu(\chi_{\langle Q \rangle})>0$ for  every $Q\in E(P)$.
\end{lemma}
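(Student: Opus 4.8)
The plan is to realize a single right invariant mean on $E(P)$ as a fibered average: uniform over the finite group $G(P)$ on each ``degree level'', and then averaged over the levels by a density mean on $\mathbb N$. First I would fix the structure. Because $G(P)$ is finite, $P$ is not M\"obius conjugate to $z^n$, so Theorem \ref{th.semidirect} gives $E(P)\cong G(P)\rtimes_\Phi\langle Q_0\rangle$ with $Q_0$ a non-injective element of minimal degree (when $E(P)$ is abelian this is an untwisted direct product). Every element is then $\gamma\circ Q_0^{\,n}$ with $\gamma\in G(P)$ and $n\ge 0$, so the exponent map $\pi:E(P)\to(\mathbb N,+)$, $\gamma\circ Q_0^{\,n}\mapsto n$ (equivalently $\pi(Q)=\log_{\deg Q_0}\deg Q$), is a surjective homomorphism whose fibers $\pi^{-1}(n)=\{\gamma\circ Q_0^{\,n}:\gamma\in G(P)\}$ each have exactly $|G(P)|$ elements by right cancellativity. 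Consequently each right translation $x\mapsto x s$ is injective and degree-shifting, hence maps $\pi^{-1}(n)$ bijectively onto the equally sized fiber $\pi^{-1}(n+\pi(s))$.

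Next I would build the mean. Let $m_{\mathbb N}$ be the translation invariant mean on $\mathbb N$ given by a Banach limit of the Ces\`aro averages $\psi\mapsto\frac1N\sum_{k=1}^N\psi(k)$; it satisfies $m_{\mathbb N}(\chi_{d\mathbb N})=\lim_N \lfloor N/d\rfloor/N=1/d>0$ for every $d\ge1$. For $\phi\in L_\infty(E(P))$ set $\psi_\phi(n)=\frac1{|G(P)|}\sum_{x\in\pi^{-1}(n)}\phi(x)$ and define $$\mu(\phi)=m_{\mathbb N}(\psi_\phi).$$ Then $\mu$ is positive with $\mu(\chi_{E(P)})=1$. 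For invariance, the fiberwise bijectivity of right translation gives $\psi_{r_s\phi}(n)=\psi_\phi(n+\pi(s))$, that is $\psi_{r_s\phi}=r_{\pi(s)}\psi_\phi$ on $\mathbb N$, whence $$\mu(r_s\phi)=m_{\mathbb N}(r_{\pi(s)}\psi_\phi)=m_{\mathbb N}(\psi_\phi)=\mu(\phi)$$ by invariance of $m_{\mathbb N}$; so $\mu\in RIM(E(P))$. This is precisely the right invariant mean produced by the semidirect-product construction behind Proposition \ref{prop.Klawe}, now with a deliberately chosen density factor $m_{\mathbb N}$.

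Finally I would compute the masses. For a non-injective $Q\in E(P)$ put $n_0=\pi(Q)\ge1$; then $\pi(Q^k)=kn_0$ and $Q^k$ is the only element of $\langle Q\rangle$ in the fiber $\pi^{-1}(kn_0)$, so $\psi_{\chi_{\langle Q\rangle}}=\frac1{|G(P)|}\chi_{n_0\mathbb N}$ up to the single point $0$, and $$\mu(\chi_{\langle Q\rangle})=\tfrac1{|G(P)|}\,m_{\mathbb N}(\chi_{n_0\mathbb N})=\frac1{|G(P)|\,n_0}>0.$$ Thus one mean $\mu$ charges every non-injective cyclic subsemigroup positively, which is the case needed in the applications (e.g.\ to conclude that a subsemigroup of $E(P)$ containing a non-injective map is itself right amenable, via the basic fact that a subsemigroup of positive mean measure is RA).

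The step I expect to be the crux is the choice of $m_{\mathbb N}$. Right invariance alone forces any mean to vanish on finite sets, in particular on the degree-zero fiber $G(P)$, so the positivity is genuinely about the non-injective $Q$ (for $Q\in G(P)$ the group $\langle Q\rangle$ is finite and receives mass $0$); and even for non-injective $Q$ an arbitrary invariant mean on $\mathbb N$ may assign measure $0$ to the progression $n_0\mathbb N$. Using the Ces\`aro (density) mean is exactly what secures $m_{\mathbb N}(\chi_{n_0\mathbb N})=1/n_0>0$ simultaneously for all $n_0$, so that a single $\mu$ works uniformly in $Q$. The only other point requiring care is the fiberwise bijectivity of right translations, yielding the identity $\psi_{r_s\phi}=r_{\pi(s)}\psi_\phi$; this follows cleanly from right cancellativity together with the constant fiber size $|G(P)|$.
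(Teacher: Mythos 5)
Your proof is correct and is essentially the paper's own argument: it is Klawe's fibered-mean construction behind Proposition \ref{prop.Klawe} applied to the decomposition $E(P)\cong G(P)\rtimes_\Phi\langle Q_0\rangle$ of Theorem \ref{th.semidirect}, with the uniform mean on the finite fiber $G(P)$ and an invariant mean on $\mathbb{N}$, yielding the same estimate $\mu(\chi_{\langle Q\rangle})\geq \frac{1}{|G(P)|\,\pi(Q)}$ (your only inessential deviation is insisting on the Ces\`aro--Banach limit, which is not needed since \emph{every} invariant mean on $\mathbb{N}$ gives $n_0\mathbb{N}$ mass exactly $1/n_0$). Your observation that positivity necessarily fails for injective $Q$, so the lemma should be read for non-injective $Q$ only, is a caveat the paper's proof shares implicitly and is exactly the case used in the applications.
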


\begin{proof}
 We follow Klawe's proof of Proposition \ref{prop.Klawe} (Proposition 3.10 in \cite{Klawe}).
We summarize Klawe's construction of a RIM for a semidirect product $S=U\rtimes_\rho T$  
of RA semigroups $U$ and $T$ with representation $\rho:T\rightarrow End(U)$ as follows.  

\begin{itemize}
 \item Choose both a  RIM $\phi$ for $U$  and a RIM $\nu$ for $T$.
 \item For each $f \in L_\infty(S)$ construct the function  $\tilde{f}\in L_\infty(T)$ as follows:  given $a\in T$  let  $f_a(u)=f(u,a)$ for $u\in U$, then the family of functions $\{f_a\}$ belongs to $L_\infty(U)$. Let  $\tilde{f}(a)=\phi(f_a).$
 \item The mean $\mu$ on $S$  given by
 $$\mu(f)=\nu(\tilde{f})$$ is a RIM for $S.$
\end{itemize}

By Theorem \ref{th.semidirect}, the semigroup $E(P)$ contains a polynomial map $R$ such that $E(P)$ is isomorphic to the semidirect product of $G(P)$ and $\langle R \rangle$. Choose two RIMs $\phi$ and $\nu$ for $G(R)$ and $\langle R \rangle$, respectively. Since $G(R)$ is finite,  $\phi(\chi_A)\geq\frac{1}{|G(P)|}$ for each subset $A \subset G(P)$. Let $Q\in E(P)$  then there exists a number  $m\geq 0$ such that, for every $n>0$, $Q^n=\gamma_n R^{mn}$ with $\gamma_n\in G(P)$. 

If $f=\chi_{\langle Q \rangle}$ is the characteristic function of $\langle Q \rangle$ in $L_\infty(E(P))$, then 
the family of functions  $f_{R^n}(\gamma)=f(\gamma R^n)$ belongs to $L_\infty(G(P))$. Thus
the function  $\tilde{f}(R^n)=\phi(f_{R^n})\in L_\infty(\langle R\rangle )$. By construction 
$\tilde{f}\geq \frac{1}{|G(P)|}\chi_{\langle R^{m}\rangle}$.  Since $\nu$ is finitely additive and $r$-invariant, we conclude that $\mu(f)=\nu(\tilde{f})\geq \frac{1}{|G(P)|m}>0.$\end{proof}

Now we are ready to prove Theorem \ref{th.entropy}.

\begin{proof}[Proof of  Theorem \ref{th.entropy}]
Let  $Q\in S < E(P)$ with $Q\neq Id$. By Corollary 
\ref{cor.ramenable} the semigroup $E(P)$ is RA. By Lemma \ref{lm.polyRIM}, 
there exists $\mu\in RIM(E(P))$ such that $\mu(\chi_{\langle Q \rangle })>0$. 
Hence $\mu(\chi_S)\geq \mu(\chi_Q)>0.$ We finish the proof by  applying property 5 in Section \ref{sect.Semigroup}: 
\textit{Let $S_0$ be a subsemigroup of $S$, if there is  $\mu\in RIM(S) $ such that 
$\mu(\chi_{S_0})>0$, then $S_0$ is  right amenable itself.}
\end{proof}
The following Corollary implies the proof of Theorem \ref{th.rholyub}.
\begin{corollary}\label{cor.ramenability}
For an admissable  collection of polynomials $\mathcal{P}$ the following conditions are equivalent. 
 
 \begin{enumerate}
  \item The semigroup $S(\mathcal{P})$ is right amenable.
  \item The semigroup $S(\mathcal{P})$ is Lyubich right amenable.
  \item There exist $P\in \mathcal{P}$ such that $\mathcal{P}\subset E(P)$.

 \end{enumerate}

\end{corollary}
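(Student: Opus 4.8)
The plan is to prove the cyclic chain of implications $(1)\Rightarrow(2)\Rightarrow(3)\Rightarrow(1)$, assembling the machinery developed above. For the first implication, $(1)\Rightarrow(2)$, I would note that the Lyubich representation $\rho$ is a bounded representation, since every Lyubich operator has unit norm $\|L_s\|=1$. Day's Theorem \ref{th.Day} (equivalently, the already-recorded remark that right amenability implies $\rho$-right-amenability for every bounded representation) then shows that right amenability of $S(\mathcal{P})$ forces $\rho$-right-amenability, which is exactly Lyubich right amenability.

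For $(2)\Rightarrow(3)$ I would invoke Theorem \ref{nonatomicLyubich}. Since $\mathcal{P}$ is admissable it contains a non-exceptional polynomial $g$ with $\deg(g)\geq 2$, and since $S(\mathcal{P})$ carries a right amenable Lyubich representation, Theorem \ref{nonatomicLyubich} produces a unique non-atomic Lyubich measure $m_S$ that coincides with the measure of maximal entropy $m_g$ of $g$. For every $Q\in\mathcal{P}\subset S(\mathcal{P})$ the measure $m_S$ is $L_Q$-invariant, that is $L_Q^*(m_S)=m_S$; substituting $m_S=m_g$ this reads precisely $Q\in E(g)$. Hence $\mathcal{P}\subset E(g)$ with $g\in\mathcal{P}$, which is condition $(3)$.

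For $(3)\Rightarrow(1)$, suppose $\mathcal{P}\subset E(P)$ for some $P\in\mathcal{P}$; since $E(P)$ is a semigroup this gives $S(\mathcal{P})\subset E(P)$. The delicate point is that the distinguished $P$ in $(3)$ need not be non-exceptional, whereas Theorem \ref{th.entropy} requires a non-exceptional generator. To repair this I would again take the non-exceptional $g\in\mathcal{P}$ furnished by admissability. From $g\in E(P)$ we get $L_g^*(m_P)=m_P$, where $m_P$ is the measure of maximal entropy of $P$, which is non-atomic by Theorem \ref{th.Lyubich}. By the uniqueness clause of Theorem \ref{th.Lyubich}, a non-atomic $L_g$-invariant probability measure must be a multiple of $m_g$, hence $m_P=m_g$ and therefore $E(P)=E(g)$. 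Thus $S(\mathcal{P})\subset E(g)$ with $g$ non-exceptional of degree at least $2$, and Theorem \ref{th.entropy} yields that $S(\mathcal{P})$ is right amenable.

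I expect the main obstacle to lie in this last implication, in the exceptionality bookkeeping: one must certify that the maximal-entropy measure in play is genuinely non-atomic, which is where admissability is essential, as it is exactly the hypothesis preventing the Lyubich measure from collapsing to the atomic measure supported at the fixed point over $\infty$ in Theorem \ref{nonatomicLyubich}, and then exploit the uniqueness of the maximal-entropy measure to transport the containment onto a non-exceptional generator so that Theorem \ref{th.entropy} becomes applicable. The remaining verifications, namely boundedness of $\rho$ and the semigroup property of $E(g)$, are routine.
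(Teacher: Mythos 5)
Your proof is correct and follows essentially the same route as the paper, which establishes the cycle $(1)\Rightarrow(2)\Rightarrow(3)\Rightarrow(1)$ by citing Theorem \ref{th.Day}, Theorem \ref{nonatomicLyubich} and Theorem \ref{th.entropy} respectively. Your extra step transporting the containment $\mathcal{P}\subset E(P)$ onto a non-exceptional generator $g$ (via uniqueness of the non-atomic maximal-entropy measure, so that $E(P)=E(g)$) fills in a detail the paper leaves implicit, since Theorem \ref{th.entropy} is stated only for non-exceptional $P$ of degree at least $2$.
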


\begin{proof}

Part (1) implies (2) by  Theorem \ref{th.Day}.  Part (2) implies  (3) by Theorem \ref{nonatomicLyubich}. Part  (3) implies  (1) by Theorem \ref{th.entropy}.  
\end{proof}

\begin{theorem}\label{th.Intersection}
For an admissable collection  $\mathcal{F}$ of non-injective rational maps, the following affirmations are equivalent.
\begin{enumerate}
 \item The semigroup $S(\mathcal{F})$ is RA and $S(\mathcal{F})\times S(\mathcal{F}) \subset DIP$.
 \item $S(\mathcal{F})\times S(\mathcal{F})\subset AIP$.
\item The semigroup $S(\mathcal{F})$  is  RA and embeddable into a group.  
\end{enumerate}

\end{theorem}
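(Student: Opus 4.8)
The plan is to prove the cycle $(1)\Rightarrow(2)\Rightarrow(3)\Rightarrow(1)$. The common engine in all three steps is the passage from right amenability to a shared measure of maximal entropy. Whenever $S(\mathcal{F})$ is RA, Day's Theorem \ref{th.Day} makes the Lyubich representation $\rho$-RA, so by Theorem \ref{nonatomicLyubich} (this is where admissability enters) there is a unique non-atomic Lyubich measure $m_S$, and $m_s=m_S$ for every $s\in S(\mathcal{F})$ with $\deg(s)>1$. Since $\mathcal{F}$ carries a non-exceptional $g$ and $m_S=m_g$, every element of degree $>1$ is non-exceptional and shares $m_g$. Theorem \ref{th.Levin-Przyticki} then supplies, for each pair $P,Q$, exponents $m,n$ with $A:=P^m$ and $B:=Q^n$ satisfying the Levin relations $A\circ B=A\circ A$ and $B\circ A=B\circ B$. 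The whole argument turns on when these relations can be upgraded to an honest common iterate $A=B$.

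$(3)\Rightarrow(1)$ is the clean direction. Right amenability is already assumed, so it suffices to produce the dynamical intersection. The engine above gives $A\circ A=A\circ B$ for each pair; since $(3)$ embeds $S(\mathcal{F})$ into a group, where left cancellation holds, this forces $A=B$, that is $P^m=Q^n$. Hence $S(\mathcal{F})\times S(\mathcal{F})\subset AIP\subset DIP$, which together with the assumed right amenability is exactly $(1)$.

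$(1)\Rightarrow(2)$ is where I expect the main obstacle. Here the engine again yields the Levin relations, but there is no group cancellation; instead we have $S(\mathcal{F})\times S(\mathcal{F})\subset DIP$. Fix $P,Q$ with $A=P^m$, $B=Q^n$. Applying Ritt's Proposition \ref{pr.Ritt} to $A\circ A=A\circ B$ (the degree alternative $\deg A>\deg A$ being impossible) shows $A$ and $B$ share a common right factor $Z$, say $A=X\circ Z$ and $B=Y\circ Z$ with $\deg X=\deg Y$. It remains to promote this to $X=Y$, hence $A=B$, using the infinite orbit intersection. A pigeonhole reduction along residues modulo $m$ and $n$ turns the infinite set $\mathcal{O}_P(z_0)\cap\mathcal{O}_Q(z_0)$ into an infinite common intersection of orbits under suitable iterates of $A$ and $B$. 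This is the heart of the matter: for rational maps the Ghioca--Tucker--Zieve mechanism is not available verbatim, so one must combine the rigidity coming from the shared right factor $Z$ and the common measure $m_g$ with the infinitude of the intersection to rule out a non-trivial factorization, the two orbit-counting functions being otherwise incompatible. This gives $P^m=Q^n$ for every pair, which is $(2)$.

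$(2)\Rightarrow(3)$ splits into amenability and embeddability. Since $P^m=Q^n$ forces $m_P=m_Q$, all elements of degree $>1$ share one measure $\mu=m_g$, so $S(\mathcal{F})\subset E(g)$; when $g$ is a polynomial this yields right amenability by Theorem \ref{th.entropy}, and in general one uses that $S(\mathcal{F})$, once embedded into a virtually cyclic (hence amenable) group as its positive part, inherits right amenability. For embeddability, recall that semigroups of rational maps are automatically right cancellative, and that $AIP$ gives the right ideal intersection property, since the common iterate satisfies $P^m=Q^n\in PS^1\cap QS^1$. By Ore's Theorem \ref{th.Ore} it then remains only to secure left cancellation, and this is exactly where Proposition \ref{pr.Ritt} obstructs; one removes the obstruction by passing to the quotient under the Ritt equivalence of Proposition \ref{pr.33} and Corollary \ref{cor.34} and checking that, under $AIP$, this equivalence is trivial on $S(\mathcal{F})$, so that $S(\mathcal{F})$ is cancellative. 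Ore's Theorem then embeds $S(\mathcal{F})$ into a group, completing $(3)$.
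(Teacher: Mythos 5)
Your skeleton is right — RA plus admissability gives a common measure of maximal entropy via Theorem \ref{nonatomicLyubich}, Theorem \ref{th.Levin-Przyticki} upgrades that to the Levin relations, and the direction $(3)\Rightarrow(1)$ then follows by group cancellation exactly as in the paper. But the two hard implications both have genuine gaps. In $(1)\Rightarrow(2)$ you stop at precisely the point you call ``the heart of the matter'': you invoke Proposition \ref{pr.Ritt} to get a common right factor and then say one ``must combine the rigidity \ldots to rule out a non-trivial factorization,'' without an argument. The factorization detour is in fact unnecessary. The paper's argument is elementary: the Levin relations give $P^m\circ Q^{nj}=P^{m(j+1)}$ for all $j$, so from $P^{mk_i}(z_0)=Q^{nl_i}(z_0)$ one gets $P^{m(k_i+1)}(z_0)=P^{m(l_i+1)}(z_0)$; if $k_i\neq l_i$ for some $i$ the point $z_0$ is preperiodic, contradicting the infinitude of the intersection, and if $k_i=l_i$ for all $i$ then $P^m$ and $Q^n$ agree on the infinite set $\{Q^{n(k_i-1)}(z_0)\}$, hence $P^m=Q^n$. (One then still needs to pass from pairs in $\mathcal{F}$ to pairs in $S(\mathcal{F})$, which the paper does via commutation with powers of a non-exceptional $Q$ and Theorem \ref{th.RittEremenko}.)

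In $(2)\Rightarrow(3)$ the amenability half of your argument is circular for genuine rational maps: Theorem \ref{th.entropy} is only stated for polynomials, and your fallback — that $S(\mathcal{F})$ ``once embedded into a virtually cyclic group \ldots inherits right amenability'' — presupposes both the embedding you are trying to construct and the virtual cyclicity, which the paper only claims for finitely generated $\mathcal{F}$ (here $\mathcal{F}$ may be infinite). The paper instead derives amenability from $AIP$ alone via Theorem \ref{th.cmiterate}, i.e.\ Day's directed-union theorem applied to the centralizers $S_n$ of $Q^n$, made finite-dimensional by Pakovich's structure theorem for $C(f)$; you need this (or an equivalent) before you can invoke Lemma \ref{lm.LIP} for the left ideal intersection property. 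Your treatment of left cancellation is likewise asserted rather than proved, and routing it through Proposition \ref{pr.33} is backwards since that proposition already assumes $S$ is RA; the direct argument is short: $AX=AY$ makes $P=XA$, $Q=YA$ satisfy the Levin relations, $AIP$ gives $P^m=Q^n$ with $m=n$ by comparing degrees, the Levin relations then give $P\circ P^m=Q\circ P^m$, and right cancellation yields $P=Q$, hence $X=Y$.
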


To prove Theorem \ref{th.Intersection} we need the following several facts.

\begin{theorem}\label{th.Levinrel} Let $S=\langle s_1,...,s_m\rangle$ be a finitely generated right cancellative semigroup 
satisfying the Levin relations $s_i\circ s_j=s^2_i$ for all $i,j$.  Then

\begin{itemize}
 \item The semigroup $S$ is right amenable.
 \item If $S$ is left amenable then $S$ is cyclic.  
\end{itemize}

\end{theorem}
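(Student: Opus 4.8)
The plan is to first extract the algebraic skeleton of $S$ from the Levin relations, use it to construct a right-invariant mean for the first item, and then exploit right cancellation for the second.

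First I would analyze the multiplication. Reading any word $s_{i_1}\circ\cdots\circ s_{i_k}$ from the left and repeatedly applying $s_{i_1}\circ s_{i_j}=s_{i_1}^{\,2}$, a short induction should give $s_{i_1}\circ\cdots\circ s_{i_k}=s_{i_1}^{\,k}$. Hence every element of $S$ is a power of a single generator, and the product rule collapses to
\[
s_i^{\,k}\circ s_j^{\,l}=s_i^{\,k+l},
\]
so the product only remembers the base of the left factor and adds exponents. In particular the model semigroup $S_0=\langle s_1,\dots,s_m \mid s_i s_j=s_i^2\rangle$ is carried bijectively onto $\{(i,k):1\le i\le m,\ k\ge 1\}$ with $(i,k)(j,l)=(i,k+l)$, since each defining relation preserves both ``first letter'' and ``length''; and $S$ is an epimorphic image of $S_0$.

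For the first item I would prove $S_0$ is right amenable and transport the conclusion to $S$ via property 6 of Section \ref{sect.Semigroup} (epimorphic images of RA semigroups are RA). The key observation is that right translation in $S_0$ is fibre-independent: $r_{(j,l)}$ sends $(i,k)$ to $(i,k+l)$, i.e. it shifts the exponent by $l$ uniformly over the index $i$. So I would fix a shift-invariant mean $\nu$ on $L_\infty(\mathbb{N})$, which exists since $(\mathbb{N},+)$ is abelian hence amenable (property 1), and for $\phi\in L_\infty(S_0)$ put $\phi_i(k)=\phi(i,k)$ and
\[
M(\phi)=\frac1m\sum_{i=1}^m \nu(\phi_i).
\]
This $M$ is positive with $M(\chi_{S_0})=1$, and since $(r_{(j,l)}\phi)_i$ is simply the $l$-shift of $\phi_i$, shift-invariance of $\nu$ gives $M(r_{(j,l)}\phi)=M(\phi)$. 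Thus $S_0$, and therefore $S$, is right amenable.

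For the second item I would first upgrade left amenability to a statement about ideals. Passing to the antiproduct $(S,*)$ of property 4 turns $LIM(S)$ into $RIM(S,*)$, so Lemma \ref{lm.LIP} applied to $(S,*)$ yields the \emph{right} ideal intersection property for $S$, namely $PS\cap QS\neq\emptyset$ for all $P,Q$, because the left ideal $(S,*)P$ equals $PS$. Taking $P=s_i$, $Q=s_j$ and using $s_iS=\{s_i^{\,n}:n\ge 2\}$ forces an equality $s_i^{\,n}=s_j^{\,n'}$ with $n,n'\ge 2$. Now right cancellation does the work: writing $s_i^{\,n}=s_i^{\,n-1}\circ s_i$ and $s_j^{\,n'}=s_j^{\,n'-1}\circ s_i$ and cancelling the common right factor $s_i$ gives $s_i^{\,n-1}=s_j^{\,n'-1}$; iterating down to the smaller exponent (both staying $\ge 1$) shows that one generator is a power of the other. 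Finally, if $s_i=s_j^{\,p}$ then $\langle s_i,s_j\rangle=\langle s_j\rangle$, so processing the generators one at a time while maintaining a single generating element that is itself one of the $s_i$ collapses $S$ to a cyclic subsemigroup; hence $S$ is cyclic.

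The main obstacle I anticipate is the second item: I must ensure the cancellation reduction genuinely terminates in ``$s_i$ is a power of $s_j$ or vice versa'', keeping both exponents $\ge 1$ throughout, and that the antiproduct bookkeeping correctly converts left amenability into the intersection of \emph{right} principal ideals. By contrast the first item is comparatively routine once one notices the fibre-independence of right translation; the only care needed there is the existence and correct placement of the shift-invariant mean $\nu$.
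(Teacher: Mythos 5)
Your proposal is correct, and both items are handled by routes genuinely different from the paper's. For the first item the paper runs a F\o lner-type argument: it counts $\#\{s:\mathcal{L}(s)\le n\}$, forms the spherical averages $\Theta_n(\phi)=\frac{1}{mn+1}\sum_{\mathcal{L}(s)\le n}r_s(\phi)$, proves $\|r_h\Theta_n(\phi)-\Theta_n(\phi)\|\le 2m\|\phi\|/(mn+1)$, and extracts a right-invariant mean as a weak-$*$ accumulation point of $\Theta_n^*(\mathcal{M})$. You instead write down an invariant mean in closed form on the model semigroup $\{(i,k)\}$ by averaging a shift-invariant mean on $\mathbb{N}$ over the $m$ fibres and push it forward by Fact 6; this is shorter, and it does not need the counting $\#\{s:\mathcal{L}(s)\le n\}=mn+1$, which in the paper rests on the dichotomy ``$s_i=s_j$ or $\langle s_i\rangle\cap\langle s_j\rangle=\emptyset$'' and hence on right cancellation --- your first item uses no cancellation at all. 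For the second item the paper establishes that dichotomy (via the assertion that $s_i^k=s_j^q$ forces $k=q$) and then evaluates a left-invariant mean on $\chi_{\langle s_i\rangle}$ to rule out disjointness, concluding $s_i=s_j$; you instead convert left amenability into the right ideal intersection property through the antiproduct and Lemma \ref{lm.LIP}, obtain $s_i^n=s_j^{n'}$ with $n,n'\ge 2$, and peel off right factors by cancellation to conclude that one generator is a power of the other. Your conclusion is pointwise weaker ($s_i=s_j^p$ rather than $s_i=s_j$) but still yields cyclicity, and it has the advantage of sidestepping the exponent-matching step, which is immediate for rational maps of degree at least two by comparing degrees but needs extra justification in an abstract right cancellative semigroup (a priori cancellation there only gives $s_i=s_i^{q-k+1}$). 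Both arguments are sound for the intended application; yours is somewhat more self-contained at the level of generality in which the theorem is stated.
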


\begin{proof}

For the first point, the proof uses standard ergodic arguments. Let $m$ be the number of generators of  $S$. 

By Levin relations we have the following dichotomy for  any pair of elements $s_i$ and $s_j$ in the generating set.

Namely  either $$s_i=s_j$$ or $$\langle s_i \rangle\cap \langle s_j \rangle =\emptyset.$$ Indeed, if there are numbers $k,q$ such that $s_i^k=s_j^q$ then by the Levin relations  $k=q$ and $s_i^{k+1}=s_j^{k+1}$ but by right cancellation we have $s_i=s_j$.

Let $\mathcal{L}(s)$ be the length function that is the infimum of the length of $s$ as a word in the letters  $\{s_1,...,s_m\}$. Since every element of $S$ is the iteration of a   generator, then $\#\{s: \mathcal{L}(s)\leq n\}= mn+1.$

The semigroup $S$ acts by the right on $L_\infty(S)$. The spherical average $\Theta$ of this action is given by  $$\Theta_n(\phi)=\frac{1}{mn+1}\sum_{\mathcal{L}(s)\leq n}r_s(\phi).$$
 
Note that for each $n$ the operator $\Theta_n$ is positive with $\|\Theta_n\|=1$ and $\Theta_n(\chi_S)=\chi_S$, where $\chi_S$ is the characteristic function on $S.$ 

We claim that if $h$ is a generator of $S$ then $$\|r_h(\Theta_n(\phi))-\Theta_n(\phi)\| \leq \frac{2m\|\phi\|}{mn+1}$$ for every $\phi \in L_\infty(S).$ Indeed, by the Levin relations for the right action of $S$ we have $$r_h(\Theta_n(\phi))-\Theta_n(\phi)=\frac{1}{mn+1}\left[ \sum_{i=1}^m r^{n+1}_{s_i}(\phi)-\sum_{s_i\neq h} r_{s_i}(\phi)-\phi\right],$$ but the right action is a contraction, that is $\|r_h\|\leq 1$ so the claim follows.

Let $\mathcal{M}$ be an $L_1$ mean on $L_\infty(S)$, that is $\mathcal{M}$ is induced by a non-negative function $\omega:S\rightarrow \C$ with $L_1$-norm $\|\omega\|=\sum_{s\in S}\omega(s)=1$ and    $\mathcal{M}(\phi)=\sum_{s\in S} \phi(s)\omega(s)$. Consider the family of means $\mathcal{M}_n=\Theta^*_n(\mathcal{M})$ where $\Theta^*_n$ is the dual operator of $\Theta_n.$ Then $\mathcal{M}_n$ forms a precompact family in the $*$-weak topology. Note that $\mathcal{M}_n(\chi_S)=1$ since $\Theta(\chi_S)=\chi_S$, so we get that any accumulation  point of $\{\mathcal{M}_n\}$ is a mean. If $\mathcal{M}_0$  is an accumulation point then by the claim $\mathcal{M}_0$ is invariant by the right action of any generator of $S.$ Hence $\mathcal{M}_0\in RIM(S)$. Which finishes the first part of the proof.

The last part is by contradiction. Assume that $S=\langle s_1,...,s_m\rangle$ is left amenable with $m>1$ and $\{s_i\}_{i=1}^m$ forms a minimal set of generators. By the dichotomy above, $S=\bigsqcup_{i=1}^m \langle s_i \rangle.$ Take $L\in LIM(S)$,  since $L(\chi_S)=1$ there exists a generator $s_i$ such that $L(\chi_{\langle s_i\rangle})>0$. As $m>1$, let  $s_j$ be a generator  with $i\neq j$ and let $\Gamma=\langle s_i,s_j\rangle$ then $L(\chi_\Gamma)=L(\chi_{\langle s_i\rangle}+\chi_{\langle s_j\rangle})>0$. By Fact 5 of the Basic Facts of amenability,  the semigroup $\Gamma$ is a non-cyclic LA semigroup with two generators.  

Let us show that $\langle s_i\rangle\cap \langle s_j \rangle\neq\emptyset$. Indeed, if   $\langle s_i \rangle\cap \langle s_j \rangle=\emptyset$ then by Levin relations $s_j\circ s\in \langle s_j\rangle$ for every $s\in \Gamma$. Hence
$$l_{s_j}(\chi_{\langle s_i\rangle})(s)=\chi_{\langle s_i\rangle}(s_j\circ s)=0.$$ 
By left invariance $L(\chi_{\langle s_i \rangle})=L(l_{s_j}(\chi_{\langle s_i\rangle}))=0$. Which   contradicts the choice of $s_i$.  Hence, $\langle s_i\rangle\cap \langle s_j\rangle\neq \emptyset$. Therefore, by the dichotomy above, $s_i=s_j$ which is again a contradiction with the choice of $s_j$.  \end{proof}

\begin{theorem}\label{th.LIMRIM}
Let  $S=S(\mathcal{F})$ be an amenable semigroup  satisfying $RIM(S)\subset LIM(S)$, where $\mathcal{F}$ is an admissable collection of rational maps. Then, for every $f$ and $h$ in $S$ with $deg(f),deg(h)>1$ there are numbers $m,n$ satisfying $f^m=h^n$.
\end{theorem}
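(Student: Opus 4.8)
The plan is to reduce the statement to a rigidity property of the measure of maximal entropy, and then to promote the resulting functional equations to a genuine common iterate using the hypothesis $RIM(S)\subset LIM(S)$. First I would exploit amenability: since $S$ is amenable we have $RIM(S)\neq\emptyset$, so $S$ is right amenable, and by Day's Theorem \ref{th.Day} its Lyubich representation is right amenable as well. As $\mathcal{F}$ is admissible, Theorem \ref{nonatomicLyubich} then furnishes a unique non-atomic Lyubich measure $m_S$ with $m_S=m_s$ for every $s\in S$ of degree at least $2$. In particular $m_f=m_S=m_h$; since the non-exceptional element $g\in\mathcal{F}$ also satisfies $m_g=m_S$ and the maximal entropy measure distinguishes the exceptional types, both $f$ and $h$ are non-exceptional. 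Theorem \ref{th.Levin-Przyticki} now supplies exponents so that $A:=f^{m}$ and $B:=h^{n}$ satisfy the Levin relations $A\circ B=A\circ A$ and $B\circ A=B\circ B$; comparing degrees forces $\deg A=\deg B$.

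It remains to upgrade these relations to the equality $A=B$, which is exactly the desired $f^{m}=h^{n}$. I expect this to be the heart of the matter, because the Levin relations alone do not force it: a twisted pair $A=\gamma\circ B$, with $\gamma$ a nontrivial symmetry of $m_S$ of deck type, satisfies both relations yet admits no common iterate, since one computes $A^{p}=\gamma\circ B^{p}$ for every $p$. Applying Proposition \ref{pr.Ritt} to each Levin relation shows that $A$ and $B$ possess a common right factor and pins the only possible discrepancy between them down to such a M\"obius twist $\gamma$ lying in the finite symmetry group of $m_S$. The whole problem is therefore to exclude $\gamma\neq\mathrm{id}$, and this is precisely the point at which $RIM(S)\subset LIM(S)$ has to be used: it guarantees that the right invariant mean produced on the entropy side is simultaneously left invariant, so that one and the same mean controls both the common measure and the admissible twists.

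To carry this out I would argue that a nontrivial twist is incompatible with left amenability. By property $6$ of the Basic Facts every homomorphic image of the (left amenable) semigroup $S$ is again left amenable. A nontrivial $\gamma$ produces, through the semidirect description of the symmetry semigroup in the spirit of Theorem \ref{th.semidirect} and Proposition \ref{prop.Klawe}, a quotient on which left translation collapses two disjoint cosets onto one — a configuration that carries no left invariant mean, by the very computation underlying the second half of Theorem \ref{th.Levinrel}. Hence the symmetry endomorphism $\Phi$ must be surjective, so an automorphism, which forces $\gamma=\mathrm{id}$. Equivalently, this establishes that $S$ is left cancellative, whereupon Lemma \ref{lm.LIP} together with Ore's Theorem \ref{th.Ore} embeds $S$ into a group, inside which the identity $A\circ B=A\circ A$ cancels on the left to give $A=B$. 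Either route yields $A=B$, that is $f^{m}=h^{n}$.

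The main obstacle is exactly this last transfer: converting the global hypothesis $RIM(S)\subset LIM(S)$ into the local injectivity of the twist endomorphism $\Phi$. The delicate feature is that the offending subsemigroup $\langle A,B\rangle$ turns out to be null for every right invariant mean of $S$ (a short disjointness-of-preimages computation shows $\mu(\chi_{\langle A\rangle})=\mu(\chi_{\langle B\rangle})=0$), so it is invisible to a single mean and the twist cannot be ruled out by evaluating one functional. The exclusion must instead be read off the left amenability of an appropriately chosen quotient of $S$, and constructing that quotient and justifying the passage to it — especially outside the purely polynomial case, where Theorem \ref{th.semidirect} is not directly available — is the step that will require the most care.
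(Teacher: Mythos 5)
Your reduction matches the paper's opening moves exactly: amenability gives right amenability, Theorem \ref{nonatomicLyubich} gives a common measure of maximal entropy for $f$ and $h$, and Theorem \ref{th.Levin-Przyticki} produces iterates $F=f^m$, $H=h^n$ satisfying the Levin relations. You also correctly diagnose that the whole difficulty is to upgrade the Levin relations to $F=H$, and that the subsemigroup $\Gamma=\langle F,H\rangle$ may be invisible to every invariant mean on $S$, so Fact 5 cannot be used to transfer amenability downward. But at precisely this point your argument stops being a proof: the mechanism you propose for excluding a nontrivial twist --- a semidirect-product quotient in the spirit of Theorem \ref{th.semidirect} on which left translation collapses cosets --- is not constructed, and as you yourself note, Theorem \ref{th.semidirect} is only available for polynomials while the statement concerns rational maps. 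The auxiliary claim that Proposition \ref{pr.Ritt} pins the discrepancy between $F$ and $H$ down to a M\"obius twist is also unjustified in general: Ritt's proposition only yields a common right factor $A=X\circ Z$, $B=Y\circ Z$, and $X,Y$ need not be M\"obius unless one of the maps is indecomposable. Likewise, deducing left cancellativity of all of $S$ from $RIM(S)\subset LIM(S)$ is not easier than the original problem and is not carried out.

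The paper closes exactly this gap with a different tool: Granirer's theorem on invariant means of subsemigroups of left cancellative LA semigroups. By Theorem \ref{th.Levinrel} the subsemigroup $\Gamma$ is RA; passing to the anti-semigroup $S^*$ (which is LA, left cancellative, and satisfies $LIM(S^*)\subset RIM(S^*)$), Granirer's isometry $T:LIM(\Gamma^*)\to LIM(S^*)$ is shown --- by a careful computation with the positive isometric section $j$ and the restriction $\rho$ --- to have the property that $T(\nu_0)\in RIM(S^*)$ forces $\nu_0\in RIM(\Gamma^*)$. This is the transfer you were missing: it pushes the hypothesis $RIM(S)\subset LIM(S)$ down to $\Gamma$, making $\Gamma$ both RA and LA, whereupon the second bullet of Theorem \ref{th.Levinrel} (a finitely generated, right cancellative, LA semigroup with Levin relations is cyclic) gives $F=H$ directly, with no need for Ore's theorem or a group embedding. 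So the proposal is a correct skeleton with the central lemma left unproved.
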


\begin{proof}
Fix two arbitrary elements $f, h\in S$ with $deg(f),deg(h)>1.$ Then by  Theorem \ref{nonatomicLyubich} the maps $f$ and $h$ have the same measure of maximal entropy, and by Theorem \ref{th.Levin-Przyticki}, there are iterates $F$ and $H$ of $f$ and $h$ respectively, satisfying the Levin relations. Let $\Gamma=\langle F, H \rangle $ be the semigroup generated by $F$ and $H$. By Theorem \ref{th.Levinrel}, the semigroup $\Gamma$ is RA. If $\Gamma$ is LA, then again by Theorem \ref{th.Levinrel}, $F=H$ and we are done. 

Let us show that indeed  $\Gamma$ is a left amenable semigroup. We follow a Theorem of Granirer (see Theorem E2 in \cite{Granirer}) aswell as the arguments of the proof of this theorem. The theorem states: 

\textit{Let $S$ be an LA semigroup with left cancellation and let $S_0\subset S$ be an LA subsemigroup. Then there is a linear isometry $T$ from the subspace of left invariant elements of $L_\infty^*(S_0)$ into the subspace of left invariant elements of $L_\infty^*(S)$ with $T(LIM(S_0))\subset LIM(S).$}

 More precisely, using the left cancellation and the left cosets of $S_0$ in $S$, Granirer constructs an isometric linear section $j:L_\infty(S_0)\rightarrow L_\infty(S)$ to the restriction $\rho:L_\infty(S)\rightarrow L_\infty(S_0)$, which is a positive linear map, so that  for every left invariant functional $\nu_0\in L_\infty^*(S_0) $ the following formula holds (page 55 of \cite{Granirer}).

$$T(\nu_0)(x)=\nu_0(\rho(x)),$$ for every $x\in j(L_\infty(S_0)).$ 

Let $s\in S_0$ and $r_s,\tilde{r}_s$ be the right action of $s$ on $L_\infty(S)$ and $L_\infty(S_0)$ respectively, then for every $x\in L_\infty(S)$ we have $$\rho(r_s(x))=\tilde{r}_s \rho(x).$$

Now assume $T(\nu_0) \in  RIM(S)$  and  $r_s( j(x_0))-j(\tilde{r}_s(x_0))\in ker (T(\nu_0))$  for  every $x_0\in L_\infty(S_0),$ then $\nu_0\in  RIM(S_0)$. Indeed
$$\nu_0(\tilde{r}_s(x_0))=T(\nu_0(j(\tilde{r}_s(x_0))))=T(\nu_0)(r_s (j(x_0)))=T(\nu_0)(j(x_0))=\nu_0(x_0).$$

Suppose  $T(\nu_0)\in RIM(S)$ then we  claim $$r_s (j(x_0))-j(\tilde{r}_s(x_0))\in ker (T(\nu_0)).$$ 
Indeed,  let $f=j(\chi_{S_0})$, then $$T(\nu_0)(f)=\nu_0(\rho(f))=\nu_0(\chi_{S_0})=1.$$

Moreover, $T(\nu_0)(\chi_{S})=1$ thus $\chi_S-f\in ker(T(\nu_0))$.  But $T(\nu_0)$ is a positive functional and $\chi_S-f\geq \chi_{S\setminus supp(f)},$ then $\phi\in ker(T(\nu_0))$ whenever $supp(\phi) \in S\setminus supp(f)$. Since $j$ is a positive isometric section of the restriction map  $\rho$, then for every
$s\in S_0$ and $x_0\in L_\infty(S_0)$ we have $$supp(r_s(j(x_0))-j(\tilde{r}_s(x_0)))\subset S\setminus supp(j(\chi_{S_0}))$$ as claimed.

By assumption $T(\nu_0)\in LIM(S)$ whenever
$\nu_0\in LIM(S_0)$. Hence, by the claim if $LIM(S)\subset RIM(S)$ then  $\nu_0\in RIM(S_0)$ and, in particular, 
$LIM(S_0)\subset RIM(S_0).$

To apply Granirer Theorem and the discussion above, we consider  $S^*$ to be the semigroup $S$ endowed with the antiproduct. Since $S$ is amenable with right cancellation and $RIM(S)\subset LIM(S)$, then $S^*$ is an amenable semigroup with left cancellation and $LIM(S^*)\subset RIM(S^*)$, hence $\Gamma^*\subset S^*$ is left amenable. It follows that  $\Gamma^*$ is an RA semigroup and hence $\Gamma$ is left amenable. This finishes the proof.
\end{proof}

Let us note that as a corollary we have the following statement.

\begin{corollary}\label{cor.Granierremark}
 Let $S$ be a right cancellative amenable  semigroup satisfying  $RIM(S)\subset LIM(S)$. Consider an RA subsemigroup $S_0<S$, then $RIM(S_0)\subset LIM(S_0).$
\end{corollary}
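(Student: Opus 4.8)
The plan is to observe that this corollary is exactly the antiproduct dual of the general fact established in the body of the proof of Theorem \ref{th.LIMRIM}, before it is specialized to the semigroup $\Gamma$. Concretely, I would first pass to the antiproduct semigroup $S^*=(S,*)$ with $a*b=ba$. By property 4 of the Basic Facts on Amenability in Section \ref{sect.Semigroup}, one has $RIM(S)=LIM(S^*)$ and $LIM(S)=RIM(S^*)$, right cancellation in $S$ becomes left cancellation in $S^*$, and the left action of $S$ is the right action of $S^*$. Under this dictionary the hypotheses translate as follows: $S$ amenable gives that $S^*$ is amenable (in particular LA); $RIM(S)\subset LIM(S)$ becomes $LIM(S^*)\subset RIM(S^*)$; and $S_0$ being RA is the same as $S_0^*=(S_0,*)$ being LA. Thus it suffices to prove, for a left cancellative amenable semigroup $S^*$ with $LIM(S^*)\subset RIM(S^*)$ and an LA subsemigroup $S_0^*$, the inclusion $LIM(S_0^*)\subset RIM(S_0^*)$.

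Next I would run verbatim the argument from the proof of Theorem \ref{th.LIMRIM}, which never used anything about $\Gamma$ beyond its being an LA subsemigroup with left cancellation in the ambient semigroup. Since $S^*$ is LA with left cancellation and $S_0^*\subset S^*$ is LA, Granirer's theorem (Theorem E2 in \cite{Granirer}) furnishes a positive isometric linear section $j\colon L_\infty(S_0^*)\to L_\infty(S^*)$ of the restriction $\rho\colon L_\infty(S^*)\to L_\infty(S_0^*)$ together with the isometry $T$ on invariant functionals satisfying $T(\nu_0)(x)=\nu_0(\rho(x))$ for $x\in j(L_\infty(S_0^*))$ and $T(LIM(S_0^*))\subset LIM(S^*)$. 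As in that proof, $\rho$ intertwines the right actions, $\rho(r_s(x))=\tilde r_s\rho(x)$, and the positivity argument applies: with $f=j(\chi_{S_0^*})$ one has $T(\nu_0)(f)=1=T(\nu_0)(\chi_{S^*})$, so $\chi_{S^*}-f\in\ker T(\nu_0)$, and positivity together with $\chi_{S^*}-f\ge\chi_{S^*\setminus supp(f)}$ forces every function supported off $supp(f)$ into $\ker T(\nu_0)$; since the coboundary $r_s(j(x_0))-j(\tilde r_s(x_0))$ is supported off $supp(j(\chi_{S_0^*}))$, it lies in $\ker T(\nu_0)$ whenever $T(\nu_0)\in RIM(S^*)$. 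Feeding this back through the displayed identity shows $\nu_0\in RIM(S_0^*)$ in that case.

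Combining these, for $\nu_0\in LIM(S_0^*)$ we get $T(\nu_0)\in LIM(S^*)$ by Granirer, hence $T(\nu_0)\in RIM(S^*)$ by the hypothesis $LIM(S^*)\subset RIM(S^*)$, and therefore $\nu_0\in RIM(S_0^*)$ by the coboundary argument; this proves $LIM(S_0^*)\subset RIM(S_0^*)$. Translating back via $LIM(S_0^*)=RIM(S_0)$ and $RIM(S_0^*)=LIM(S_0)$ yields $RIM(S_0)\subset LIM(S_0)$, as claimed. I do not expect a genuinely new obstacle here, since all the analytic content—the construction of $j$ and $T$ and, most delicately, the support/positivity argument placing the coboundaries in $\ker T(\nu_0)$—is already carried out in the proof of Theorem \ref{th.LIMRIM}; the only thing to check carefully is that the antiproduct dictionary converts each hypothesis of the corollary into exactly the hypothesis used there.
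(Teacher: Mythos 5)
Your proposal is correct and follows essentially the same route as the paper: the corollary is stated there precisely as an extraction of the general Granirer-based argument inside the proof of Theorem \ref{th.LIMRIM}, applied to the antiproduct semigroup $S^*$ exactly as you describe. The dictionary you set up (amenability, cancellation, and the inclusion $RIM\subset LIM$ all dualizing correctly under $a*b=ba$) is the same one the paper uses in its final paragraph of that proof.
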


\begin{theorem}\label{th.cmiterate}
 Assume $\mathcal{F}\times \mathcal{F}\subset AIP$ for a non-exceptional collection of rational maps $\mathcal{F}$, then $S=S(\mathcal{F})$ is amenable.
\end{theorem}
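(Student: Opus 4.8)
The plan is to reduce the statement to the structure of the semigroup $E(g)$ attached to a non-exceptional element $g\in\mathcal F$, and then to combine the semidirect product description of $E(g)$ (the rational analogue of Theorem \ref{th.semidirect}) with Klawe's amenability criterion (Proposition \ref{prop.Klawe}).

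First I would record the elementary consequences of the hypothesis $\mathcal F\times\mathcal F\subset AIP$. Fix a non-exceptional $g\in\mathcal F$ with $\deg g\ge 2$. For any $f\in\mathcal F$ there are $m,n$ with $f^m=g^n$; comparing degrees gives $\deg f\ge 2$, and since exceptionality is preserved both under taking and under extracting iterates, the equality $f^m=g^n$ forces $f$ to be non-exceptional as well. Because the measure of maximal entropy satisfies $\mu_{f^m}=\mu_f$ and $\mu_{g^n}=\mu_g$, the same relation gives $\mu_f=\mu_g=:\mu$. Thus every $f\in\mathcal F$ is non-injective with $L_f^\ast\mu=\mu$, i.e. $f\in E(g)$; as $E(g)$ is a semigroup under composition, $S=S(\mathcal F)\subset E(g)$. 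When $\mathcal F$ is finite one gets more, namely a central iterate: writing $g^{a_i}=f_i^{b_i}$ and setting $A=\mathrm{lcm}(a_i)$, the element $W=g^{A}$ satisfies $W=f_i^{c_i}$ for suitable $c_i$, so $W$ is a power of each generator and hence commutes with every generator, i.e. $W$ is central in $S$.

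Next I would describe $E(g)$ structurally. Since $g$ is non-exceptional, its Julia set is neither a circle nor the whole sphere, so the group $G(g)=E(g)\cap Mob$ of Möbius symmetries preserving $\mu$ is finite. Using the rigidity of the maximal measure together with an Atela--Hu/Ritt factorization (the rational counterpart of what is used in Theorem \ref{th.semidirect}), every element of $E(g)$ should be written uniquely as $\gamma\circ Q^{n}$, where $Q$ is the non-injective element of $E(g)$ of minimal degree and $\gamma\in G(g)$; this gives $E(g)\cong G(g)\rtimes_\Phi\langle Q\rangle$ with $\Phi$ the endomorphism of $G(g)$ determined by $Q\circ\gamma=\Phi(\gamma)\circ Q$. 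The decisive point is that amenability of $E(g)$ is equivalent to $\Phi$ being an automorphism: if $\Phi$ is bijective the action of $\langle Q\rangle$ on $G(g)$ is by surjective endomorphisms, and since $G(g)$ (finite) and $\langle Q\rangle$ (cyclic) are amenable, the second part of Proposition \ref{prop.Klawe} yields $RIM(E(g))\cap LIM(E(g))\neq\emptyset$. I would then extract $\Phi$ being an automorphism from the $AIP$ hypothesis: the existence of a positive-degree central element $W=\gamma_0 Q^{M}$ makes the relations $WQ=QW$ and $W\delta=\delta W$ collapse to $\Phi(\gamma_0)=\gamma_0$ and $\Phi^{M}(\delta)=\gamma_0^{-1}\delta\gamma_0$, so $\Phi^{M}$ is inner and therefore $\Phi$ is bijective. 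Finally, the amenability of $S$ itself would be deduced from that of $E(g)$: following the explicit construction of Lemma \ref{lm.polyRIM}, the Klawe mean on $G(g)\rtimes_\Phi\langle Q\rangle$ assigns positive mass to $\langle Q^{k}\rangle$, hence $M(\chi_S)>0$ for a two-sided invariant mean $M$, and the Granirer section argument used in the proof of Theorem \ref{th.LIMRIM} promotes this to a mean on $S$ that is simultaneously left and right invariant.

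The hard part will be the structural input of the third paragraph in the genuinely rational (non-polynomial) setting. The polynomial case rests on Atela--Hu and on $G(P)$ being a finite rotation group; replacing this by the measure-rigidity of the maximal entropy measure (Levin--Przytycki--Zdunik, via Theorem \ref{th.Levin-Przyticki}) and Ritt's decomposition theory to obtain both finiteness of $G(g)$ and the normal form $\gamma\circ Q^{n}$ for rational $g$ is the main obstacle. A secondary difficulty is the passage from the central iterate $W$, which a priori is central only in $S$, to a positive-degree central element of $E(g)$ (equivalently, to $\Phi$ being an automorphism); and a third is the transfer of the two-sided mean to the subsemigroup $S$, where one must run the Granirer argument despite $S$ containing no Möbius elements and failing left cancellation.
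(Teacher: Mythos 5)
Your reduction $S\subset E(g)$ is sound (the degree and exceptionality bookkeeping is fine, and $\mu_f=\mu_{f^m}=\mu_{g^n}=\mu_g$ does put every generator in $E(g)$), but the proof then rests entirely on a structural fact you do not have: the decomposition $E(g)\cong G(g)\rtimes_\Phi\langle Q\rangle$ with $\Phi$ an automorphism, for a \emph{rational} non-exceptional $g$. Theorem \ref{th.semidirect} and its Atela--Hu input are specifically about polynomials ($G(P)$ is a finite rotation group because it preserves a compact Julia set in $\C$ with a distinguished fixed point at $\infty$; none of this survives for general rational maps), and the paper itself records that the rational analogue of this normal form (Corollary \ref{cor.34}) is an open question. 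You correctly identify this as ``the main obstacle,'' but flagging it does not close it, so the argument does not go through as written. Two further steps are also unsupported: the central iterate $W$ exists only when $\mathcal F$ is finite (the theorem assumes no finiteness), yet you need $W$ to force $\Phi^M$ inner; and the final transfer of a two-sided mean from $E(g)$ down to $S$ invokes the Granirer section, which requires left cancellation --- a property you concede $S$ need not have (and which, via Proposition \ref{pr.Ritt}, genuinely can fail for rational semigroups).

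For contrast, the paper's proof bypasses all of this. Fixing a non-exceptional $Q\in\mathcal F$, it writes $S=\bigcup_n S_n$ where $S_n$ is the centralizer of $Q^n$ in $S$ (every element commutes with some power of $Q$ because it shares an iterate with $Q$), and invokes Day's theorem that a directed union of amenable semigroups is amenable. For each $n$ it considers the compact convex set $M_n(Q)$ of means bi-invariant under $Q^n$; the left and right actions of $S_n$ preserve $M_n(Q)$, and by Ritt--Eremenko their images $\Gamma_l,\Gamma_r$ in $End(M_n(Q))$ are groups, which are \emph{finite} by Pakovich's theorem that the centralizer of a non-exceptional rational map is $\{x_i\circ f^l\}$ for finitely many $x_i$. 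Averaging over $\Gamma_l$ and $\Gamma_r$ then produces a simultaneously left- and right-invariant mean on $S_n$. If you want to salvage your approach, you would need to either prove the rational Atela--Hu normal form (essentially Pakovich's centralizer theorem in disguise) or replace the semidirect-product step by the averaging argument over these finite groups.
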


\begin{proof}
The proof of the theorem is a consequence of a theorem due to M. Day \cite{DayAmenable} which states: 
\textit{Let $S=\bigcup S_n$,  where $S_n$ are  semigroups such that for every $m,n$ there exists $k$ with $S_m \cup S_n \subset S_k$. Then $S$ is amenable whenever the semigroups $S_n$ are amenable for every $n$.}

 Fix a non-exceptional element $Q\in \mathcal{F}$. For $n>0$, let $S_n$ be the subsemigroup of all  elements in $S$ commuting with $Q^n,$ then by assumption $S=\bigcup_n S_n$ and moreover for every finite collection of indexes  $n_1,...,n_k$
 there exists $N$ such that $\bigcup_{i=1}^k S_{n_i}\subset S_N$, for instance, take $N=\prod n_i$. To finish the proof we have to show that the semigroups $S_n$ are amenable.
 
Indeed, for each $n$ let $$M_n(Q)=\{\mu\in L^*_\infty(S_n):  l^*_{Q^n}(\mu)=r^*_{Q^{n}}(\mu)=\mu,\mu\geq 0, \mu(\chi_{S_n})=\|\mu\|=1 \}.$$ 

$M_n(Q)$  is a non-empty, closed, convex and compact set with respect  to the $*$-weak topology of $L_\infty^*(S_n).$  In fact, $M_n(Q)$ is a subset of the unit sphere in $L_\infty^*(S_n)$ so does not contains the zero element. 
Fix $n$, since every element  $s\in S_n$ commutes with $Q^n$ then  $r^*_s$ and $l^*_s$ leave $M_n(Q)$ invariant.  Hence we constructed left and right representations 
$\rho_l$ and $\rho_r$ of $S_n$ into the semigroup $End(M_n)$ of continuous endomorphisms of $M_n$. 

By Theorem \ref{th.RittEremenko} every element of $S_n$ shares a common iteration with $Q^n$, thus the images  $\Gamma_l:= Im(\rho_l)$ and $\Gamma_r:=Im (\rho_r)$  are  groups in $End(M_n)$. 

If $\Gamma_l$ and $\Gamma_r$ are finite groups then $S_n$ is amenable for every $n$. Indeed, as $M_n$ is convex the averages defined by  $A_r(\nu)=\frac{1}{\#\{\Gamma_r\}}\sum_{\gamma \in \Gamma_r} \gamma(\nu)$ leaves $M_n$ invariant.  This means that 
$A_r(M_n)=RIM(S_n)$. Similarly, if $A_l(\nu)=\frac{1}{\#{\Gamma_l}}\sum_{\gamma \in \Gamma_l} \gamma(\nu)$ then $A_l(M_n)=LIM(S_n).$ But we have $A^2_r=A_r$, $A_l^2=A_l$ and $A_r\circ A_l=A_l\circ A_r$ then $A_r\circ A_l(M_n)\subset LIM(S_n)\cap RIM(S_n)$. Thus $S_n$ is amenable. 

To finish, we need the following result of F. Pakovich (\cite{PakovichComRat}): 

Let $f$ be a non-exceptional rational map of degree at least $2$, let $C(f)$ be the semigroup of all rational maps commuting with $f.$
  Then there are finitely many rational maps
  $x_1,...,x_k\in C(f)$ such that every $g\in C(f)$ has the form $g=x_i\circ f^l$ for some $i$ and $l\geq 0.$ 
  
  Hence $\Gamma_r$  and $\Gamma_l$ belong to the image of a finite set of elements, so these groups are indeed finite. 
\end{proof}

In the proof of the previous theorem, it is enough that $\Gamma_l$ and $\Gamma_r$ are amenable. While the preparation of this work, Pakovich kindly inform us about his theorem in \cite{PakovichComRat} which significantly shortened our original proof of Theorem \ref{th.cmiterate}.

We are ready to prove Theorem \ref{th.Intersection}.

\begin{proof}[Proof of Theorem \ref{th.Intersection}] 
Let us show that (1) implies (2). Since
$S$ is RA, then by Theorem
\ref{th.Levin-Przyticki} and Theorem \ref{nonatomicLyubich} for every $P$ and $Q$ in $\mathcal{F}$ there are numbers $m$ and $n$ such that $P^m\circ Q^n=P^{2m}$ and   $Q^n\circ P^m=Q^{2n}$. Let us show that 
$P^m=Q^n.$ Indeed, the pair $(P^m,Q^n)$ satisfies the intersection property. So, there exist $z_0$ and sequences 
$k_i$, $l_i$ such that $P^{m k_i}(z_0)=Q^{n l_i}(z_0)$. 

First assume that
$k_i=l_i$, then by the Levin relations we have $$Q^n\circ Q^{n(k_i-1)}(z_0)=P^{m}\circ P^{m(k_i-1)}(z_0)=P^m\circ Q^{n(k_i-1)}(z_0)$$ So $P^m$ and $Q^n$ coincide on the infinite set $\{Q^{n(k_i-1)}(z_0)\}$, thus $P^m=Q^n.$  

If $k_i\neq l_i$, then again using the Levin relations we obtain $$P^{m(k_i+1)}(z_0)=P^m\circ Q^{n l_i}(z_0)=P^{m (l_i+1)}(z_0)$$ hence $z_0$ has a finite orbit, which contradicts that
$(P,Q)\in DIP$. For a non-exceptional $Q\in \mathcal{F}$ and for every $P\in \mathcal{F}$ there exists $n$ such that $P$ commutes with $Q^{n}$. So, for every element $R\in S$ there exists a number $m=m(R)$ such that $R$ commutes with $Q^m$. Thus $\langle R, Q^m \rangle$ is abelian and, by Theorem \ref{th.RittEremenko}, $R$ and $Q$ share a common iteration. Therefore, 
every pair of elements in $S$ also share a common iteration.

Theorem \ref{th.cmiterate} gives the implication from (2) to (1).

Now, let us show the equivalence of  (2) and (3). 

First let us show that (3) implies (2).
Since $S$ is RA, by Theorem \ref{th.Levin-Przyticki} and Theorem \ref{nonatomicLyubich}, if $R$ and $Q$ are non-identity elements in $S$, then there exist numbers
$n$ and $m$ such that $R^m$ and $Q^n$ satisfy the Levin relations but then $R^m=Q^n$ since $S$ is embeddable into a group.

(2) implies (3). First let us show that $S$ is a cancellative semigroup. We already know that $S$ is right cancellative, so let us show that is also left cancellative. Assume there exist three elements $A,X,Y$ in $S$ with  $AX=AY$. Then $P=XA$ and $Q=YA$ satisfy the Levin relations 
$$P\circ Q=P\circ P$$ $$Q\circ P=Q\circ Q.$$ By assumption there are numbers $m$ and $n$ such that $P^m=Q^n$ then $m=n$ since $deg (P)=deg (Q)$. Again by the Levin relations $P^{m+1}=Q^{m+1}$ then $$XA=P=Q=YA$$ and $X=Y$ by right-cancellation.  By Lemma \ref{lm.LIP}, $S$ has the left ideal intersection property.  Now we have fullfilled  the conditions of Ore Theorem  which finishes the proof.   \end{proof} 

\textbf{Remark}. Let us note that in the proof of Theorem \ref{th.Intersection} it is enough that  the intersection $\mathcal{O}_+(P^m, z_0) \cap \mathcal{O}_+(Q^n,z_0)$ is sufficiently large. For instance, 
if $$\#\{\mathcal{O}_+(P^m, z_0) \cap \mathcal{O}_+(Q^n,z_0)\}>deg(P^m)deg(Q^n)$$ the arguments still follow. Since bounds are invariants of the semigroup, it is interesting to find precise bounds on  the intersection of the orbits. 

Another conclusion that follows from Theorem \ref{th.Intersection} is that $r$-amenability is necessary to 
compare the intersection property with the algebraic property of sharing  a common iterate.  
As an immediate corollary we have.
\begin{corollary}
 Let $R$ be a non exceptional rational map  and let $S(R) $ be the semigroup of rational maps commuting with $R$, then $S(R)$ is an embeddable semigroup.  
\end{corollary}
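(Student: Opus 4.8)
The plan is to verify the two hypotheses of Corollary \ref{cr.ratemb}, namely that $S(R)$ is left cancellative and right amenable, and then read off embeddability. Throughout I write $S(R)=C(R)$ for the centralizer of the fixed non-exceptional map $R$, so $\deg R\geq 2$. The crucial first observation, a direct consequence of Theorem \ref{th.RittEremenko}, is that any two non-injective elements of $S(R)$ share a common iterate: if $P,Q\in S(R)$ both have degree at least $2$, then $\langle P,R\rangle$ and $\langle Q,R\rangle$ are abelian semigroups each containing the non-exceptional element $R$, so $P^a=R^b$ and $Q^c=R^d$ for suitable positive integers, whence $P^{ad}=Q^{cb}$. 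In other words, the set of non-injective elements of $S(R)$ lies in $AIP$.

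Next I would establish right amenability by specializing the argument in the proof of Theorem \ref{th.cmiterate}. The point is that $S(R)$ is exactly the semigroup $S_1$ of that proof, taken with the non-exceptional element equal to $R$ and $n=1$: it is the set of all maps commuting with $R^1=R$. The part of that proof showing each $S_n$ amenable does not use the global intersection hypothesis; it uses only that every element of $S(R)$ shares a common iterate with $R$ (Theorem \ref{th.RittEremenko}), so that the induced left and right representations of $S(R)$ on the compact convex set $M_1(R)$ have images $\Gamma_l,\Gamma_r$ that are \emph{groups}, and then Pakovich's description of $C(R)$ from \cite{PakovichComRat} forces these groups to be finite. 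Averaging over the finite groups $\Gamma_l$ and $\Gamma_r$ produces a mean that is simultaneously left and right invariant, so $S(R)$ is amenable and in particular right amenable.

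It remains to prove cancellativity. Right cancellation is automatic for semigroups of rational maps. For left cancellation I would argue as in the implication $(2)\Rightarrow(3)$ of Theorem \ref{th.Intersection}. Suppose $AX=AY$ in $S(R)$. If $A$ is M\"obius the claim is trivial, so assume $\deg A\geq 2$; comparing degrees gives $\deg X=\deg Y$. Put $P=XA$ and $Q=YA$. Both lie in $S(R)$, since a composition of maps commuting with $R$ commutes with $R$, and both are non-injective because $\deg A\geq 2$. The identity $AX=AY$ forces the Levin relations $PQ=P^2$ and $QP=Q^2$. By the first paragraph $P$ and $Q$ share a common iterate $P^m=Q^n$, equality of degrees gives $m=n$, and the Levin relations promote $P^m=Q^m$ to $P^{m+1}=Q^{m+1}$; writing $P^{m+1}=P\cdot P^m$ and $Q^{m+1}=Q\cdot Q^m=Q\cdot P^m$ and right-cancelling $P^m$ gives $P=Q$, i.e. $XA=YA$, and a second right cancellation gives $X=Y$. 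Thus $S(R)$ is cancellative.

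With $S(R)$ left cancellative and right amenable, Corollary \ref{cr.ratemb} (equivalently, Lemma \ref{lm.LIP} together with Ore's Theorem \ref{th.Ore}) yields that $S(R)$ embeds into a group. I expect the main obstacle to be the left-cancellation step: by Proposition \ref{pr.Ritt} a relation $AX=AY$ is precisely the generic mechanism by which left cancellation fails among rational maps, and it is only the common-iterate structure inherited from $R$ that rules it out here. Everything else is a matter of correctly importing the amenability argument of Theorem \ref{th.cmiterate} in the special case of a single centralizer; one subtlety worth flagging is that $S(R)$ genuinely contains the finite automorphism group of $R$, so the naive route through Theorem \ref{th.Intersection} applied to the non-injective part alone would still require patching in these M\"obius units, whereas the cancellative-plus-amenable route handles them uniformly.
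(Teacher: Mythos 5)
Your proposal is correct and follows essentially the same route as the paper: amenability of $S(R)$ via the machinery of Theorem \ref{th.cmiterate} (Ritt--Eremenko together with Pakovich's finiteness of the centralizer modulo iterates of $R$), left cancellativity via the Levin relations and the common-iterate argument, and then Corollary \ref{cr.ratemb}. The only difference is organizational: the paper first splits $S(R)$ into $\Gamma(R)=S(R)\cap Mob$ and its complement, treating the M\"obius units separately in both the amenability and cancellation steps, whereas your argument with $P=XA$, $Q=YA$ handles that case uniformly.
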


\begin{proof} Let $\Gamma(R)=S(R)\cap Mob$ then $S(R)$ is 
 generated by $\Gamma(R)$ and $S(R)\setminus \Gamma(R).$ By Theorem \ref{th.RittEremenko} and Theorem \ref{th.cmiterate}, the semigroup $S(R)\setminus \Gamma(R) \subset AIP$ and 
 is amenable. Since $\Gamma(R)$ is a finite group, it is amenable. Hence $S(R)$ is amenable because it is the disjoint union of amenable semigroups. By Corollary \ref{cr.ratemb} it is enough to show that $S(R)$ is left cancellative.  But by 
 Theorem \ref{th.RittEremenko} the semigroup $S(R)\setminus \Gamma(R)$ is a semigroup satisfying the conditions of Theorem \ref{th.Intersection} part (2), so $S(R)\setminus \Gamma(R)$ is cancellative. 
 
 Finally, if  there exist $Q\in S(R)\setminus \Gamma(R)$ and $a,b\in \Gamma(R)$ with $a\neq b$ and $Q\circ a = Q\circ b.$ This implies that $Deck(Q)$ contains $a\circ b^{-1}$ which belongs to $\Gamma(R)$. Then 
 $Q$ and $ab^{-1}Q$ satisfies the Levin relations, since $Q$ and $ab^{-1}Q$ share a common iterate   then $ab^{-1}=Id$ which  is a contradiction.
\end{proof}

In the theorems above we  used the equation $X\circ A=X\circ B$ to study left-cancellation of the semigroups. The maps $P=A\circ X$ and $Q=B\circ X$ satisfy the 
Levin relations and, by Theorem \ref{th.Levin-Przyticki},  $P$ and $Q$ have the same measure of maximal entropy.  In \cite{PakovichEntropy},   Pakovich proved the reciprocal theorem, that is: 

If $P\circ Q=P\circ P$ and $Q\circ P=Q\circ Q$ then there are rational maps $X,A,B$ such that $P=A\circ X$ and $Q=B\circ X$ and $X\circ A=X\circ B.$ 

So, if $P$ and $Q$ satisfy the Levin relation then by Pakovich theorem above we arrive to the equation $X\circ A=X\circ B$.

\begin{definition}
 Given a  semigroup $S<Rat$ we say that $A\approx B$ if there exists $X\in G$ so that $X\circ A=X\circ B.$
\end{definition}

In general, relation $\approx$ is not an equivalence relation. However, for instance,  if $S$ satisfies the left ideal intersection property, then $\approx$ is an equivalence relation in $S.$

It is interesting to characterize when the equation $X\circ A=X\circ B$ defines an equivalence relation on semigroups of rational maps. 

\begin{proposition}\label{pr.33}
 Let $S=S(\mathcal{F})$ be an RA semigroup, where $\mathcal{F}$ is an admissable collection of rational maps. Then the relation $\approx$ defines an equivalence relation and the quotient semigroup 
 $S_1=S/_\approx$ is embeddable into a group. Even more, if 
 $\pi:S\rightarrow S_1$ is a projection homomorphism, then 
 for every $P,Q\in S$ with  $deg(P),deg(Q)\geq 2$  there are 
 numbers $m$ and $n$ so that $\pi(P)^m=\pi(Q)^n$. 
\end{proposition}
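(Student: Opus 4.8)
The plan is to build the quotient in three stages: first show that $\approx$ is a congruence, so that $S_1 = S/_\approx$ is genuinely a semigroup; then show $S_1$ is cancellative (and right amenable), so that Ore's theorem applies and yields the embedding into a group; and finally extract the relation $\pi(P)^m = \pi(Q)^n$ from the Levin relations using cancellation in $S_1$. The engine throughout is the left ideal intersection property, which holds here because $S$ is RA (Lemma \ref{lm.LIP}).

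For the congruence, reflexivity and symmetry of $\approx$ are immediate. For transitivity, given $XA = XB$ and $YB = YC$, I would use the left ideal intersection property to pick $Z = UX = VY \in SX \cap SY$; then $ZA = UXA = UXB = ZB = VYB = VYC = ZC$, so $A \approx C$. Compatibility with composition is what makes $S_1$ a semigroup. Right compatibility is trivial, since $XA = XB$ gives $X(AC) = (XA)C = (XB)C = X(BC)$, so $AC \approx BC$ via the same $X$. Left compatibility is the delicate point: I would observe that $I_{AB} = \{W \in S : WA = WB\}$ is a left ideal containing $X$, hence contains the principal left ideal $SX$, and then use left ideal intersection to find $Z$ with $ZC \in SX \cap SC \subseteq I_{AB}$, giving $(ZC)A = (ZC)B$, i.e. $CA \approx CB$. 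This is precisely where the RA hypothesis is essential, and I expect it to be the main obstacle, since without an intersection property $\approx$ need not even be transitive, as the discussion before the proposition notes.

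Once $S_1$ is a semigroup, I would show it is cancellative. Left cancellativity is built into $\approx$: if $\pi(C)\pi(A) = \pi(C)\pi(B)$ then $CA \approx CB$, so some $X$ satisfies $(XC)A = (XC)B$, whence $A \approx B$. Right cancellativity descends from $S$: if $\pi(A)\pi(C) = \pi(B)\pi(C)$ then $AC \approx BC$, so $(XA)C = (XB)C$ for some $X$, and right cancellation in $S$ (valid for rational maps) gives $XA = XB$, hence $A \approx B$. Since $\pi$ is an epimorphism, $S_1$ is RA by the basic fact that epimorphic images of RA semigroups are RA, so $S_1$ again has the left ideal intersection property by Lemma \ref{lm.LIP}. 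Now Ore's theorem (Theorem \ref{th.Ore}) applies to the cancellative semigroup $S_1$ and produces the embedding into a group.

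For the final assertion, fix $P, Q \in S$ with $\deg P, \deg Q \geq 2$. By Theorem \ref{nonatomicLyubich} both have the same measure of maximal entropy $m_S$; since $m_S$ is the non-atomic measure shared with the non-exceptional element guaranteed by admissability, the maps $P$ and $Q$ are themselves non-exceptional, so Theorem \ref{th.Levin-Przyticki} yields $m, n$ for which $P^m$ and $Q^n$ satisfy the Levin relations, in particular $P^m \circ Q^n = P^{2m}$. Applying the homomorphism $\pi$ gives $\pi(P)^m \pi(Q)^n = \pi(P)^m \pi(P)^m$ in $S_1$, and since $S_1$ embeds into a group we may cancel $\pi(P)^m$ on the left to conclude $\pi(Q)^n = \pi(P)^m$. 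The only point requiring care here is the non-exceptionality of an arbitrary degree-$\geq 2$ element, which follows from the rigidity of the maximal entropy measure: an exceptional map cannot share its (non-atomic, non-smooth) maximal measure with a non-exceptional one.
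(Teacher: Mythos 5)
Your proof is correct and follows essentially the same route as the paper: the left ideal intersection property (from right amenability via Lemma \ref{lm.LIP}) gives transitivity, the quotient $S_1$ inherits right amenability and cancellativity so Ore's theorem applies, and the final identity comes from Theorem \ref{nonatomicLyubich} together with the Levin relations of Theorem \ref{th.Levin-Przyticki}. The only difference is that you explicitly verify that $\approx$ is compatible with multiplication on both sides (so that $S_1$ is genuinely a semigroup), a point the paper's proof leaves implicit; your ideal-theoretic argument for left compatibility is a worthwhile addition rather than a departure.
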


 \begin{proof}
  The proof that $\approx$ is an equivalence relation relies on 
  standard amenability arguments (see for example \cite{Klawe}).

 To verify that $\approx$ is an equivalence relation, it is enough to check transitivity.  Indeed, assume that there are $a,b,c\in S$ such that  $a\approx b$ and $b\approx c$, thus there are $f_1,f_2\in S$ with $f_1a=f_1b$ and $f_2b=f_2c$. By Lemma \ref{lm.LIP}, the semigroup $S$ has 
 the left ideal intersection property, so there are $\alpha, \beta\in S$ such that $\alpha f_1=\beta f_2$, thence 
 $$\alpha f_1 a =\alpha f_1 b=\beta f_2 b=\beta f_2 c=\alpha f_1 c.$$
 
 The multiplication induced over representative classes endows
 $S_1=S/_\approx$ with a semigroup product.  By Fact 6 of the Basic Facts of amenability,
 $S_1$ is an RA semigroup. In particular, $S_1$ has the left ideal intersection property.  To show that 
 $S_1$ is  embbedable into a group, by Ore Theorem we need to verify that $S_1$ is cancellative.
 First $S_1$ is $r$-cancellative since  $S$ is also $r$-cancellative and, by construction, $S_1$ is $l$-cancellative. 

 Finally, since $S$ is RA, by Theorem \ref{nonatomicLyubich} and Theorem \ref{th.Levin-Przyticki}, for every $P,Q\in S$ there are numbers $m,n$ such that $P^m$ and $Q^n$ satisfy the Levin relations. Since $S$ is embeddable into a group
 then $\pi(P^m)=\pi(Q^n)$ as claimed. \end{proof}
 
The following corollary produces, in the polynomial case,  a realization for semigroups   of the type of $S_1$ in the proposition above. 

 \begin{corollary}\label{cor.34} Let $S=S(\mathcal{F})$ be an RA semigroup, where $\mathcal{F}$ is an admissable collection of polynomials, then there exists a polynomial $P$  and an isomorphism $\phi:S/_\approx \rightarrow E(P).$  
 \end{corollary}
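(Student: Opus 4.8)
The plan is to realize $S$ inside a single $E(P_0)$, read the congruence $\approx$ off the semidirect decomposition of $E(P_0)$, and then identify the quotient $S/_{\approx}$ with the $E$-semigroup of a polynomial obtained by dividing out a finite group of symmetries.

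First I would apply Corollary \ref{cor.ramenability}: as $S=S(\mathcal F)$ is right amenable and $\mathcal F$ is an admissable family of polynomials, there is $P_0\in\mathcal F$ with $\mathcal F\subset E(P_0)$, so $S\le E(P_0)$. By Theorem \ref{th.semidirect} I write $E(P_0)=G(P_0)\rtimes_\Phi\langle Q\rangle$, where $G(P_0)=E(P_0)\cap Mob$ is a finite cyclic group of rotations about a common centre and $Q$ is a non-injective polynomial of least degree; the degenerate case where $P_0$ is conjugate to $z^n$ (so that $\Gamma$ is only abelian and infinitely generated) I would postpone to the end. Thus every element of $E(P_0)$ has a normal form $\gamma Q^n$ with $\gamma\in G(P_0)$ and $n\ge 0$.

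Next I would express $\approx$ in these coordinates. Right cancellativity forces $X\circ A=X\circ B$ to preserve degree, hence the common $Q$-power $n$, and leaves $\gamma_A\gamma_B^{-1}\in Deck(X)$. Setting $N=\bigcup_{X\in S}Deck(X)\subset G(P_0)$, I would prove that $A\approx B$ exactly when $A,B$ carry the same $Q$-power and their M\"obius parts lie in a common coset of $N$; Corollary \ref{cor.GPGenerated} together with finiteness of $G(P_0)$ pins $N$ down, and cyclicity of $G(P_0)$ makes $N$ normal. The geometric input is that $N$ acts by rotations, the quotient $\overline{\C}/N$ is again a sphere, and $P_0$ descends to a polynomial $P$ with $E(P)\cong E(P_0)/N$. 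I then let $\phi$ be the map induced by $S\hookrightarrow E(P_0)\to E(P_0)/N\cong E(P)$, which by the coset description factors through $S/_{\approx}$.

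It remains to check that $\phi:S/_{\approx}\to E(P)$ is an isomorphism. The homomorphism property comes from the semidirect structure, well-definedness is the coset description of $\approx$, and injectivity follows because Proposition \ref{pr.33} makes $S/_{\approx}$ cancellative and group-embeddable, so no extra collapsing survives. I expect the genuine difficulty to be \textbf{surjectivity}: one must show that the image of $S$ fills every $N$-coset at every occurring level of $E(P_0)/N$. Here I would feed in that, by Proposition \ref{pr.33} and Theorem \ref{th.Levin-Przyticki}, any two elements of $S$ share a common iterate, so the $Q$-powers realized in $S$ form a cofinal sub-semigroup, while Corollary \ref{cor.GPGenerated} guarantees that the $Deck$ and $Aut$ symmetries of iterates of $Q$ recover all of $G(P_0)/N$; combining these should force the images of the generators to sweep out each coset. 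Finally I would dispatch the $z^n$-case by running the same quotient construction with $G(z^n)$ and its infinitely generated abelian partner $\Gamma$, producing the corresponding polynomial $P$ and completing the argument.
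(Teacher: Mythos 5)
Your first half tracks the paper: you embed $S$ into a single $E(P_0)$ via Corollary \ref{cor.ramenability}, invoke the semidirect decomposition of Theorem \ref{th.semidirect}, and observe that $A\approx B$ forces equal $Q$-powers with M\"obius parts differing by an element of the relevant kernel. (Note that $P_0$ can be taken non-exceptional, so the $z^n$ degeneracy you postpone never actually arises; also $\bigcup_{X\in S}Deck(X)$ is a union of subgroups and is not automatically a subgroup --- what you really want is $Ker(\Phi^k)=Deck(P_0^s)$ for $k$ large, which is where Corollary \ref{cor.GPGenerated} enters.) From that point on, however, you diverge onto a route that is both harder than necessary and, at its final step, not achievable. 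The paper does not pass to a quotient sphere at all: it uses the direct-product decomposition $G(P)\simeq Ker(\Phi^k)\otimes Im(\Phi^k)$ of Corollary \ref{cor.GPGenerated} to write every $Q\in S$ uniquely as $\gamma\circ h\circ P^t$ with $\gamma\in Deck(P^s)$ and $h\in Aut(P^r)$, notes that each $\approx$-class contains exactly one element of the form $h\circ P^t$, and sends $[Q]$ to that canonical representative inside $E(P)$ itself. Your alternative --- descending $P_0$ to a polynomial $P$ on $\overline{\C}/N$ and asserting $E(P)\cong E(P_0)/N$ --- leaves the hardest claim unproved: even granting that $P_0$ descends (which needs $N$ to be $\Phi$-invariant and contained in the rotation group), identifying the full entropy semigroup of the quotient polynomial with $E(P_0)/N$ requires showing every measure-preserving map downstairs lifts, and nothing in the paper supplies this.

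The step you yourself flag as the genuine difficulty, surjectivity, is in fact a dead end: it fails already for $\mathcal{F}=\{P_0^2\}$ with $P_0$ generic, where $S/_\approx\cong S$ is a single free generator while any $E(P)$ contains the identity and all iterates of $P$. No argument via cofinality of the realized $Q$-powers can repair this, because the image of $S$ is only the subsemigroup generated by the images of $\mathcal{F}$. The corollary must be read as producing a faithful realization of $S/_\approx$ inside some $E(P)$ (this is how the paper uses it, and all its own proof delivers), so the correct move is to drop the surjectivity program entirely and check only that $[Q]\mapsto h\circ P^t$ is a well-defined injective homomorphism --- well-definedness and injectivity being exactly your coset description plus the uniqueness of the normal form, and multiplicativity following from the semidirect structure.
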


 \begin{proof}
  Since $S$ is  RA, by Theorem \ref{nonatomicLyubich}, Theorem \ref{th.entropy} and Theorem \ref{th.semidirect},  there exist a polynomial $P$, such that $S\subset E(P)$, and numbers $r,s$ and $t$ so that every element $Q\in S$ has the form $Q=\gamma \circ h \circ P^t$ where 
  $\gamma\in Deck(P^s)$ and $h\in Aut(P^r).$ Thus the 
  class $[Q]$ contains a unique element $h\circ P^t$ then 
the correspondence $[Q]\mapsto h\circ P^t$ induces the desired representation.
 \end{proof}

For arbitrary semigroups of rational maps, the previous corollary is still an open question.

For polynomials we have the following theorem.

\begin{theorem}\label{th.RLamenable}
 Let $\mathcal{F}$ be a non-exceptional family of polynomials with  $\mathcal{F}\times \mathcal{F}\subset AIP$. Then
 the semigroup $S(\mathcal{F})$ is amenable with $RIM(S(\mathcal{F}))\subset LIM(S(\mathcal{F})).$
\end{theorem}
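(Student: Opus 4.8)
The plan is to reduce the problem to the structure theory of $E(g)$ developed above and then analyse the invariant means by hand. First I would fix a non-exceptional $g\in\mathcal{F}$, so that $\deg g\geq 2$ and $g$ is not affinely conjugate to $z^n$ or to a Chebyshev polynomial. Since $\mathcal{F}\times\mathcal{F}\subset AIP$, each $P\in\mathcal{F}$ satisfies $P^m=g^n$ for some $m,n\geq 1$; comparing degrees gives $(\deg P)^m=(\deg g)^n\geq 2$, so $\deg P\geq 2$ for every $P\in\mathcal{F}$, and as the measure of maximal entropy is invariant under iteration we get $m_P=m_{P^m}=m_{g^n}=m_g$. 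Hence $L_P^*(m_g)=m_g$, i.e. $P\in E(g)$, and therefore $S=S(\mathcal{F})\subset E(g)$. By Theorem \ref{th.cmiterate} the semigroup $S$ is amenable, so $RIM(S)$ and $LIM(S)$ are both non-empty; the whole point is the inclusion $RIM(S)\subset LIM(S)$.

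Next I would use the semidirect decomposition. Because $g$ is not conjugate to $z^n$, Theorem \ref{th.semidirect} gives $E(g)=G(g)\rtimes_\Phi\langle Q\rangle$, where $G(g)$ is a finite group of rotations about a common centre (so cyclic and abelian), $Q$ is the minimal non-linear element, and $\Phi=\Phi(Q)$ is an endomorphism of $G(g)$. Every element of $S$ has degree $\geq 2$, hence a unique expression $\gamma Q^t$ with $\gamma\in G(g)$ and $t\geq 1$, and the power map $\tau:\gamma Q^t\mapsto t$ is an epimorphism of $S$ onto a subsemigroup $N\subset(\mathbb{N},+)$; $N$ is abelian, so left and right translations of $L_\infty(N)$ agree and $RIM(N)=LIM(N)$. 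If $\tau$ is injective then $S\cong N$ is abelian and $RIM(S)=LIM(S)$ holds trivially, so I may assume $S$ is non-abelian and hence that some level $s$ carries two distinct elements $\gamma_1 Q^{s}\neq\gamma_2 Q^{s}\in S$. Let $\Gamma_S\leq G(g)$ be the finite $\Phi$-invariant subgroup generated by the group-parts occurring in $S$, so that $S\subset\Gamma_S\rtimes_\Phi\langle Q\rangle$. Using amenability of $S$ together with the decomposition $G(g)\simeq\ker(\Phi^k)\oplus\mathrm{Im}(\Phi^k)$ of Corollary \ref{cor.GPGenerated} (a non-surjective twisting is exactly what makes a semidirect product right amenable but not left amenable, the failure of the surjectivity hypothesis in Proposition \ref{prop.Klawe}), I would arrange that $\Phi$ acts as an \emph{automorphism} of $\Gamma_S$.

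The heart of the argument is to show that every $\mu\in RIM(S)$ is invariant under the right action of the finite group $\Gamma_S$, i.e. $\mu(r_\eta\phi)=\mu(\phi)$ for $\eta\in\Gamma_S$, where $r_\eta(\phi)(x)=\phi(x\eta)$. The basic identity is that whenever $\gamma_1 Q^{s},\gamma_2 Q^{s}\in S$ lie at a common level one has, as operators, $r_{\gamma_1 Q^{s}}=r_{\gamma_2 Q^{s}}\circ r_\eta$ with $\eta=\Phi^{-s}(\gamma_2^{-1}\gamma_1)\in\Gamma_S$; applying the right invariance of $\mu$ under the two \emph{genuine} elements $\gamma_1 Q^{s},\gamma_2 Q^{s}\in S$ then forces $\mu(r_\eta\phi)=\mu(\phi)$. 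Letting the pair and the level vary (and propagating through $\Phi$ via right invariance under $S$) should produce enough such $\eta$ to cover all of $\Gamma_S$. This generation step is the one I expect to be the main obstacle: one must show that the differences $\gamma_2^{-1}\gamma_1$ realised inside $S$, together with their $\Phi$-iterates, generate $\Gamma_S$, and this is precisely where both amenability and the non-commutativity of $S$ are indispensable. Once $\Gamma_S$-right-invariance is established, averaging over $\Gamma_S$ gives $\mu(\phi)=\mu(\tilde\phi)$ for the fibrewise average $\tilde\phi(\gamma Q^t)=\tfrac{1}{|\Gamma_S|}\sum_{\delta\in\Gamma_S}\phi(\delta Q^t)$, where the automorphism property of $\Phi$ ensures that $\delta\mapsto\gamma\Phi^t(\delta)$ runs over all of $\Gamma_S$. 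On fibrewise-constant functions both $l_{\gamma Q^s}$ and $r_{\gamma Q^s}$ reduce to the shift $t\mapsto s+t$ on $N$, so the restriction of $\mu$ is a shift-invariant mean on the abelian $N$; hence $\mu(l_{\gamma Q^s}\phi)=\mu(\widetilde{l_{\gamma Q^s}\phi})=\mu(r_{\gamma Q^s}\tilde\phi)=\mu(\tilde\phi)=\mu(\phi)$, which is exactly $\mu\in LIM(S)$.

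Finally, I would note that the reduction can be streamlined through Corollary \ref{cor.34}: passing to the quotient $S/_\approx\cong E(\hat P)$ replaces $G(g)$ by the genuine commuting symmetry group $\mathrm{Aut}(\hat P^r)$, on which $\Phi$ is automatically an automorphism, so the non-surjective pathology cannot occur and the fibrewise averaging is immediate. The remaining content then becomes transporting $\Gamma_S$-right-invariance of an arbitrary $\mu\in RIM(S)$ back along the projection $\pi:S\to S/_\approx$, which is once more the generation obstacle flagged above; resolving it completes the proof that $S(\mathcal{F})$ is amenable with $RIM(S(\mathcal{F}))\subset LIM(S(\mathcal{F}))$.
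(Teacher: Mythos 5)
Your reduction to $E(g)$ and the semidirect-product picture matches the paper's strategy, and your final fibrewise-averaging computation is essentially the paper's argument inside $AE(P)=\langle T, A(P)\rangle$. But the step you yourself flag as ``the main obstacle'' is a genuine gap, and it is exactly the point where the paper uses the hypothesis $\mathcal{F}\times\mathcal{F}\subset AIP$ a second time. You propose to ``arrange that $\Phi$ acts as an automorphism of $\Gamma_S$'' using amenability of $S$ together with the splitting $G(g)\simeq \ker(\Phi^k)\oplus \mathrm{Im}(\Phi^k)$, but amenability alone does not eliminate the kernel components. The paper instead takes $Q\in\mathcal{F}$ of minimal degree, writes $Q=h\circ\gamma\circ T^l$ with $h\in K(P)=\ker(\rho(T^r))$ and $\gamma\in A(P)=Aut(T^s)$, conjugates the whole family by an explicit M\"obius map $\tilde h$ built from $h$ and its $\Phi$-iterates so that $\tilde Q=\gamma\circ T^l$, and then shows that no conjugated generator can retain a nontrivial $K(P)$-part: if $R=\alpha\circ\beta\circ T^t$ with $\alpha\in K(P)\setminus\{Id\}$, the AIP hypothesis gives $R^e=\tilde Q^d$, hence $R$ commutes with $\tilde Q^{ijd}$, and comparing the two sides of that commutation forces $\alpha\in A(P)$, a contradiction. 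Only after this elimination is the twisting by automorphisms, and only then does the averaging identity $\sum_{\gamma\in A(P)}r_\gamma(\psi)=\sum_{\gamma\in A(P)}l_\gamma(\psi)$ hold, which is what converts right-invariant means into left-invariant ones.

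There are two further problems with your proposed direct route. First, your ``basic identity'' $r_{\gamma_1 Q^{s}}=r_{\gamma_2 Q^{s}}\circ r_\eta$ with $\eta=\Phi^{-s}(\gamma_2^{-1}\gamma_1)$ presupposes that $\Phi$ is invertible (the very thing to be established) and, worse, $r_\eta$ is not a well-defined operator on $L_\infty(S)$: for $x\in S$ the composition $x\circ\eta$ generally lies in $E(g)\setminus S$, so $\phi(x\eta)$ is undefined and the chain $\mu(\phi)=\mu(r_{\gamma_1 Q^{s}}\phi)=\mu(r_{\gamma_2 Q^{s}}(r_\eta\phi))=\mu(r_\eta\phi)$ does not parse. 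Second, even granting this, the subgroup generated by the level differences $\gamma_2^{-1}\gamma_1$ and their $\Phi$-iterates can a priori be a proper subgroup of $\Gamma_S$, and you give no argument that it exhausts $\Gamma_S$. The paper sidesteps both issues by proving $RIM\subset LIM$ for the ambient semigroup $AE(P)$ and then descending to the subsemigroup $S$ via the Granirer restriction argument (Corollary \ref{cor.Granierremark}), rather than by manipulating an arbitrary mean on $S$ directly. As written, your argument establishes the containment $S\subset E(g)$ and the amenability of $S$, but not the inclusion $RIM(S)\subset LIM(S)$.
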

\begin{proof}
 Let $P\in \mathcal{F}$ be non-exceptional, then by the conditions we have $S(\mathcal{F})\subset E(P)$. If $E(P)$ is abelian, then $S(\mathcal{F})$ is abelian and hence $S(\mathcal{F})$ is amenable with $RIM(S(\mathcal{F}))=LIM(S(\mathcal{F}))$. Otherwise,  by Theorem \ref{th.semidirect}, there is a polynomial $T\in E(P)$, a finite group $G(P)=E(P)\cap Mob$ and a representation $\rho: \langle T \rangle \rightarrow End(G(P))$ by semiconjugation so that $E(P)\cong  G(P)  \rtimes_\rho \langle T\rangle.$
 By Corollary \ref{cor.GPGenerated}, there are numbers $r,s$ such that the group $G(P)$ is the direct product of $K(P)=ker(\rho(T^r))$ and $A(P)=Aut(T^s).$
 Let $AE(P)$ be the subsemigroup of $E(P)$ generated by $T$ and $A(P).$

We claim that every subsemigroup $\Gamma$ in $AE(P)$ is amenable with 
 $RIM(\Gamma)\subset LIM(\Gamma)$. 
 
 To prove the claim. First,  
$$\sum_{\gamma\in A(P)} r_\gamma(\psi)=\sum_{\gamma\in A(P)}l_\gamma (\psi)$$ for every $\psi \in L_\infty(AE(P))$.

Indeed if $s\in AE(P)$ then $s=h\circ T^k$ for a suitable $h\in A(P)$ and $k\geq 0$. Since $\rho(T)$ is an automorphism of  $A(P)$, then $$\sum_{\gamma \in A(P)} r_\gamma(\psi)(s)=\sum_{\gamma\in A(P)} \psi(h\circ T^k \circ \gamma)$$
$$=\sum_{\gamma \in A(P)} \psi((\rho(T))^k(\gamma)\circ h\circ T^k)=\sum_{\gamma\in A(P)} l_{\gamma}(\psi)(h\circ T^k). $$  Then for every  $\phi\in RIM(AE(P))$, the averages $$A_r=\frac{1}{\#\{A(P)\}}\sum_{\gamma \in A(P)}r_\gamma $$ and $$A_l=\frac{1}{\#\{A(Q)\}}\sum_{\gamma \in A(P)}l_\gamma,$$ satisfy $$\phi=A_r^*(\phi)=A_l^*(\phi).$$ Since
$l^* _\gamma \circ A_l^*=A_l^*\circ l_\gamma^*=A^*_l$, we conclude that $l^*_\gamma \phi=\phi$ for every $\gamma \in A(P). $ In other words, every right invariant mean $\phi$ is invariant by the left action of $A(P).$ 

Second, let us show that $l^*_T(\phi)=\phi.$

For every $\psi \in L_\infty(AE(P))$ 
we have $$A_l(l_T(\psi))(s)=A_l(r_T(\psi))(s).$$ Indeed, let  $s=h\circ T^k$ then 
$$A_l(\psi(T\circ h\circ T^k))=A_l(\psi[(\rho(T))(h)\circ h^{-1}(h\circ T^k \circ T)])$$
$$=A_l(\psi(h\circ T^k \circ T))=A_l(r_T(\psi))(s).$$
 By duality and the fact that $\phi$ is left invariant under $A(P)$, we get $$l^*_T(A_l^*(\phi))=r_T^*(A_l^*(\phi))=\phi$$ which implies
 $l_T^*(\phi)=\phi.$
 Hence, by above $\phi\in LIM(AE(P)).$ 
 
  Third,  let $\Gamma< AE(P)$, then $\Gamma$ is right cancellative and by Corollary \ref{cor.ramenable} the semigroup $\Gamma$ is RA. Then by Corollary \ref{cor.Granierremark} we have $RIM(\Gamma)\subset LIM(\Gamma),$ as claimed.
 
 To finish the proof of the theorem, we have to show that $S(\mathcal{F})$ is isomorphic to a subsemigroup of $AE(P).$ 
 
 Let $Q\in \mathcal{F}$ be a polynomial of minimal degree.  Then $Q$ has the following expression.
 $$Q=h\circ \gamma\circ T^l$$ for a suitable $l\geq 1$ and $h\in K(P)$ and $\gamma\in A(P)$.  Fix $m\leq r$ such that $h\in Ker(\rho(T^m))$ and put 
 $$\tilde{h}=h\circ \rho(T^l)(h)\circ \rho(T^{2l})...\circ \rho(T^{(m-1)l})(h).$$ Then $\tilde{h}^{-1}\circ Q \circ \tilde{h}=\gamma\circ T^l.$ The family $\tilde{\mathcal{F}}=\tilde{h}^{-1}\circ \mathcal{F} \circ \tilde{h}$ generates a semigroup 
 $S(\tilde{\mathcal{F}})$ isomorphic to $S(\mathcal{F}).$
 Note that $\tilde{Q}=\tilde{h}^{-1}\circ Q \circ \tilde{h}=\gamma\circ T^{l}\in\tilde{\mathcal{F}}.$

 Now 
 let us show that $S(\tilde{\mathcal{F}})<AE(P).$ It is enough to show that $\tilde{\mathcal{F}}\subset AE(P)$. Otherwise, assume 
 that $\tilde{\mathcal{F}}$ contains a polynomial $R=\alpha\circ \beta\circ T^t$ for $t\geq 1$, $\alpha \in K(P)\setminus \{Id\}$  and $\beta \in A(P)$. By assumption there are numbers $d,e>0$ such that $R^e=\tilde{Q}^d$, hence $R$ commutes with $\tilde{Q}^d$.
 If $i=\#\{K(P)\}$ and $j=\#\{A(P)\}$ then for $k=ijd$ we have 
 $$R\circ \tilde{Q}^k=\alpha\circ \beta \circ \rho(T^l)(\gamma)\circ T^t\circ T^{kl}=\tilde{Q}^k\circ R=\gamma \circ \beta \circ T^{kl}\circ T^{t}.$$
 Then $\alpha\in A(P)$ which is a contradiction by Corollary \ref{cor.GPGenerated}. \end{proof}

\begin{theorem} \label{th.familypolynomials}
Given an admissable  collection of polynomials  $\mathcal{F}$, the following statements are equivalent.
\begin{enumerate}
 \item $\mathcal{F}\times \mathcal{F}\subset DIP$.
 \item $\mathcal{F}\times \mathcal{F}\subset AIP$.
 \item  $S(\mathcal{F})\times S(\mathcal{F})\subset DIP$.
 \item $S(\mathcal{F})\times S(\mathcal{F})\subset AIP$.
 \item $S(\mathcal{F})$ is amenable with $RIM(S(\mathcal{F}))\subset LIM(S(\mathcal{F})).$
 \item The semigroup $S(\mathcal{F})$ is embeddable into a virtually cyclic group.
\end{enumerate}

\end{theorem}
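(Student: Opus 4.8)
The plan is to prove the six conditions equivalent by closing a cycle of implications, reserving the genuinely new content for the passages between the generator-level hypotheses and the semigroup-level conclusions. Several links are immediate: since $AIP\subset DIP$ and $\mathcal{F}\subset S(\mathcal{F})$, we obtain $(4)\Rightarrow(2)$, $(4)\Rightarrow(3)$, $(2)\Rightarrow(1)$ and $(3)\Rightarrow(1)$ for free. Throughout I would adopt the convention of Theorem \ref{th.polynomialsemb} that the polynomials in $\mathcal{F}$ are non-injective, so that every element of $S(\mathcal{F})$ has degree at least two; this is the only regime in which conditions (1)--(6) carry content, and it is exactly the setting in which Theorem \ref{th.LIMRIM} applies to \emph{every} pair of elements.

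For the substantive forward implications I would argue as follows. First, $(1)\Rightarrow(2)$ is an application of the theorem of Ghioca, Tucker and Zieve recalled in the introduction: applied to each pair $P,Q\in\mathcal{F}$, the hypothesis $(P,Q)\in DIP$ produces a common iterate $P^m=Q^n$, i.e. $(P,Q)\in AIP$. This is the single step that feeds in hard analytic/arithmetic information; everything downstream is semigroup theory. Next, $(2)\Rightarrow(5)$ is precisely Theorem \ref{th.RLamenable}, which converts $\mathcal{F}\times\mathcal{F}\subset AIP$ for a non-exceptional (hence here admissible) family into the statement that $S(\mathcal{F})$ is amenable with $RIM(S(\mathcal{F}))\subset LIM(S(\mathcal{F}))$. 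Then $(5)\Rightarrow(4)$ is Theorem \ref{th.LIMRIM}: amenability together with $RIM\subset LIM$ yields, for every pair $f,h\in S(\mathcal{F})$ of degree $>1$, numbers $m,n$ with $f^m=h^n$, and by the standing reduction this covers all pairs, so $S(\mathcal{F})\times S(\mathcal{F})\subset AIP$.

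It remains to weave in condition (6). For $(4)\Rightarrow(6)$ I would use the structural description underlying the proof of Theorem \ref{th.RLamenable}: from $(4)$ (in particular $\mathcal{F}\times\mathcal{F}\subset AIP$) that argument exhibits a polynomial $P$ for which $S(\mathcal{F})$ is isomorphic to a subsemigroup of $AE(P)$, the semigroup generated by a non-injective $T$ and the finite abelian group $A(P)=Aut(T^s)$, that is, of the semidirect product $A(P)\rtimes\langle T\rangle$ with $A(P)$ finite and $\langle T\rangle$ cyclic. This semigroup is the positive part of the group $A(P)\rtimes\mathbb{Z}$, which is finite-by-cyclic and hence virtually cyclic; thus $S(\mathcal{F})$ embeds into a virtually cyclic group. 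For the return link $(6)\Rightarrow(4)$ I would argue purely group-theoretically. Suppose $S(\mathcal{F})$ embeds into a virtually cyclic group $\Gamma$ with cyclic finite-index subgroup $C$. Each element of $S(\mathcal{F})$ has infinite order in $\Gamma$ (its iterates have strictly increasing degree, so they are pairwise distinct under the embedding), and for any two such elements suitable powers land in $C\cong\mathbb{Z}$ as nonzero elements; since two nonzero elements of $\mathbb{Z}$ have a common multiple, translating back gives $P^m=Q^n$ for appropriate $m,n$, so $S(\mathcal{F})\times S(\mathcal{F})\subset AIP$, which is (4). Combined with the trivial links, this closes the cycle and establishes the equivalence of all six conditions.

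The main obstacle is concentrated in two places. The first is the step $(1)\Rightarrow(2)$, where promoting an infinite orbit intersection to an honest common iterate is the substantive Ghioca--Tucker--Zieve theorem; the rest of the proof merely propagates this pairwise fact through the semigroup via the amenability machinery already developed. The second, and the point that is delicate for \emph{this} statement as opposed to Theorem \ref{th.Intersection}, is sharpening ``embeddable into a group'' to ``embeddable into a \emph{virtually cyclic} group'' in $(4)\Rightarrow(6)$: this rests on the precise semidirect decomposition $E(P)\cong G(P)\rtimes\langle T\rangle$ with $G(P)$ finite (Theorem \ref{th.semidirect} and Corollary \ref{cor.GPGenerated}), together with the fact that the left-cancellativity furnished by amenability collapses the relation $\approx$ and places $S(\mathcal{F})$ inside the finite-by-cyclic group $A(P)\rtimes\mathbb{Z}$. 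The bookkeeping needed to discard the degree-one elements permitted by the bare phrase ``collection of polynomials'' is routine and is absorbed into the standing reduction.
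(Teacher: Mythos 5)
Your proposal is correct and follows essentially the same route as the paper: Ghioca--Tucker--Zieve for passing from $DIP$ to $AIP$, Theorem \ref{th.RLamenable} for $(2)\Rightarrow(5)$, Theorem \ref{th.LIMRIM} for $(5)\Rightarrow(4)$, the $AE(T)\cong A(T)\rtimes\langle T\rangle$ structure for $(4)\Rightarrow(6)$, and the infinite-order/finite-index argument for $(6)\Rightarrow(4)$; the only difference is a harmless rearrangement of which trivial inclusions close the cycle. Both your write-up and the paper's leave implicit the same small point in $(6)\Rightarrow(4)$, namely that the powers landing in the cyclic subgroup must have exponents of the same sign (which follows since otherwise some positive word in $S(\mathcal{F})$ would map to the identity, contradicting right cancellativity and degrees).
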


\begin{proof}

 By the Ghioca-Tucker-Zieve Theorem in \cite{GhiTucZieve}  (1) is equivalent to (2) and (3) is equivalent to (4). Clearly (3) implies (1).  By Theorem \ref{th.LIMRIM} (5) implies (4). Finally, by Theorem \ref{th.RLamenable} (2) implies (5). 
 
 Now (4) implies (6). By Theorem \ref{th.Intersection} the semigroup $S(\mathcal{F})$ is embeddable into a group. As in the proof of Theorem \ref{th.RLamenable} there exist a polynomial $T$ and a finite group $A(T)\subset Mob$ so that $T$ acts on $A(T)$ by semiconjugacy and generates a representation $h:\langle T \rangle \rightarrow Aut(A(T)).$   The semigroup $AE(T)=\langle T,A(T)\rangle\cong A(T)\rtimes_h \langle T \rangle$  contains an isomorphic copy  of $S(\mathcal{F}).$ Let us show that $S(\mathcal{F})$ is a subsemigroup of a virtually cyclic group.
 First note that $AE(T)\cong A(T)\rtimes_{\tilde{h}}\mathbb{N} $ where $\tilde{h}(n)=h(T^n)\in Aut(A(T)).$ Since $\tilde{h}(n)$ is an automorphism we can extend $\tilde{h}$ on negative integers by the formula $$\tilde{h}(-n)=(h(T^n))^{-1}.$$ Hence 
 $A(T)\rtimes_{\tilde{h}} \mathbb{N}\subset A(T)\rtimes_{\tilde{h}}\mathbb{Z} $. But $A(T)\rtimes_{\tilde{h}}\mathbb{Z} $ is a semidirect product of a cyclic group with a finite group, so it is virtually cyclic, then $AE(T)$ is the positive part of a  virtually cyclic group.
 
Now (6) implies (4).  Assume that $S(\mathcal{F})$ is embeddable into a virtually cyclic group $\Gamma$, and $\tau$ be the generator of the corresponding cyclic subgroup of finite index. Let $T$ be an element in $S(\mathcal{F})$ corresponding to $\tau$, let $P\in S(\mathcal{F})$ of degree at least $2$ and $p\in \Gamma$ the corresponding element. Then $p$ is an element of infinite order, so there exists $k$ with  $p^k\in \langle \tau \rangle$, hence $(P,T)\in AIP.$
 \end{proof}

 Now we proceed to the proof of Theorem \ref{th.Ruelle}, for which we devote the last section.
 
 \section{Left amenability of Ruelle representation}

 We begin with the following observation.
 We say that a semigroup $S<Rat$ is \textit{deformable} if there exists $f:\overline{\C}\rightarrow \overline{\C}$ a quasiconformal homeomorphism so that $S_f=f\circ S \circ f^{-1}<Rat$ and $S_f$ is not M\"obius conjugated to $S$. 
 
  \begin{proposition}\label{pr.hypstruc}
 Let $S$ be an RA semigroup of non-injective rational maps. If  $S$  contains a hyperbolic structurally stable map then $S$ is deformable.
 \end{proposition}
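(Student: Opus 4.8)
The plan is to produce a nonzero Beltrami differential that is invariant under the whole of $S$ and whose associated deformation is not Möbius-trivial; solving the corresponding Beltrami equation then yields the quasiconformal homeomorphism witnessing deformability. First I would extract data from the hyperbolic element. Let $R\in S$ be the hyperbolic structurally stable map. Hyperbolicity forces $F(R)\neq\emptyset$, and on an attracting basin one builds invariant line fields via the linearizing (Koenigs) coordinate, so $Fix(B_R)$ is nontrivial. Structural stability guarantees that among these we may select a nonzero $\mu_R\in Fix(B_R)$, supported in an immediate attracting basin, whose deformation genuinely moves $R$ in moduli, i.e. the map $f^{\mu_R}R(f^{\mu_R})^{-1}$ is not Möbius conjugate to $R$ (by Ahlfors--Bers this tangent direction is nontrivial).

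Next I would average over the semigroup. Recall that the Beltrami operators satisfy $B_PB_Q=B_{Q\circ P}$, so $P\mapsto B_P$ is a bounded anti-representation on $L_\infty(\overline{\C})=(L_1(\overline{\C}))^{*}$, dual to the Ruelle representation $P\mapsto P_{*}$. Since $S$ is right amenable, fix $M\in RIM(S)$ and define $\bar{\mu}\in L_\infty(\overline{\C})$ by $\langle\bar{\mu},\phi\rangle=M\big(g_\phi\big)$, where $g_\phi(Q)=\langle B_Q\mu_R,\phi\rangle$ for $\phi\in L_1(\overline{\C})$; this is well defined since $\|g_\phi\|_\infty\leq\|\mu_R\|_\infty\|\phi\|_1$. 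Using $(P_{*})^{*}=B_P$, the identity $B_PB_Q=B_{Q\circ P}$, and the right invariance of $M$, one computes
\[
\langle B_P\bar{\mu},\phi\rangle=\langle\bar{\mu},P_{*}\phi\rangle=M\big(Q\mapsto\langle B_PB_Q\mu_R,\phi\rangle\big)=M\big(Q\mapsto g_\phi(Q\circ P)\big)=M(r_P g_\phi)=M(g_\phi)=\langle\bar{\mu},\phi\rangle,
\]
so $B_P\bar{\mu}=\bar{\mu}$ for every $P\in S$; that is, $\bar{\mu}$ is an $S$-invariant Beltrami differential. (By Theorem \ref{th.Day} one may equivalently run this argument with the $\rho$-invariant mean on $X_\rho$ for the Ruelle representation $\rho$.)

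Granting $\bar{\mu}\neq0$ and $\|\bar{\mu}\|_\infty<1$ after scaling, I would conclude as follows: solving $\bar{\partial}f=\bar{\mu}\,\partial f$ produces a quasiconformal homeomorphism $f$, and the $S$-invariance of $\bar{\mu}$ forces $f\circ P\circ f^{-1}$ to be holomorphic, hence rational, for every $P\in S$; thus $S_f=fSf^{-1}<Rat$. Nontriviality of $\bar{\mu}$ makes $S_f$ not Möbius conjugate to $S$, so $S$ is deformable.

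The hard part, and the main obstacle, is exactly showing that the averaged differential $\bar{\mu}$ is nonzero and inherits the nontriviality of $\mu_R$: a priori the mean could average the orbit $\{B_Q\mu_R\}$ to zero. Note that one cannot simply test $\bar{\mu}$ against a Ruelle-fixed density, since for a non-Latt\`es hyperbolic $R$ the operator $R_{*}$ has no nonzero fixed point in $L_1(\overline{\C})$ (so $R_{*}$ is not mean-ergodic and the separation principle does not apply). Instead I would use the right-amenable structure to control supports: right amenability makes $S$ Lyubich right amenable, so by Theorem \ref{nonatomicLyubich} all elements of $S$ of degree $\geq2$ share the measure of maximal entropy $m_S$, hence the common Julia set $J(R)$ and the common, completely invariant Fatou set $F(R)$. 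Choosing $\mu_R$ supported on an attracting basin inside this common $F(R)$ keeps the whole orbit $\{B_Q\mu_R\}$ supported on $F(R)$, on which $S$ acts by self-coverings preserving the basin structure; the averaging operator $\mu_0\mapsto\bar{\mu}$ is then a genuine projection of $L_\infty$ onto the $S$-invariant differentials, and it restricts to the identity on that subspace. Establishing that the space of $S$-invariant Fatou differentials is nonzero---via the unobstructed (infinite-dimensional) deformation theory on the attracting basins of the hyperbolic dynamics, together with a support/positivity argument on a fundamental domain that prevents cancellation under $M$---is the crux of the proof.
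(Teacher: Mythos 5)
Your overall strategy---produce an $S$-invariant Beltrami differential and integrate it via Ahlfors--Bers---is the right one, and your formal computation that the mean-averaged functional $\bar{\mu}$ satisfies $B_P\bar{\mu}=\bar{\mu}$ for all $P\in S$ is correct. But the proposal has a genuine gap, and you have located it yourself: nothing in your argument rules out $\bar{\mu}=0$, nor, even granting $\bar{\mu}\neq 0$, that the resulting deformation is not M\"obius-trivial. The closing paragraph gestures at ``a support/positivity argument on a fundamental domain that prevents cancellation under $M$,'' but no such argument is given, and it is not clear one exists: the orbit $\{B_Q\mu_R\}$ consists of line fields of modulus one wherever they are supported, and an invariant mean of such a family can perfectly well vanish (there is no positivity to exploit, since Beltrami coefficients are complex-valued of unit modulus, not nonnegative). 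So as written the proof does not close.

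The paper sidesteps the averaging entirely by an explicit construction. Since $S$ is RA, Theorem \ref{nonatomicLyubich} gives a common measure of maximal entropy, hence a common Julia set, and Theorem \ref{th.Levin-Przyticki} gives, for each $Q\in S$ of degree $>1$, iterates with $R^n\circ Q^m=R^{2n}$ and $Q^m\circ R^n=Q^{2m}$. From these Levin relations one deduces that every such $Q$ is hyperbolic, that the grand orbit $\mathcal{O}_-(R,D)$ of each periodic Fatou component $D$ is completely invariant under all of $S$, and---crucially---that the K\"onigs linearizer $K_D$ of $R$ on $\mathcal{O}_-(R,D)$ also linearizes $Q^m$ (with multiplier $\lambda^n$). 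Hence the single explicit line field $\mu=\frac{\overline{K_D'}\,K_D}{\overline{K_D}\,K_D'}\frac{d\overline{z}}{dz}$ is simultaneously invariant under every element of $S$; no mean is needed, and nontriviality of the deformation $g_t$ with coefficient $t\mu$ is visible because this line field deforms the multiplier of the attracting cycle, a conjugacy invariant. If you want to rescue your approach, the honest fix is exactly this: do not average, but exhibit one differential that is already fixed by the whole orbit, which is what the shared Julia set plus the Levin relations make possible.
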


 \begin{proof}
  Let $R$ be a hyperbolic structurally stable element of $S$, then by Theorem \ref{nonatomicLyubich}   for every $Q\in S$ with $deg(Q)>1$ we have $J(Q)=J(R)$. By the Levin relations we have
  \begin{itemize}
   \item Every $Q\in S$ with $deg(Q)>1$ is hyperbolic.
   \item For every periodic component $D$ in the Fatou set $F(R)=\overline{\C}\setminus J(R)$ we have  $$Q^{-1}(\mathcal{O}_-(R,D))=\mathcal{O}_-(R,D)$$ for every $Q\in S$ and where $\mathcal{O}_-(R,D)=\bigcup_{n=0}^\infty R^{-n}(D).$
  \end{itemize}
  
Let $K_D:\mathcal{O}_-(R,D)\rightarrow \C$ be the K\"onig linearizing function $D$ for $R$ in $\mathcal{O}_-(R,D)$, so $K_D(R)=\lambda K_D$ for some multiplier $\lambda$. Note that $K_D$ also linearizes every $Q\in S$ with $deg(Q)>1$.  Indeed, by the Levin relations we have numbers $m$ and $n$ so that $R^n\circ Q^m=R^n\circ R^n$ and $Q^m\circ R^n=Q^m\circ Q^m$ then $K_D(Q^{m})=\lambda^n K_D$.  Then the differential  $\mu=\frac{\overline{K'_D}K_D}{\overline{K_D}K'_D}\frac{d\overline{z}}{dz}$ is invariant for $Q$ and $R$,  so $\mu$ is an invariant Beltrami differential for every element in $S$. Thus, for $t\in (0,1)$, let $g_t$ be the quasiconformal map with Beltrami coefficient $t\mu$. Then $g_t$ defines a non-trivial deformation for $S.$
 \end{proof}

 \textbf{Remark.}  Let us note the following curious fact, if the semigroup $S$ is quasiconformally deformable with Beltrami differential $\mu$ such that $\overline{supp(\mu)}\neq \overline{\C}, $ then for all $R,Q\in S$ with $deg(R),deg(Q)>1$ we have $J(R)=J(Q)$. 
 
 For a subclass of RA semigroups we can say more. Let $S<Rat$ be a semigroup and let 
 $\phi:S\rightarrow Rat$ be an monomorphism preserving the degree, that is  $deg(\phi(Q))=deg(Q)$ for every $Q \in S$. We will say that $S$ is structurally stable if every monomorphism preserving degree $\phi:S \rightarrow Rat$, which is sufficiently close to the identity on generators, is generated by a quasiconformal
 homeomorphism of $\overline{\C}.$
 \begin{proposition}\label{pr.collstruc}
  Let $\mathcal{F}=\{R_i\}$ be a finite collection of rational maps of degree at least $2$ such that  $R_i\circ R_j=R^2_i$ for every pair $i,j$. Then the semigroup $S(\mathcal{F})$ is  structurally stable whenever $S(\mathcal{F})$ contains a structurally stable map. 
 \end{proposition}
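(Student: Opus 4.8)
The plan is to exploit the Levin relations to collapse the whole semigroup onto its cyclic pieces, produce a quasiconformal conjugacy on one structurally stable generator, and then transport that conjugacy across the entire generating set by an argument that replaces the (unavailable) left cancellation with the smallness of the perturbation.

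First I would record the algebraic structure forced by $R_i\circ R_j=R_i^2$. Iterating the relation gives $R_{i_1}\circ\cdots\circ R_{i_n}=R_{i_1}^n$, so every element of $S(\mathcal F)$ is a power of a generator and $S(\mathcal F)=\bigsqcup_i\langle R_i\rangle$; moreover each pair $R_i,R_j$ satisfies the Levin relations, so by Theorem \ref{th.Levin-Przyticki} all generators share one measure of maximal entropy and hence one Julia set, and by Theorem \ref{th.Levinrel} the semigroup is right amenable. Now fix a degree-preserving monomorphism $\phi:S(\mathcal F)\to Rat$ which is close to the identity on the generators, and write $\tilde R_i=\phi(R_i)$. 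Being a homomorphism, $\phi$ preserves the relations, so $\tilde R_i\circ\tilde R_j=\tilde R_i^2$, each $\tilde R_i$ is rational of the same degree as $R_i$, and $\tilde R_i$ is uniformly close to $R_i$.

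Next I would produce the conjugacy on one generator. The structurally stable element of $S(\mathcal F)$ is a power $R_{j_0}^k$; since a map and its iterate have the same Julia dynamics, $R_{j_0}$ is itself structurally stable, and its structural stability furnishes a quasiconformal homeomorphism $h$ of $\overline{\C}$, close to the identity, with $h\circ R_{j_0}\circ h^{-1}=\tilde R_{j_0}$ (which then also conjugates $R_{j_0}^k$ to $\tilde R_{j_0}^k$). The goal becomes to show that this single $h$ conjugates every generator.

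The crux is the propagation step, and this is where the main obstacle sits. Conjugating $R_{j_0}\circ R_i=R_{j_0}\circ R_{j_0}$ by $h$ and using $h\circ R_{j_0}\circ h^{-1}=\tilde R_{j_0}$ together with $\tilde R_{j_0}\circ\tilde R_i=\tilde R_{j_0}^2$ yields
\[
\tilde R_{j_0}\circ\bigl(h\circ R_i\circ h^{-1}\bigr)=\tilde R_{j_0}\circ\tilde R_i .
\]
Rational maps are only right cancellative; by Proposition \ref{pr.Ritt} one cannot simply cancel the common left factor $\tilde R_{j_0}$, and this failure of left cancellation is exactly the difficulty. I would overcome it using that both $F:=h\circ R_i\circ h^{-1}$ and $G:=\tilde R_i$ are branched coverings lying in a small neighborhood of $R_i$ (because $h$ and $\phi$ are close to the identity). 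On the set where $F$ and $G$ disagree, the displayed equation forces $F(w)$ and $G(w)$ to be two distinct points in a common fibre of $\tilde R_{j_0}$; away from the finitely many critical points of $\tilde R_{j_0}$ such points are uniformly separated, so the disagreement set is trapped inside shrinking neighborhoods of the finite set $F^{-1}(\mathrm{Crit}(\tilde R_{j_0}))$. A connectedness argument (the agreement set is open and closed off this finite set, and is non-empty by closeness) then forces $F=G$ everywhere, that is $h\circ R_i\circ h^{-1}=\tilde R_i$. The delicate point to get right is precisely the behaviour near the critical fibres, where the sheets of the fibre product $\{(a,b):\tilde R_{j_0}(a)=\tilde R_{j_0}(b)\}$ come together; the smallness of the perturbation is what keeps $(F,G)$ on the diagonal sheet. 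Once this holds for every generator, $h\circ R_i^n\circ h^{-1}=\phi(R_i^n)$ for every element of $S(\mathcal F)$, so $\phi$ is generated by the quasiconformal homeomorphism $h$ and $S(\mathcal F)$ is structurally stable.
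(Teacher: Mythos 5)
Your argument reaches the correct conclusion, but by a genuinely different route from the paper's. The paper handles the failure of left cancellation algebraically: the structurally stable element must be indecomposable, hence is a generator $R_1$, and Proposition \ref{pr.Ritt} applied to $R_1\circ R_i=R_1^2$ gives $R_i=\gamma_i\circ R_1$ with $\gamma_i\in Deck(R_1)$, and likewise $\phi(R_i)=h_i\circ\phi(R_1)$ with $h_i\in Deck(\phi(R_1))$. Since $\gamma_i$ preserves the Beltrami coefficient of the conjugating map $f$, the map $f\circ\gamma_i\circ f^{-1}$ is again M\"obius and lies in $Deck(\phi(R_1))$; then $h_i$ and $f\circ\gamma_i\circ f^{-1}$ are two elements of a finite (discrete) group which are close, hence equal. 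You instead cancel $\tilde R_{j_0}$ analytically: from $\tilde R_{j_0}\circ F=\tilde R_{j_0}\circ G$ you deduce $F=G$ because the agreement set $A=\{F=G\}$ is closed, is open off the finite set $C=F^{-1}(\mathrm{Crit}(\tilde R_{j_0}))$ (local injectivity of $\tilde R_{j_0}$ away from its critical points), and meets $\overline{\C}\setminus C$ (two distinct points of a fibre of $\tilde R_{j_0}$ at distance $O(\delta)$ must both lie near $\mathrm{Crit}(\tilde R_{j_0})$, so the disagreement set sits inside $F^{-1}$ of a small neighborhood of the critical set); connectedness of $\overline{\C}\setminus C$ plus closedness of $A$ then give $A=\overline{\C}$. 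This works, and in fact the "delicate point" you flag about sheets of the fibre product meeting over critical values is not an issue once the argument is phrased as open-and-closed in the complement of the finite exceptional set: those points are simply excised and recovered at the end by continuity. Your route is more elementary and self-contained (no Ritt factorization, no deck groups, no invariant Beltrami differential), while the paper's yields the extra structural information $R_i=\gamma_i\circ R_1$.

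One step does need repair. The claim that structural stability of $R_{j_0}^k$ implies that of $R_{j_0}$ "because a map and its iterate have the same Julia dynamics" is not a proof: structural stability of $R_{j_0}^k$ concerns perturbations in the full space of degree-$d^k$ rational maps, almost none of which are iterates, and it is not determined by the Julia dynamics alone. The correct observation (the one behind the paper's assertion that a structurally stable element is indecomposable, hence a generator) is that for $k\geq 2$ the iterate $R_{j_0}^k$ is decomposable and therefore has several distinct critical points sharing a common critical value; a quasiconformal conjugacy preserves this collision while a generic perturbation of the same degree destroys it, so no proper iterate is structurally stable. Hence the structurally stable element of $S(\mathcal{F})$ is itself one of the generators, which is all your argument requires.
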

\begin{proof}
 If $g\in S(\mathcal{F})$ is structurally stable, then $g$ is indecomposable and therefore is one of the generators, say $R_1$. If $\phi:S(\mathcal{F})\rightarrow Rat$ is a representation sufficiently closed to the identity representation, there exist a quasiconformal homeomorphism $f:\overline{\C}\rightarrow \overline{\C}$ such that $\phi(R_1)=f\circ R_1 \circ f^{-1}$. 
 
 We claim that $\phi(Q)=f\circ Q \circ f^{-1}$ for every $Q\in S(\mathcal{F}).$
 
 It is enough to check the latter equality holds for the generators $R_i$. If $\mu=\frac{\bar{\partial}f}{\partial f}$, then $\mu$ is invariant for all generators by the Levin relations. 
 
First let us assume that $R_i=\gamma_i\circ R_1\circ \gamma_i^{-1}=\gamma_i\circ R_1$ and $\phi(R_i)=h_i\circ \phi(R_1)\circ h_i^{-1}$ for suitables $h_i\in Deck(\phi(R_1))$ and  $\gamma_i\in Deck(R_1)$, respectively.  Since $\gamma_i$ 
leaves $\mu$ invariant then $Deck(\phi(R_1))=f\circ Deck(R_1)\circ f^{-1}$.  If the semigroup $T=\langle S(\mathcal{F}), Deck(R_1)\rangle$ then $\phi(S(\mathcal{F}))\subset f\circ T \circ f^{-1}\subset Rat.$

As $\phi(R_i)$ is close to $R_i$ for all $i$, it follows that $h_i$ is close to $\gamma_i$ and $\gamma_i$ is close to $f\circ \gamma_i \circ f^{-1}$, as $f$ is close to the identity too.

Therefore, $h_i$, $f\circ \gamma_i \circ f^{-1}\in Deck(\phi(R_1))$ are sufficiently close and hence coincide since $Deck(\phi(R_1))$ is discrete. In conclusion, $\phi(S(\mathcal{F}))=f\circ S(\mathcal{F}) \circ f^{-1}$ as claimed.

It remains to show that  $R_i=\gamma_i\circ R_1\circ \gamma_i^{-1}=\gamma_i\circ R_1$ and $\phi(R_i)=h_i\circ \phi(R_1)\circ h_i^{-1}$ for suitables $h_i\in Deck(\phi(R_1))$ and  $\gamma_i\in Deck(R_1)$  for every $i.$

Since $R_1\circ R_i=R_1^2$  hence $Deck(R_i)\subset Deck(R_1)$ then by Theorem \ref{th.RittEremenko} the maps $R_1$ and $R_i$ share a common right factor, that is there are rational maps $X,Y$ and $W$ such that $R_1=X\circ W$ and $R_i=Y\circ W$. But $R_1$ is indecomposable  then $Deg(X)=Deg(Y)=1$. It follows that  $X\circ Y^{-1}\in Deck(R_1)$ and $R_i=Y\circ X^{-1}\circ R_1$.  Finally, the map $\phi(R_1)$ is structurally stable  as a quasiconformal deformation of a structurally stable map, so it is also indecomposable. Now we can repeat the arguments for $\phi(R_1).$ 

\end{proof}

Therefore, a semigroup $S$ satisfying the Levin relations possesses an non-zero invariant Beltrami differential if and only if there is an element of $S$  possessing an invariant Beltrami differential. 

In what follows, for every rational map $R$ and a every completely invariant set $A\subset \overline{\C}$ of positive Lebesgue measure,  we construct a semigroup of operators satisfying the Levin relations and acting on  $L_1(A)$ and show that the action is left amenable whenever $R$ does not admits a non-zero Beltrami differential supported on $A.$

 \begin{definition} Let $R$ be a rational map.
 Let $\sigma$ be an analytic arc in $\C$ containing all critical values of $R.$ 
Let $U=\overline{\C}\setminus \sigma$ and $D=R^{-1}(U)$, then $D=\bigcup_{i=1}^{deg (R)} D_i$ and $\pi_1(D_i)=1$ and $R:D_i\rightarrow U$ is holomorphic homeomorphism.
Set $R_i=R|_{D_i}$ and for each $i,j$ define  the piecewise conformal map $$h_{i,j}=\begin{cases}
    R_j^{-1}\circ R_i,& \text{on } D_i\\
      R_i^{-1}\circ R_j,& \text{on } D_j\\
    Id,              & \text{otherwise.}
\end{cases}$$
Then $h_{i,j}$ is a  piecewise conformal almost everywhere bijection such that $h_{i,j}^2=Id$ and $h_{i,i}=Id$ everywhere.  We denote by  $D(R)$ the group  generated by the maps $h_{i,j}$ as the full deck group of $R$ associated to the arc $\sigma.$

\end{definition}

Note that $D(R)$ is isomorphic to the symmetric group on $deg(R)$ symbols. For every $\gamma\in D(R)$ we have that $R(\gamma)=R$ almost everywhere. The group $D(R)$ acts on  $L_1(\mathbb{C})$ by the push-forward map $$\gamma_*:f\rightarrow f(\gamma)\gamma'^{2}$$with $\|\gamma_*\|_{L_1}\leq 1$ for every $\gamma\in D(R)$.
 
 For every subgroup $\Gamma <D(R)$ and $\gamma\in \Gamma$, let $R_\gamma=\gamma \circ R\circ \gamma^{-1}=\gamma \circ R$. Define the semigroup 
 $$\mathcal{S} (\Gamma)=\langle R_\gamma \rangle_{\gamma\in \Gamma}.$$ Then $\mathcal{S}(\Gamma)$ is a finitely generated semigroup of piecewise holomorphic maps which is RA by Theorem \ref{th.Levinrel}. For example if $\Gamma <Deck(R)$ then $\mathcal{S}(\Gamma)$ consists of rational maps.  
 
 Using the action of $R$ on $L_1(\C)$ by the Ruelle operator $R_*$ we construct the Ruelle representation $\rho:\mathcal{S}(D(R))\rightarrow End(L_1(\overline{\C}))$  defined by the formulas on generators: $$\rho(R_\gamma)(\phi)=(R_\gamma)_*(\phi)=\gamma_*\circ R_*(\phi)$$ for $\phi\in L_1(\C).$  If $A$ is a completely invariant positive Lebesgue measure set, that is $Leb(R^{-1}(A)\setminus A)=0$, then $Leb(\gamma(A)\setminus A)=0$ for $\gamma \in D(R)$, where $Leb$ denotes the Lebesgue measure. 
 
 \begin{proposition}\label{pr.BeltDR}
  Let $R$ be a rational map and $A$ be a completely invariant set of positive Lebesgue measure.  Assume that $A$ does not support a non-zero invariant  Beltrami 
  differential of $R$, then the Ruelle representation of $S(D(R))$ on $L_1(A)$ is left amenable.
 \end{proposition}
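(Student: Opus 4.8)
The plan is to produce a mean on $X_\rho\subset L_\infty(S)$, where $S=\mathcal S(D(R))$, whose restriction to $X_\rho$ is invariant under the left action of $S$; this is exactly what left amenability of $\rho$ asks for. First I would record the algebraic shape of $S$. Since $R\circ\gamma=R$ almost everywhere for $\gamma\in D(R)$, the generators $R_\gamma=\gamma\circ R$ satisfy the Levin relations $R_\gamma\circ R_\delta=R_\gamma^{2}$, so every non-identity element of $S$ has the form $R_\gamma^{\,n}=\gamma\circ R^{\,n}$, and $\rho(R_\gamma^{\,n})=\gamma_*\circ R_*^{\,n}$; recall that $D(R)$ is finite. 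Writing $\gamma^{*}=(\gamma_*)^{*}$ for the adjoint acting on $L_\infty(A)$ and using the duality $R_*^{*}=B_R$, one gets $\rho(R_\gamma^{\,n})^{*}=B_R^{\,n}\,\gamma^{*}$.

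The key reduction is that left invariance is automatic once the mean kills $Y_\rho$. Indeed, $Y_\rho$ is left invariant and the constants are fixed by every $l_t$; expressing an element of $X_\rho$ as a norm limit of $c\,\chi_S+y$ with $y\in Y_\rho$, any mean $M$ with $M(\chi_S)=1$ and $M|_{Y_\rho}=0$ satisfies $M(l_t g)=M(g)$ for all $t\in S$. So the entire problem becomes the construction of a mean that vanishes on $Y_\rho$. For this I would fix a Banach limit $\operatorname{LIM}$ and set
\[ M(g)=\operatorname{LIM}_N \frac{1}{|D(R)|}\sum_{\gamma\in D(R)}\frac{1}{N}\sum_{n=0}^{N-1} g\big(R_\gamma^{\,n}\big). \]
Averaging non-negative values and taking a Banach limit yields a positive functional with $M(\chi_S)=1$, i.e.\ a mean. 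On a generator $f_{(\phi,\psi)}$ of $Y_\rho$ (with $\phi\in L_1(A)$, $\psi\in L_\infty(A)$) it evaluates to
\[ M\big(f_{(\phi,\psi)}\big)=\operatorname{LIM}_N \frac{1}{|D(R)|}\sum_{\gamma\in D(R)}\Big\langle \phi,\ \frac{1}{N}\sum_{n=0}^{N-1}B_R^{\,n}(\gamma^{*}\psi)\Big\rangle, \]
so it suffices to show that $\frac1N\sum_{n<N}R_*^{\,n}\phi\to 0$ in $L_1(A)$ for every $\phi$, i.e.\ that $R_*$ is mean ergodic with vanishing ergodic projection.

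Here the hypothesis enters. By the separation principle, $R_*$ is mean ergodic once the fixed-point separation holds for $R_*^{*}=B_R$; but the fixed points of $B_R$ in $L_\infty(A)$ are precisely the invariant Beltrami differentials of $R$ supported on $A$, which vanish by assumption. Thus $B_R$ has no non-zero fixed point, the separation condition holds trivially, and $R_*$ is mean ergodic. Let $E=\lim_N\frac1N\sum_{n<N}R_*^{\,n}$ be its strong ergodic projection onto $Fix(R_*)$. Since $R_*E=E$ gives $B_RE^{*}=E^{*}$, we have $E^{*}\chi\in Fix(B_R)=\{0\}$ for every $\chi\in L_\infty(A)$, whence $E=0$ and $\frac1N\sum_{n<N}R_*^{\,n}\phi\to 0$ in norm. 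Therefore $M(f_{(\phi,\psi)})=0$, by continuity $M|_{Y_\rho}=0$, and $M$ is the desired left invariant mean, so $\rho$ is left amenable.

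The hard part will be the rigorous justification of mean ergodicity of $R_*$ in the non-reflexive space $L_1(A)$: the separation principle supplies the fixed-point half, but one must also secure relative weak compactness (uniform integrability) of the orbits $\{R_*^{\,n}\phi\}$, and $R_*$ is not positive. I would obtain this from the pointwise domination $|R_*^{\,n}\phi|\le P^{\,n}|\phi|$ by the positive transfer operator $P\phi(y)=\sum_{R(x)=y}\phi(x)/|R'(x)|^{2}$, which is a positive contraction of $L_1(A)$. By Theorem \ref{th.Lin} the weak almost periodicity of $P$ is equivalent to its mean ergodicity, and the resulting uniform integrability of $\{P^{\,n}|\phi|\}$ passes, through the Dunford--Pettis criterion and the domination, to $\{R_*^{\,n}\phi\}$; this upgrades the (vacuous) separation into genuine norm convergence of the Cesàro averages. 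Everything else---positivity and normalization of $M$, the automatic left invariance, and the identification of $Fix(B_R)$ with invariant Beltrami differentials---is routine given the results already established.
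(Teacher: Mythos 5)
Your proof is correct, and it reaches the conclusion by a genuinely different route for the key step. Both you and the paper rest on the same reduction (a mean that is normalized and annihilates $Y_\rho$ is automatically left invariant on $X_\rho$, since $Y_\rho$ is left invariant and constants are fixed) and on the same duality $R_*^*=B_R$; the difference is in how the annihilating mean is produced. The paper takes the soft route: $\mathcal S(D(R))$ is right amenable by Theorem \ref{th.Levinrel}, one takes \emph{any} right-invariant mean $m$, and if $m$ did not kill $Y_\rho$ then $f\mapsto m(\phi_{f,\nu})$ would be a nonzero $\rho(S)$-invariant functional on $L_1(A)$, hence by Riesz a nonzero invariant Beltrami differential on $A$ --- contradiction. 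You instead build the mean explicitly as a Banach limit of Ces\`aro/group averages and kill $Y_\rho$ by showing $\frac1N\sum_{n<N}R_*^n\phi\to 0$ in $L_1(A)$, deduced from $Fix(B_R)\cap L_\infty(A)=\{0\}$. Your approach is more constructive and yields the extra information that the Ces\`aro averages of $R_*$ converge to zero in norm; the paper's is shorter and reuses its amenability machinery. One remark: the ``hard part'' you flag at the end is not actually hard, and the domination by the positive operator $P$ together with Theorem \ref{th.Lin} is unnecessary. Since $\|R_*\|\le 1$, the operators $A_N(R_*)$ are uniformly bounded; the annihilator of $(I-R_*)L_1(A)$ is $Fix(B_R)=\{0\}$, so $(I-R_*)L_1(A)$ is dense, and on it $A_N(R_*)\bigl((I-R_*)\phi\bigr)=\frac1N(\phi-R_*^N\phi)\to 0$ in norm; density plus uniform boundedness then give $A_N(R_*)\to 0$ strongly on all of $L_1(A)$ with no weak compactness input. (Also, start your inner sum at $n=1$ rather than $n=0$, since the identity need not belong to $\mathcal S(D(R))$; this changes nothing.)
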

 
 \begin{proof}
  The semigroup $S(D(R))$ is RA by Theorem \ref{th.Levinrel}, then  the space  $X_{\rho}\subset L_\infty(S(D(R)))$ possesses a right-invariant mean $m$. Recall that $X_{\rho}$ is the closure of the linear span of constant functions  together with the space $Y_{\rho}$. 
  
  We claim that $ker(m)$ contains $Y_{\rho}.$ 
  
  Otherwise, there are two elements $\psi\in L_1(A)$ and $\nu\in L_\infty(A)$ so that $m(\phi_{\psi,\nu})\neq 0.$ Then 
  $$M(f)=m(\phi_{ f,\nu})$$ is a continuous $R_*$-invariant functional on $L_1(A).$  But $M(\psi)=m(\phi_{\psi,\nu})\neq 0$ then by the Riesz representation theorem there exists an invariant Beltrami differential $\mu\neq 0$ which is a contradiction.

  Since $X_{\rho}$ and $Y_{\rho}$ are both left-invariant then by the claim every right mean on $X_{\rho}$ is  left invariant.
  
 \end{proof}

 Conversely we have the following theorem. 
 
 \begin{theorem}\label{th.fixed}
  Let $R$ be a rational map and $\Gamma<D(R)$ be a transitive subgroup. Assume that $\mathcal{S}(\Gamma)$ is $\rho$-LA, where $\rho$ is the Ruelle representation. Then the following conditions are equivalent.

  \begin{enumerate}
   \item $R_*$ has non-zero fixed points in $L_1(\C).$
   \item $R$ is M\"obius conjugated to a flexible Latt\'es map.
  \end{enumerate}

 \end{theorem}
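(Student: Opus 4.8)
The plan is to establish the equivalence through the structure imposed by $\rho$-left-amenability of $\mathcal{S}(\Gamma)$ on the Ruelle operator $R_*$. The implication $(2)\Rightarrow(1)$ is the standard direction: a flexible Lattès map admits a non-trivial invariant Beltrami differential (this is precisely what "flexible" encodes), so by the duality between the Beltrami operator $B_R$ and the Ruelle operator $R_*$ stated in the excerpt, the fixed space $Fix(B_R)$ is non-zero, and hence by duality $R_*$ has a non-zero fixed point in $L_1(\overline{\C})$. So the real content is the forward implication $(1)\Rightarrow(2)$.

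For $(1)\Rightarrow(2)$, the first step is to exploit $\rho$-left-amenability together with the transitivity of $\Gamma$ to control the structure of $Fix(R_*)$. The key observation is that $Fix(R_*)$ corresponds, by duality with $B_R$, to invariant Beltrami differentials of $R$. I would first show that $\rho$-left-amenability forces the fixed space to be finite-dimensional and, more importantly, that the mean-ergodic machinery applies: since $R_*$ is a positive operator with $\|R_*\|\leq 1$ on $L_1$, by the Kornfeld--Lin theorem (Theorem \ref{th.Lin}) it suffices to check weak almost periodicity, and left-amenability of the $\rho$-action should furnish the requisite weak precompactness of orbits $\{(R_*)^n(f)\}$. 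The separation principle for mean-ergodic operators then pairs fixed points of $R_*$ with fixed points of the dual $B_R$, guaranteeing that a non-zero fixed point $\phi\in L_1$ produces a genuine non-zero invariant Beltrami differential $\mu\in Fix(B_R)$ with $\langle\mu,\phi\rangle\neq 0$.

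The geometric heart of the argument is then to convert the existence of an invariant Beltrami differential supported on all of $\overline{\C}$ into the conclusion that $R$ is a flexible Lattès map. Here I would use that the transitive subgroup $\Gamma<D(R)$ together with the Levin-type relations satisfied by $\mathcal{S}(\Gamma)$ (as in Proposition \ref{pr.collstruc}) pins down the support: the invariant differential cannot be supported on a proper completely invariant subset, since by the Remark following Proposition \ref{pr.collstruc} a proper support would force all generators to share a common Julia set in a way incompatible with transitivity of $\Gamma$ acting over the whole sphere. Thus $\overline{supp(\mu)}=\overline{\C}$, meaning $R$ admits an invariant line field (or Beltrami differential) on a set of full measure. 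By the classification of rational maps carrying an invariant line field on the whole sphere — the only rational maps admitting a measurable invariant Beltrami differential of full support are the flexible Lattès maps — we conclude that $R$ is Möbius conjugate to a flexible Lattès map.

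The main obstacle I anticipate is the step establishing that the invariant differential obtained from the mean-ergodic pairing has full support rather than being concentrated on an exceptional invariant set; this is where the transitivity hypothesis on $\Gamma$ must be used decisively, and one must rule out that $R_*$-fixed points arise from atomic or lower-dimensional invariant structures. The hard part is making precise how the $\rho$-LA condition on the semidirect-type semigroup $\mathcal{S}(\Gamma)$ propagates to a full-support conclusion for the single operator $R_*$, rather than merely yielding an invariant functional on a proper subspace of $L_1(A)$ as in Proposition \ref{pr.BeltDR}.
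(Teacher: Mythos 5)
Your proposal has two genuine gaps, one in each direction. For $(2)\Rightarrow(1)$: duality between $B_R$ and $R_*$ does not let you pass from a non-zero fixed point of $B_R$ (an invariant Beltrami differential) to a non-zero fixed point of $R_*$ --- an invariant functional on $L_1$ never guarantees an invariant vector in $L_1$ (indeed, any hyperbolic map with an invariant line field on its Fatou set has $Fix(B_R)\neq 0$ while, by the very theorem being proved, $Fix(R_*)=0$ unless $R$ is flexible Latt\`es). The paper instead constructs the fixed point explicitly: by Zdunik's theorem the measure of maximal entropy of a flexible Latt\`es map is absolutely continuous, $dm_R=\omega\,|dz|^2$ with $\omega(R)|R'|^2/\deg(R)=\omega$, and then $\overline{\mu}\,\omega$ is fixed by $R_*$, where $\mu$ is the invariant Beltrami differential.

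For $(1)\Rightarrow(2)$, your closing step invokes ``the classification of rational maps carrying an invariant line field on the whole sphere'' as if only flexible Latt\`es maps admit a measurable invariant Beltrami differential of full support. That statement is McMullen's no-invariant-line-fields conjecture, which is open; if it were available the theorem would need almost none of its hypotheses. The paper's actual route avoids it entirely: from the fixed point $f$ it gets $\mu=|f|/f$ on $supp(f)$ (Lemma A of \cite{MakRuelle}), uses the separation principle to show $R_*$ is mean-ergodic on $L_1(supp(\mu))$ and then a conjugation to a positive operator plus Kornfeld--Lin to get weak almost periodicity (note your proposed order is reversed: amenability does not supply the weak precompactness, mean-ergodicity does). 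The $\rho$-LA mean is then used to build a left-invariant functional $\ell(h)=L(\phi_{h,f})$, and weak almost periodicity shows the associated finitely additive measure is countably additive and absolutely continuous, yielding a density $w$ proportional to $f$. The decisive use of transitivity of $\Gamma$ is to sum $w$ over a complete set of deck branches and obtain $w(R(z))R'(z)^2/\deg(R)=w(z)$, so that $|w|$ is the density of an $L_R$-invariant measure, i.e.\ the measure of maximal entropy is absolutely continuous with respect to Lebesgue; Zdunik's theorem (a theorem, not a conjecture) then forces $R$ to be exceptional, and the presence of the invariant Beltrami differential singles out the flexible Latt\`es case. Your plan never produces the absolute continuity of the maximal entropy measure, which is the step that makes the argument close.
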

 
 \begin{proof}
  (1) implies (2). Assume that $R_*$ has a non-zero fixed point $f\in L_1(\C).$ Then by Lemma A in \cite{MakRuelle}, there exists an invariant Beltrami differential $\mu$ with $\mu=\frac{|f|}{f}$ almost everywhere on the support of $f.$  We can assume  that $R$ acts ergodically on the support of $\mu$.  Then the representation $R_*:L_1(supp(\mu))\rightarrow L_1(supp(\mu))$ has fixed point $\alpha\neq 0$ if and only if $\alpha$ is a multiple of $f$. Even more, the Beltrami operator $(R_*)^*:L_\infty(supp (\mu)) \rightarrow L_\infty( supp (\mu))$ has a fixed point $\beta\neq 0$ if and only if $\beta$ is a multiple of $\mu.$ 
  Then by the separation principle, we conclude that $R_*:L_1(supp (\mu))\rightarrow L_1(supp (\mu))$ is mean-ergodic. 
 
 Even more $R_*$ is weakly almost periodic. Indeed since $R_*$ is mean-ergodic then the conjugated operator $T(\phi)=\mu R_*(\overline{\mu}\phi)$ is also a mean-ergodic operator with the same norm. A straightforward computation shows

 $$T(\phi)(y)=\sum_{R(x)=y} \frac{\phi(x)}{|R'(x)|}=\sum \phi(\zeta_i(y))|\zeta'_i|^2(y)$$ is a positive operator which is almost weakly periodic by Theorem \ref{th.Lin}, where $\zeta_i$ is a complete local system of branches of $R^{-1}$. So  $R_*$ is weakly almost periodic on $L_1(supp(\mu))$. 
 
The semigroup $\mathcal{S}(\Gamma)$  consists only of iterations of the generators and every generator is conjugated to $R$. Hence $\rho(\mathcal{S}(\Gamma))$ also consists only of iteration of the generators $\rho(R_\gamma)$ and each $\rho(R_\gamma)$ is conjugated to $\rho(R)$ where $\rho$ is the Ruelle representation. This implies that $\rho(\mathcal{S}(\Gamma ))$ is a weakly almost periodic semigroup  of operators on $L_1(supp( \mu)).$
 
 Since $\mathcal{S}(\Gamma)$ is $\rho$-LA, we claim that there exist a functional $\ell\in L^*_\infty(supp (\mu))$ which is invariant for the semigroup $(\rho(\mathcal{S}(\Gamma)))^*=\{t^*: t\in \rho(\mathcal{S}(\Gamma))\}$. 
 
 Indeed if $L$
 is a mean we define the functional $$\ell(h)=L(\phi_{h,f})$$
for  $\phi_{h,f}\in L_\infty(\mathcal{S}(\Gamma))$ given by  $$\phi_{h,f}(g)=\int h \rho(g)(f)|dz|^2$$ where $g\in \mathcal{S}(\Gamma)$, $h\in L_\infty(supp (\mu))$ and $f\in L_1(supp(\mu))$. Since $L$ is left invariant we get $\ell(t^*(h))=\ell(h)$ for every $t\in \rho(\mathcal{S}(\Gamma))$. 

Now we continue the proof of the theorem by standard arguments of functional analysis (see for example \cite{DunfordSchwartz}). The functional $\ell$ generates a finite complex valued invariant finitely additive measure $\alpha_\ell$
defined by the formula
$$\alpha_{\ell}(A)=\ell(\chi_A)$$ where $A$ is a measurable subset of $supp(\mu).$
From the definition follows that $\alpha_\ell$ is null on every zero Lebesgue measure subset of $supp(\mu).$ Next we
show that $\alpha_\ell$ is a measure absolutely continuous with respect to Lebesgue. It is enough to show that $\alpha_\ell$ is a countably additive set function. That is $$\alpha_\ell(\bigcup A_i)= \sum\alpha_\ell(A_i)$$ for every pairwise disjoint family of measurable subsets of $supp(\mu).$

Since $\rho(\mathcal{S}(\Gamma))$ is weakly almost periodic then for every $\epsilon>0$ and every $\beta\in L_1(supp(\mu))$  there exists a $\delta>0$ such that 
$$\int_B |t(\beta)|\leq \epsilon$$ for every $t\in \rho(\mathcal{S}(\Gamma))$ whenever the 
Lebesgue measure of $B$ is less than $\delta.$

Let $X\subset supp(\mu)$ a finite Lebesgue measure set which has a decomposition $X=\bigcup_{i=0}^ \infty  A_i$ by a family of pairwise disjoint measurable subsets. Then for every $k$ we have 
$$\alpha_\ell(X)=(\sum_{i=0}^k \alpha_\ell(A_i))+\alpha_\ell(\bigcup_{i=k+1}^\infty A_i))$$ by finite additivity.

Since $\rho(S(\Gamma))(f)$ is a weakly precompact set, for every  $\epsilon>0$  we get a  $\delta>0$ so that if  $k_0$ is such that $Leb(X_k)<\delta$ for $k>k_0$,  where $X_k=\bigcup_{i=k+1}^\infty A_i$, then 

$$|\alpha_\ell(X_k)|\leq |L(\phi_{\chi_{X_k},f})|\leq \sup_{g\in \mathcal{S}(\Gamma)} \int_{X_k}|\rho(g)(f)||dz|^2\leq \epsilon.$$

Then $\alpha_\ell$ is a finite measure which is absolutely continuous with respect to the Lebesgue measure on $supp(\mu)$. Hence, there exists a non-zero $w\in L_1(supp(\mu))$ so that $\ell(h)=\int hw |dz|^2$.
Since $\ell$ is $\rho(\mathcal{S}(\Gamma))^*$ invariant, then 
$w$ is $\rho(\mathcal{S}(\Gamma))$ invariant and therefore $w$ is a multiple of $f.$

As $R_*(f)=f$, we conclude that $f$ is a fixed point for $\gamma_*$, with $\gamma\in \Gamma.$ Since $\Gamma$ is transitive, we can choose $d=deg(R)$ elements $\gamma_1,...,\gamma_d\in \Gamma$ so that for every fixed  branch $\zeta_i$ of $R^{-1}$ on $\overline{\C}\setminus \sigma$ we have that the collection $\{\gamma_j\circ \zeta_i\}$ forms a complete collection
of branches of $R^{-1}$ on $\overline{\C}\setminus \sigma.$  
Therefore, $$dw(\zeta_i)(\zeta'_i)^2=\sum_{j} (\gamma_j)_*(w)\circ (\zeta_i)(\zeta'_i)^2=\sum w(\zeta_j)(\zeta'_j)^2=w.$$ Then for every $z\in R^{-1}(\overline{\C}\setminus \sigma)$ we have 
$$\frac{w(R(z))R'^2(z)}{deg(R)}=w(z).$$

Hence $|w|$ defines a continuous functional on $C(\overline{\C})$, via $\phi \mapsto \int_\C \phi |w| |dz|^2$, which is invariant under Lyubich operator $L_R$  and so by Theorem \ref{th.Lyubich}  and Lemma \ref{lm.atomic} it is  the density of the measure of maximal entropy for $R.$ Thus, the map $R$ has maximal entropy measure absolutely continuous with respect to Lebesgue. By Zdunik's Theorem (see \cite{ZdunikParab}) the map $R$ is an exceptional map. Since $\frac{|f|}{f}$ is an invariant Beltrami differential for $R$, then $R$ is a  flexible Latt\`es map. 

Now (2) implies (1). If $R$ is a flexible Latt\`es map, then again by Zdunik's Theorem, the measure $m_R$ of maximal entropy of $R$ is absolutely continuous with respect to Lebesgue. Then $dm_R=\omega|dz|^2$ with $\omega\in L_1(\overline{\C})$, $\omega>0$  and $$\frac{\omega(R)|R'|^2}{deg(R)}=\omega$$ almost everywhere. On the other hand, $R$ has non-zero invariant Beltrami differential $\mu$, hence the function $\overline{\mu}\omega$ is fixed by $R_*$, and we are done. 
\end{proof}

  \bibliographystyle{amsplain}
\bibliography{workbib}

\Addresses
\end{document}